\newcommand{\be}{\begin{eqnarray}}
\newcommand{\ee}{\end{eqnarray}}
\newcommand{\ben}{\begin{eqnarray*}}
\newcommand{\een}{\end{eqnarray*}}
\newcommand{\dis}{\displaystyle}
\newcommand{\beq}{\begin{equation}}
\newcommand{\noi}{\noindent}
\newcommand{\eeq}{\end{equation}}
\newcommand{\R}{{\mathbb R}}
\newcommand{\N}{{\mathbb N}}
\newcommand{\vs}{{\vskip .2cm}}
\newcommand{\sep}{; }
\newtheorem{thm}{Theorem}[section]
\newtheorem{coro}[thm]{Corollary}
\newtheorem{prop}[thm]{Proposition}
\newtheorem{lemma}[thm]{Lemma}
\renewcommand{\subjclass}[1]{\thanks{2010 \emph{Mathematics Subject Classification.} #1}}
\title[Nodal solutions and $p$-Laplacian]{Qualitative properties and existence of sign changing solutions with compact support for an equation with a $p$-Laplace operator}
\author[{Jean Dolbeault, Marta Garc\'ia-Huidobro and Raul Man\'asevich}]{}
\subjclass{
34C10\sep 35B05\sep 37B55.
}
\keywords{$p$-Laplace operator\sep Nodal solutions\sep Nodes\sep Shooting method\sep Compact support principle\sep Hamiltonian systems\sep Energy methods}
\thanks{
${}^1$ Partially supported by ECOS-Conicyt projects no. C11E07, and by the CMM (UMR CNRS no. 2071), Universidad de Chile. \\ \indent
${}^2$ Partially supported by FONDECYT GRANT 1110268.\\ \indent
${}^3$ Partially supported by Basal-CMM-Conicyt, Milenio grant-P05-004F, and FONDECYT GRANT {1110268}.}
\email{J.~Dolbeault: dolbeaul@ceremade.dauphine.fr,\newline M.~Garc\'ia-Huidobro: mgarcia@mat.puc.cl, R.~Man\'asevich: manasevi@dim.uchile.cl}
\begin{document}
\maketitle
\thispagestyle{empty}
\centerline{\scshape Jean Dolbeault$\,{}^1$}\vskip 0.02cm
{\footnotesize
\centerline{Ceremade (UMR CNRS no. 7534), Universit\'e Paris-Dauphine,}
\centerline{Place de Lattre de Tassigny, 75775 Paris C\'edex~16, France}
}\vskip 0.3cm
\centerline{\scshape Marta Garc\'ia-Huidobro$\,{}^2$}\vskip 0.02cm
{\footnotesize
\centerline{Facultad de Matem\'aticas}
\centerline{Pontificia Universidad Cat\'olica de Chile, Casilla 306 Correo 22, Santiago, Chile}
}\vskip 0.3cm
\centerline{\scshape R\'aul Man\'asevich$\,{}^3$}\vskip 0.02cm
{\footnotesize
\centerline{DIM \& CMM (UMR CNRS no. 2071), FCFM,}
\centerline{Universidad de Chile, Casilla 170 Correo 3, Santiago, Chile}
\vskip 0.5cm
\centerline{\bf This paper is dedicated to Klaus Schmitt}
\centerline{\today}
}\vspace*{0.5cm}

\begin{quote}{\normalfont\fontsize{10}{12}\selectfont{\bfseries Abstract.} We consider radial solutions of an elliptic equation involving the $p$-Laplace operator and prove by a shooting method the existence of compactly supported solutions with any prescribed number of nodes. The method is based on a change of variables in the phase plane corresponding to an asymptotic Hamiltonian system and provides qualitative properties of the solutions.\par}\end{quote}

\section{introduction}\label{Section:Intro}

In this paper we shall consider classical radial sign-changing solutions of
\beq\label{eq1}
\Delta_pu+f(u)=0
\eeq
on $\R^N$ with $p>1.$ Radial solutions to~(\ref{eq1}) satisfy the problem
\beq\label{eqq2}
\left(r^{N-1}\,\phi_p(u')\right)'+r^{N-1}\,f(u)=0\;,\quad u'(0)=0\;.
\eeq
Here, for any $s\in\R\setminus\{0\}$, $\phi_p(s):=|s|^{p-2}\,s$ and $\phi_p(0)=0.$ Also $\phantom{}'$ denotes the derivative with respect to $r=|x|\geq 0$, $x\in\R^N$ and for radial functions as it is usual we shall write $u(x)=u(r)$. We will assume henceforth that $N>p.$ By a (classical) solution of~(\ref{eqq2}), we mean a function $u$ in~$C^1([0,\infty))$ such that $u'(0)=0$ and $|u'|^{p-2}\,u'$ is in~$C^1(0,\infty)$.
\vs
It is well known that equations involving quasilinear operators ($p$-Laplace, mean curvature) may have positive solutions with compact support, see for example \cite{Conti-Gazzola}, \cite{Gazzola-Serrin-Tang}, and \cite{GarciaHuidobro-Manasevich-Serrin-Tang-Yarur01}. We are interested here in qualitative properties of the solutions to problem~(\ref{eqq2}) that have a prescribed number of zeros. They satisfy the problem
\begin{equation}\label{eq2}
\begin{gathered}
\left(r^{N-1}\,\phi_p(u')\right)'+r^{N-1}\,f(u)=0\;,\quad r>0\;,\\
u'(0)=0\;,\quad \lim\limits_{r\to\infty}u(r)=0\;.
\end{gathered}
\end{equation}
As we shall see in Section~\ref{Section:CSP}, under condition (H3) below, such solutions have compact support.
\vs
We assume the following conditions on $f$.
\vs
\begin{itemize}
\item [(H1)] $f$ is continuous on $\mathbb R$, locally Lipschitz on $\mathbb R\setminus\{0\}$, with $f(0)=0$.
\vs
\item [(H2)] There exist two constants $a>0$ and $b<0$ such that $f$ is strictly decreasing on $(b,a)$, and $a$ (resp. $b$) is a local minimum (resp. local maximum) of $f$.
\vs
\item [(H3)] The function $F(u):=\int_0^uf(s)\,ds$ is such that $u\mapsto |F(u)|^{-1/p}$ is locally integrable near $0$. More generally, we will assume that the function $u\mapsto |F(x_0)-F(u)|^{-1/p}$ is locally integrable near $x_0\neq 0$ whenever $x_0$ is a local maximum of $F$.
\vs
\item [(H4)] For any $u_0$ such that $f(u_0)=0$, $F(u_0)<0$.
\vs
\item [(H5)] The function $u\mapsto f(u)$ is nondecreasing for large values of $u$ and
satisfies
\[
\liminf\limits_{|u|\to\infty}\frac{f(u)}{|u|^{p-2}\,u}=\infty\;.
\]
\item [(H6)] For some $\theta\in(0,1)$, we have
\[
\liminf_{|x|\to\infty}\frac{F(\theta\,x)}{x\,f(x)}>\frac{N-p}{Np}>0\;.
\]
\end{itemize}
\vs
By the last two conditions, our problem is ($p$)-superlinear and subcritical. As a consequence of the previous assumptions, there exist two constants $B<0<A$ such that
\begin{itemize}
\item[$(i)$] $F(s)<0$ for all $s\in(B,A)\setminus\{0\}$, $F(B)=F(A)=0$ and $f(s)>0$ for all $s>A$ and $f(s)<0$ for all $s<B$,\vs
\item[$(ii)$] $F$ is strictly increasing in $(A,\infty)$ and strictly decreasing in $(-\infty,B)$,\vs
\item[$(iii)$] $F(s)$ is bounded below by $-\,\bar F=\min_{s\in[B,A]}F(s)$ for some $\bar F>0$,\vs
\item[$(iv)$] $\lim\limits_{|s|\to\infty}F(s)=\infty$.
\end{itemize}\vs
This paper is organized as follows. Our approach is based on a shooting method and a change of variables which is convenient to count the number of nodes. In Section~\ref{Section:CSP} we state and prove a version of the compact support principle for sign changing solutions. In Section~\ref{Section:Properties}, we consider the initial value problem~(\ref{ivp}), and establish some qualitative properties of the solutions. Most of these properties are interesting by themselves: see for instance Theorem~\ref{limexis}. Section~\ref{Section:Coordinates} is devoted to \emph{generalized polar coordinates} that allows us to write the initial value problem~(\ref{ivp}) as a suitable system of equations, see~(\ref{Syst:rho-theta}), that describes the evolution on the \emph{phase space} for the asymptotic Hamiltonian system corresponding to the limiting regime as $r\to\infty$. From this system we can estimate the number of rotations of solutions around the origin, in the phase space at high levels of the energy and relate it with the number of sign changes of the solution of~\eqref{eq2}. In Section~\ref{Section:Existence} we state and prove our existence results that essentially says that for any $k\in \N$ there is a solution to~(\ref{eq2}) with $k$ nodes that has compact support. This result differs from \cite{Balabane-Dolbeault-Ounaies01} in the sense that it holds for the $p$-Laplace operator for any $p>1$ and the nonlinearity $f$ is an arbitrary superlinear and subcritical function satisfying assumptions $(\mathrm H1)-(\mathrm H6)$. It also differs from the recent results of \cite{Cortazar-GarciaHuidobro-Yarur} in the sense that the change of coordinates of Section~\ref{Section:Coordinates} gives a detailed qualitative description of the dependence of the solutions in the shooting parameter $\lambda=u(0)$. When $\lambda$ varies, the number of nodes changes of at most one and we can estimate the size of the support of compactly supported solutions: see Section~\ref{Section:Qualitative} for more details and precise statements. Finally we state two already known results in the Appendix, for completeness. The first one deals with existence of solutions to the initial value problem~(\ref{ivp}) on $[0,\infty)$. The second one shows where uniqueness of the flow defined by \eqref{Syst:rho-theta} holds on the phase space; for a proof we refer to~\cite{Cortazar-GarciaHuidobro-Yarur}.

\medskip

The case $p=2$ has been studied in \cite{Balabane-Dolbeault-Ounaies01} for a special nonlinearity. Assumption~$(\mathrm H3)$ is the sharp condition for the existence of solutions with compact support; see \cite{Pucci-Serrin-Zou99}. If $u\mapsto|F(u)|^{-1/p}$ is not locally integrable, then Hopf's lemma holds according to \cite{Vazquez84}, and there is no solution with compact support. How to adapt the known results on the \emph{compact support principle} to solutions that change sign is relatively easy by extending the results of \cite{Serrin-Zou99}. See \cite{Benilan-Brezis-Crandall75,Cortazar-Elgueta-Felmer96,Balabane-Dolbeault-Ounaies01} in case $p=2$ and \cite{Vazquez84,Serrin-Zou99,Pucci-Serrin-Zou99,Pucci-Serrin00,Felmer-Quaas01,GarciaHuidobro-Manasevich-Serrin-Tang-Yarur01} in the general case.

We shall refer to~\cite{GMZ} and to~\cite{Cortazar-GarciaHuidobro-Yarur} respectively for multiplicity and existence results; earlier references can be found in these two papers. Consequences of a possible asymmetry of $F$ are not detailed here: see, \emph{e.g.,} \cite{Fabry-Manasevich02} for such questions. There is a huge literature on sign changing solutions and we can quote~\cite{MR2825334,MR2527543,MR2833356,MR2496356,MR2663307,MR2381665,MR2122573,MR2288443,MR2036055,MR2452116,MR2282828} for results in this direction, which are based either on shooting methods or on bifurcation theory but do not take advantage of the representation of the equation in the generalized polar coordinates.

Our main tool in this paper is indeed the change of variables of Section~\ref{Section:Coordinates}, which can be seen as the canonical change of coordinates corresponding either to \hbox{$N=1$} and $f(u)=|u|^{p-2}\,u$, or to the asymptotic Hamiltonian system in the limit $r\to +\infty$: see \cite{Fabry-Manasevich02,Drabek-Girg-Manasevich01} for earlier contributions.

\medskip We end this introduction with a piece of notation and some definitions. We shall denote by $F$ and $\Phi_p$ the primitives of $f$ and $\phi_p$ respectively, such that \hbox{$F(0)=\Phi_p(0)=0$}. Thus for any~\hbox{$u\in\R$},
\[
F(u)=\int_0^uf(s)\;ds\quad\text{and}\quad\Phi_p(u)=\frac 1p\,|u|^p\;.
\]
We shall say that the function $u$ has a \emph{double zero} at a point $r_0$ if $u(r_0)=0$ and $u'(r_0)=0$ simultaneously. We call \emph{nodes} of a solution
the zeros which are contained in the interior of the support of the solution and where the solution changes sign: for instance, a solution with zero node is a nonnegative solution, eventually with compact support.

\section{Compact support principle}\label{Section:CSP}

The following result is an extension to sign changing solutions of the \emph{compact support principle,} which is usually stated only for nonnegative solutions. See for instance \cite{Cortazar-Elgueta-Felmer96, Pucci-Serrin-Zou99}. Our result shows a compact support property of all solutions converging to~$0$ at infinity, without sign condition and generalizes a result for the case $p=2$ that can be found in \cite{Balabane-Dolbeault-Ounaies01}.
\vs
\begin{lemma}\label{compact_support} Assume that $f$ satisfies assumptions $(\mathrm H1)$, $(\mathrm H2)$ and $(\mathrm H3)$. Then any bounded solution $u$ of~(\ref{eq2}) has compact support. \end{lemma}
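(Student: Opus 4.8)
The plan is to show that a bounded solution $u$ of~(\ref{eq2}) must vanish identically outside some compact set, by combining an energy (or comparison) argument with the integrability condition~(H3). The key observation is that since $\lim_{r\to\infty}u(r)=0$ and $f(0)=0$ with $f$ strictly decreasing through $0$ on $(b,a)$, the nonlinearity acts like an absorption term for small $u$: for $u$ small and positive, $f(u)<0$, and for $u$ small and negative, $f(u)>0$. Thus $u\,f(u)<0$ near the origin, which is precisely the sign structure that drives a solution to zero in finite ``time'' $r$ rather than letting it decay only asymptotically.

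The main step is to set up the energy function along the solution. Multiplying the equation in~(\ref{eq2}) by $u'$ and integrating, one is led to study the Hamiltonian-type quantity
\[
E(r):=\frac{1}{p'}\,|u'(r)|^p+F(u(r))\;,
\]
whose derivative satisfies $E'(r)=-\,\frac{N-1}{r}\,|u'(r)|^p\le 0$, so $E$ is nonincreasing. Since $u(r)\to0$ and $u$ is bounded with $|u'|^{p-2}u'\in C^1$, one first argues that $u'(r)\to0$ as well, hence $E(r)\to F(0)=0$; being nonincreasing and tending to $0$, we get $E(r)\ge 0$ for all $r$. The plan is then to obtain a differential inequality of the form $\frac{d}{dr}\bigl(|u|\bigr)\le -\,c\,|F(u)|^{1/p}$ (up to constants coming from the relation between $|u'|^p$ and $F(u)$ provided by $E\ge0$ together with the sign of $F$ near $0$ guaranteed by property~$(i)$, namely $F(s)<0$ for $s\in(B,A)\setminus\{0\}$). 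Integrating this inequality and invoking the local integrability of $|F(u)|^{-1/p}$ near $0$ from~(H3) forces $u$ to reach the value $0$ at some finite $r_0$, and the same mechanism, applied again, keeps it at $0$ thereafter; this yields a double zero at $r_0$ and compact support.

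I would handle the sign changes by working on each maximal interval where $u$ has constant sign: on such an interval the comparison with the one-dimensional first-order equation $|u'|^p = p'\,|F(u)|$ is cleanest, and the endpoints are either interior zeros (nodes) or the point at infinity. The essential reduction is that once $|u(r)|<\min(A,-B)$ for all large $r$ — which holds because $u(r)\to0$ — the solution stays in the absorption regime $(B,A)$ where $F<0$, so the finite-time extinction argument applies on the unbounded tail.

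The hard part will be making the differential inequality rigorous at the level where $|u'|$ may vanish (so that $\phi_p(u')$ is only $C^1$ and the phase-plane trajectory can approach the $u$-axis), and in ruling out that $u$ oscillates infinitely often with decreasing amplitude while never actually reaching a double zero; this is exactly where the sharp integrability hypothesis~(H3), rather than mere positivity of the absorption, is indispensable. One must verify that the bound $E\ge0$ combined with $F(u)\le0$ genuinely gives a lower bound on $|u'|$ in terms of $|F(u)|^{1/p}$ along the relevant monotone pieces, and that the constant $\tfrac{N-1}{r}$ damping does not destroy the estimate for large $r$ — which it does not, since it only makes $E$ decrease faster and thus strengthens the inequality $\tfrac1{p'}|u'|^p\ge -F(u)=|F(u)|$.
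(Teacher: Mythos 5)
Your plan is correct in outline, but it is a genuinely different proof from the paper's. The paper never uses the energy: it constructs an explicit compactly supported barrier $\bar u$, defined implicitly by $r=\int_{\bar u(r)}^a\left[p'\,(-F(s))\right]^{-1/p}ds$ (a finite total length $\mathcal A$ precisely by $(\mathrm H3)$), which solves the one-dimensional equation $(\phi_p(\bar u'))'+f(\bar u)=0$ and vanishes beyond $\mathcal A$; it then proves $u(r)\le\bar u(r-R)$ on the tail by a maximum-point comparison argument whose key ingredient is the strict monotonicity of $f$ on $(b,a)$ from $(\mathrm H2)$ (at a positive maximum of $u-w$ one gets $f(u)<f(w)$, forcing $u'>w'$ afterwards, a contradiction), and repeats the argument for $-u$ with $\tilde f(s)=-f(-s)$ to get the lower bound. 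Your route instead extracts first-order information from the energy $E=\frac1{p'}|u'|^p+F(u)$: $E$ is nonincreasing, $E\to0$ (this is exactly the argument of Proposition~\ref{basic1}$(iv)$: $E$ monotone and bounded below converges, so $|u'|$ has a limit, which must be $0$ lest $u$ be unbounded), hence $E\ge0$ everywhere, hence $\frac1{p'}|u'|^p\ge|F(u)|$ on the tail where $u\in(b,a)$, and then $(\mathrm H3)$ gives finite-time extinction. The comparison proof buys robustness and an explicit support bound ($u\equiv0$ beyond $R+\max\{\mathcal A,\mathcal B\}$); your proof buys the fact that $(\mathrm H2)$ enters only through the sign of $F$ near $0$, and it meshes with the energy formalism of Section~\ref{Section:Properties} (your endgame is essentially Corollary~\ref{coro31}).

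The two difficulties you flag at the end actually dissolve inside your own framework, so you should record the following three observations. First, regularity is not an issue: writing $E=\Phi_{p'}(v)+F(u)$ with $v=\phi_p(u')\in C^1$, the chain rule gives $E'=-\frac{N-1}{r}\,|u'|^p$ with no degeneracy at points where $u'=0$. Second, oscillation and the sign of $u'$ are settled simultaneously by the inequality $\frac1{p'}|u'|^p\ge -F(u)>0$, valid on the tail whenever $u\neq0$: it forbids any critical point of $u$ at a nonzero value, so $u$ cannot oscillate there, and if $u(r_1)>0$ with $u'(r_1)>0$ then $u'$ can never vanish while $u>0$, so $u$ increases forever and stays $\ge u(r_1)$, contradicting $u\to0$; hence $u>0$ forces $u'\le-\left(p'\,|F(u)|\right)^{1/p}$, which is exactly your differential inequality, and integrating it against $(\mathrm H3)$ makes $u$ hit $0$ at a finite $r_0$. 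A transversal crossing ($u'(r_0)<0$) is impossible, since just after $r_0$ one would have $u<0$, $u'<0$, and the same persistence argument would again contradict $u\to0$; so $r_0$ is a double zero. Third, after the double zero, $E(r_0)=0$ together with $E\ge0$ and monotonicity forces $E\equiv0$ on $[r_0,\infty)$, hence $u'\equiv0$ and $u\equiv0$ there — the argument of Corollary~\ref{coro31}. With these additions your proof is complete.
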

\vs
\begin{proof} Let us set
\[
\mathcal A=\int_0^a\frac{ds}{\left[p'\,(-F(s))\right]^{\frac 1p}}\;,
\]
where $p'=p/(p-1)$. Defining $\bar u$ on $(0,\mathcal A)$ implicitly by
\[
r=\int_{\bar u(r)}^a\frac{ds}{\left[p'\,(-F(s))\right]^{\frac 1p}}\;,
\]
we first have that
\[
\frac 1{p'}|\bar u'|^p+F(\bar u)=0\;,
\]
and, by differentiation, that $\bar u$ satisfies
\[
(\phi_p({\bar u}'))'+f(\bar u)=0\;.
\]
It is straightforward to check that $\bar u(0)=a$ and $\bar u(\mathcal A)=0$, so that $\bar u'(\mathcal A)=0$ as well. We may then extend $\bar u$ to $(\mathcal A,+\infty)$ by $0$.
\vs
Let $u$ be a bounded solution of~(\ref{eq2}) such that $\lim\limits_{r\to\infty}u(r)=0$. Then there exists $R>0$ such that
\[
b<u(r)<a\quad\forall\;r>R\;.\]
Let
\[w(r):=\bar u(r-R)\quad\forall\;r\geq R\;.\]
Then either $u(r)\leq w(r)$ for any $r\geq R$, and, as a consequence, $u(r)\leq w(r)\leq 0$ for any $r\geq R+\mathcal A$, or there exists $r_0>R$ such that $u(r_0)>w(r_0)$. Assume that this last case holds. Since $(u-w)(R)\leq 0$ and $\lim_{r\to\infty}(u-w)(r)=0$, with no restriction we may assume that
\[
(u-w)(r_0)=\max_{r\in[R,\infty)}(u-w)>0\;.
\]
Hence, there exists a positive $\varepsilon$ such that
\[(u-w)(r)>0\quad\forall\;r\in[r_0,r_0+\varepsilon)\;.
\]
{}From the equations satisfied by $u$ and~$w$,
\begin{eqnarray*}
&&(r^{N-1}\,\phi_p(u'))'+r^{N-1}\,f(u)=0\;,\\
&&(r^{N-1}\,\phi_p(w'))'+r^{N-1}\,f(w)=(N-1)\,r^{N-2}\,\phi_p(w')\;,
\end{eqnarray*}
by integrating from $r_0$ to $r\in(r_0,r_0+\varepsilon)$, and by taking into account the fact that \hbox{$(u-w)'(r_0)=0$}, we get
\begin{eqnarray*}
&&r^{N-1}\,\phi_p(u'(r))-r^{N-1}\,\phi_p(w'(r))\\
&&=-\int_{r_0}^rs^{N-1}\underbrace{\Big(f(u(s))-f(w(s))\Big)}_{< 0\;\mbox{because}\;u(s)>w(s)}\;ds-(N-1)\int_{r_0}^rs^{N-2}\kern -13pt\underbrace{\phi_p(w'(s))}_{\leq 0\;\mbox{because}\;w'\leq 0}\kern -13pt\;ds\;,
\end{eqnarray*}
which proves that $u'>w'$ on $(r_0,r_0+\varepsilon)$. This obviously contradicts the assumption that $u-w$ achieves its maximum at $r=r_0$.

Summarizing, we have proved that $u(r)\leq w(r)$ for any $r\geq R$, and, as a consequence,
\[u(r)\leq 0\quad\forall\;r\geq R+\mathcal A\;.\]

\medskip Similarly, we observe that $\tilde u(r):=-u(r)$ is a solution of
\[
(r^{N-1}\,\phi_p(\tilde u'))'+r^{N-1}\tilde f(\tilde u)=0\;,\quad \tilde u'(0)=0\;,\quad\lim_{r\to\infty}\tilde u(r)=0\;,
\]
where
\[
\tilde f(s):=-f(-s)
\]
has the same properties as $f$, except that the interval $(b,a)$ has to be replaced by the interval $(-a,-b)$. With obvious notations, we obtain that
\[
\tilde u(r)\leq\tilde w(r)\quad\forall\;r\geq R+\mathcal B\;,
\]
for a certain positive $\mathcal B$ and where $\tilde w$ is a nonnegative solution of
\[
(\phi_p(w'))'+\tilde f(w)=0\quad\text{on}\quad(R,R+\mathcal B)\;,
\]
such that $\tilde w(R)=-b$, $\tilde w(R+\mathcal B)=\tilde w'(R+\mathcal B)=0$, and $\tilde w(r)=0$ for any $r\geq R+\mathcal B$. This proves that
\[
u(r)\geq 0\quad\forall\;r\geq R+\mathcal B\;,
\]
which completes the proof:
\[
u\equiv 0\quad\text{on}\quad (R+\max\{\mathcal A,\mathcal B\},\infty)\;.
\]
\end{proof}

\section{Properties of the solutions}\label{Section:Properties}

To deal with problem~\eqref{eq2}, we will use a shooting method and consider the initial value problem
\vs
\begin{equation}\label{ivp}
\begin{gathered}
\left(r^{N-1}\,\phi_p(u')\right)'+r^{N-1}\,f(u)=0\;,\quad r>0\;,\\
u(0)=\lambda>0\;,\quad u'(0)=0\;.
\end{gathered}
\end{equation}
To emphasize the dependence of the solution to \eqref{ivp} in the shooting parameter $\lambda$, we will denote it $u_\lambda$. Solutions to \eqref{ivp} exist and are globally defined on $[0,\infty)$; see a proof of this fact in Appendix~\ref{Section:Appendix}. By Proposition~\ref{unique-ext}, these solutions are uniquely defined until they reach a double zero or a point $r_0$ with $u'(r_0)=0$ and such that $u(r_0)$ is a relative maxima of~$F$.

To be used in our next results, to a solution $u_\lambda(r)$ of \eqref{eqq2}, we associate the energy function
\beq\label{funct-10}
E_\lambda(r):=\frac{|u_\lambda'(r)|^p}{p'}+F(u_\lambda(r))\;,
\eeq
where $p'=p/(p-1)$. The following proposition shows several properties of the solution $u_\lambda$ to \eqref{ivp} that are needed to prove Theorem~\ref{Main}.

\vs
\begin{prop}\label{basic1} Let $f$ satisfy $(\mathrm H1)$ through $(\mathrm H5)$ and let $u_\lambda$ be a solution of \eqref{ivp}.
\begin{enumerate}
\item[$(i)$] The energy $E_\lambda$ is nonincreasing and bounded, hence the limit
\[
\lim_{r\to\infty}E_\lambda(r)=\mathcal E_\lambda
\]
is finite.
\item[$(ii)$] There exists $C_{\lambda}>0$ such that $|u_\lambda(r)|+|u_\lambda'(r)|\le C_{\lambda}$ for all $r\ge 0$.
\item[$(iii)$] If $u_\lambda$ reaches a double zero at some point $r_0>0$, then $u_\lambda$ does not change sign on $[r_0,\infty)$. Moreover, if $u_\lambda\not\equiv 0$ for $r\ge r_0$, then there exists $r_1\ge r_0$ such that $u_\lambda(r)\neq0$, and $E_\lambda(r)<0$ for all $r>r_1$ and $u_\lambda\equiv 0$ on $[r_0,r_1]$.
\item[$(iv)$]If $\lim_{r\to\infty}u_\lambda(r)$ exists, then there exists a zero $\ell$ of $f$ such that
\[
\lim_{r\to \infty}u_\lambda(r)=\ell\quad\mbox{and}\quad \lim_{r\to \infty}u_\lambda'(r)=0\;.
\]
\end{enumerate}
\end{prop}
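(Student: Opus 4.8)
The plan is to build everything on the energy $E_\lambda$ from~\eqref{funct-10}, whose dissipation identity is the engine of the whole proposition.

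First I would establish the basic identity: dividing~\eqref{ivp} by $r^{N-1}$ gives $(\phi_p(u_\lambda'))'+\frac{N-1}{r}\,\phi_p(u_\lambda')+f(u_\lambda)=0$, and multiplying by $u_\lambda'$ yields $E_\lambda'(r)=-\frac{N-1}{r}\,|u_\lambda'(r)|^p\le 0$ for $r>0$. This proves monotonicity at once. Since $u_\lambda'(0)=0$, we have $E_\lambda(r)\le E_\lambda(0)=F(\lambda)$, and from property $(iii)$ of $F$ we get $E_\lambda(r)\ge F(u_\lambda(r))\ge-\bar F$; a bounded monotone function converges, which is $(i)$. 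Part $(ii)$ is then a direct consequence: $\frac1{p'}|u_\lambda'|^p\le F(\lambda)-F(u_\lambda)\le F(\lambda)+\bar F$ controls $u_\lambda'$, while $F(u_\lambda)\le F(\lambda)$ together with $F(s)\to\infty$ as $|s|\to\infty$ (property $(iv)$) confines $u_\lambda$ to a bounded sublevel set of $F$, bounding $u_\lambda$.

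The heart of the matter --- and the step I expect to be the main obstacle, since here the non-uniqueness permitted by $(\mathrm H3)$ has to be confronted --- is $(iii)$. At the double zero, $E_\lambda(r_0)=F(0)=0$, so monotonicity gives $E_\lambda(r)\le 0$ for all $r\ge r_0$. The key remark is that \emph{every} zero $r_*>r_0$ of $u_\lambda$ is automatically a double zero: indeed $E_\lambda(r_*)=\frac1{p'}|u_\lambda'(r_*)|^p\ge 0$, which together with $E_\lambda(r_*)\le 0$ forces $u_\lambda'(r_*)=0$. I would then prove a \emph{no-bump} principle: if $\alpha<\beta$ are double zeros with $u_\lambda\neq 0$ on $(\alpha,\beta)$, integrating the dissipation identity gives $0=E_\lambda(\beta)-E_\lambda(\alpha)=-\int_\alpha^\beta\frac{N-1}{s}|u_\lambda'|^p\,ds$, whence $u_\lambda'\equiv 0$ and so $u_\lambda\equiv 0$ on $(\alpha,\beta)$, a contradiction. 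Hence the open set $\{u_\lambda\neq0\}\cap(r_0,\infty)$ can have no bounded component. Defining $r_1:=\sup\{r\ge r_0:u_\lambda\equiv 0\text{ on }[r_0,r]\}$ (finite, because $u_\lambda\not\equiv0$ there), the component of $\{u_\lambda\neq0\}$ adjacent to $r_1$ must therefore be the whole of $(r_1,\infty)$, on which $u_\lambda$ keeps one fixed sign. This simultaneously yields the absence of sign changes on $[r_0,\infty)$, the identity $u_\lambda\equiv0$ on $[r_0,r_1]$, and $u_\lambda(r)\neq0$ for $r>r_1$. Strictness of the energy then follows: were $E_\lambda(r)=0$ for some $r>r_1$, monotonicity and $E_\lambda(r_1)=0$ would force $u_\lambda'\equiv0$, hence $u_\lambda\equiv0$, on $[r_1,r]$, contradicting $u_\lambda\neq0$ there; so $E_\lambda<0$ on $(r_1,\infty)$.

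Finally, for $(iv)$ I would assume $\lim_{r\to\infty}u_\lambda(r)=L$ (finite by $(ii)$). From $E_\lambda\to\mathcal E_\lambda$ and $F(u_\lambda)\to F(L)$ we get $|u_\lambda'|^p\to p'(\mathcal E_\lambda-F(L))=:c\ge0$. If $c>0$ then $|u_\lambda'|$ is eventually bounded away from $0$, so $u_\lambda'$ keeps a constant sign and $|u_\lambda|\to\infty$, contradicting $u_\lambda\to L$; thus $c=0$ and $u_\lambda'\to0$. To see $f(L)=0$, suppose instead $f(L)>0$, so that $f(u_\lambda)\ge\eta>0$ for large $r$; integrating $(r^{N-1}\phi_p(u_\lambda'))'=-r^{N-1}f(u_\lambda)$ gives $r^{N-1}\phi_p(u_\lambda'(r))\le C-\frac{\eta}{N}\,r^{N}$, so $\phi_p(u_\lambda'(r))\to-\infty$, contradicting $u_\lambda'\to0$; the case $f(L)<0$ is symmetric. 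Hence $\ell:=L$ is a zero of $f$ and $\lim_{r\to\infty}u_\lambda'(r)=0$.
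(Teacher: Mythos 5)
Your proof is correct, and for most of the proposition it runs on the same engine as the paper's: parts $(i)$ and $(ii)$ coincide exactly (dissipation identity $E_\lambda'=-\frac{N-1}{r}|u_\lambda'|^p$ plus $F(s)\to\infty$ as $|s|\to\infty$), and part $(iii)$ uses the same three ingredients — monotonicity of $E_\lambda$, the fact that $E_\lambda\ge 0$ at any zero of $u_\lambda$, and the fact that $E_\lambda$ constant on an interval forces $u_\lambda'\equiv 0$ there — though you package them differently: you first show every zero past $r_0$ is automatically a double zero, then exclude bounded components of $\{u_\lambda\neq 0\}$ via your no-bump principle, so that $\{u_\lambda\neq 0\}\cap(r_0,\infty)$ is a single half-line $(r_1,\infty)$; the paper instead argues that $u_\lambda\not\equiv 0$ forces $E_\lambda(r_1)<0$ for some $r_1$, hence for all later $r$ by monotonicity, so no further zeros can occur, and then takes the infimum of such $r_1$. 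Your version spells out the interval structure the paper leaves implicit; both are rigorous. The genuine divergence is in $(iv)$: the paper proves $f(\ell)=0$ by applying L'H\^opital's rule twice to $\lim_{r\to\infty}(u_\lambda(r)-\ell)/r^{p'}$, whereas you integrate $(r^{N-1}\phi_p(u_\lambda'))'=-r^{N-1}f(u_\lambda)$ directly and show that $f(\ell)>0$ would force $\phi_p(u_\lambda'(r))\le C\,r^{1-N}-\frac{\eta}{N}\,r\to-\infty$, contradicting the boundedness of $u_\lambda'$. Your integration argument is more elementary and avoids the care needed to justify the iterated L'H\^opital step; you also reverse the order (first $u_\lambda'\to 0$ by the constant-sign argument, which is the same as the paper's, then $f(\ell)=0$), and either order is fine.
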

\begin{proof} Let $u_\lambda(r)$ be any solution of \eqref{ivp}. As
\begin{eqnarray*}\label{I'}
E_\lambda'(r)=-\,\frac{(N-1)}{r}\,|u_\lambda'(r)|^p\;,
\end{eqnarray*}
and $N\ge p>1$, we have that $E_\lambda$ is decreasing in $r$. Moreover, we have that
\[
F(\lambda)\ge F(u_\lambda(r))\ge -\,\bar F
\]
and thus $(i)$ and $(ii)$ follow by recalling that from $(\mathrm H5)$ we get $\lim_{|s|\to\infty}F(s)=\infty$.

Assume next that $u_\lambda$ reaches a double zero at some point $r_0>0$. Then $E_\lambda(r_0)=0$ implying that $E_\lambda(r)\le 0$ for all $r\ge r_0$. If $u_\lambda$ is not constantly equal to $0$ for  $r\ge r_0$, then $E_\lambda(r_1)<0$ for some $r_1>r_0$ and thus, by the monotonicity of $E_\lambda$, $E_\lambda(r)<0$ for all $r\ge r_1$. Moreover $u_\lambda$ cannot have the value $0$ again (because at the zeros of $u_\lambda$ we have $E_\lambda\ge 0$). This proves $(iii)$ by taking the infimum on all $r_1$ with the above properties.

Finally, if $\lim_{r\to\infty}u_\lambda(r)=\ell$, then from the equation in \eqref{ivp} and applying L'H\^opital's rule twice, we obtain that
\begin{eqnarray*}
0=\lim_{r\to\infty}\frac{u_\lambda(r)-\ell}{r^{p'}}&=& -\lim_{r\to\infty}\frac{r^{\frac{N-1}{p-1}}\,|u_\lambda'(r)|}{p'r^{\frac{N-1}{p-1}}r^{p'-1}}\\
&=& -\,\frac1{p'}\,\Bigl(\lim_{r\to\infty}\frac{r^{N-1}\,|u_\lambda'(r)|^{p-1}}{r^N}\Bigr)^{p'-1}\\
&=& -\,\frac1{p'}\,\Bigl(\lim_{r\to\infty}\frac{r^{N-1}\,f(u_\lambda)}{N\,r^{N-1}}\Bigr)^{p'-1}=-\,\frac1{p'}\,\Bigl(\frac{f(\ell)}{N}\Bigr)^{p'-1}\;.
\end{eqnarray*}

Next, from the definition in (\ref{funct-10}), it follows that $\lim_{r\to \infty} |u'(r)|=\big(p'\,(\mathcal E_\lambda-F(\ell)\big)^{1/p}$. Assume that $\lim_{r\to \infty} |u'(r)|:=m>0.$ Then given $0<\varepsilon <m$ there is $r_0>0$ such that $u'(r)>m-\varepsilon>0$ or $u'(r)<-m+\varepsilon<0$, for all $r\ge r_0$. Hence either $u(r)>u(r_0)+(m-\varepsilon)(r-r_0)$ or $u(r)<u(r_0)+(-m+\varepsilon)(r-r_0)$, for all $r> r_0$, which is impossible because $\lim_{r\to\infty}E_\lambda(r)=\mathcal E_\lambda$ is finite, and $(iv)$ follows.
\end{proof}\vs

\begin{prop}\label{basic2} Let $f$ satisfy $(\mathrm H1)$-$(\mathrm H5)$ and let $u_\lambda$ be a solution of \eqref{ivp}.
Then $u_\lambda$ has at most a finite number of sign changes. \end{prop}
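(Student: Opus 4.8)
The plan is to argue by contradiction using the energy dissipation recorded in Proposition~\ref{basic1}. Suppose $u:=u_\lambda$ changes sign infinitely often, so it has infinitely many simple zeros $0<r_1<r_2<\cdots$. These cannot accumulate at a finite point: between two consecutive sign-change zeros $u$ has an interior extremum, so an accumulation would either force the extremal values to stay bounded away from $0$ on shrinking intervals — whence $|u'|\to\infty$, contradicting Proposition~\ref{basic1}$(ii)$ — or produce a double zero, around which no rotation is possible since $0$ is a strict local maximum of $F$ by $(\mathrm H2)$. Hence $r_k\to\infty$. At each zero, $F(0)=0$ gives $E_\lambda(r_k)=|u'(r_k)|^p/p'\ge 0$, and since $E_\lambda$ decreases to $\mathcal E_\lambda$ we obtain $\mathcal E_\lambda\ge 0$; consequently $E_\lambda(r)\ge\mathcal E_\lambda\ge 0$ for every $r$.

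Let $s_k$ be the extremum between $r_k$ and $r_{k+1}$, so $u'(s_k)=0$ and $E_\lambda(s_k)=F(u(s_k))$. From $E_\lambda(s_k)\ge\mathcal E_\lambda\ge 0$ and the fact that $F<0$ on $(B,A)\setminus\{0\}$ I get $|u(s_k)|\ge c:=\min\{A,|B|\}>0$, while $E_\lambda(s_k)\le E_\lambda(0)=F(\lambda)$ together with $F(s)\to\infty$ (from $(\mathrm H5)$) gives $|u(s_k)|\le M_+$ for a fixed $M_+$. Thus all oscillation amplitudes lie in a fixed compact set bounded away from $0$.

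The heart of the proof is a uniform upper bound $\ell_k:=r_{k+1}-r_k\le T$ on the half-period, obtained by splitting each half-oscillation into three pieces. Near the turning point $s_k$ the value $u(s_k)$ lies in $[A,M_+]$ or $[-M_+,B]$, where $f$ is bounded away from $0$: here $(\mathrm H4)$ combined with $F(A)=F(B)=0$ forces $f(A),f(B)\neq 0$, so the turning point is nondegenerate and the time spent near it is uniformly bounded. On the intermediate band, away from both $0$ and the extrema, $-F\ge\mu>0$, so $|u'|^p/p'=E_\lambda-F(u)\ge\mu$ bounds the speed below and the crossing time above. The delicate passage is through $u=0$, where $F$ has a strict maximum and $-F$ degenerates. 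Using $E_\lambda\ge 0$ I bound $|u'|\ge(p'\,|F(u)|)^{1/p}$, so the time to cross $(-\delta,\delta)$ is at most $\int_{-\delta}^{\delta}(p'\,|F(u)|)^{-1/p}\,du$, finite precisely by the local integrability assumption $(\mathrm H3)$. Crucially this bound is uniform in $k$ even when the crossing speed tends to $0$, which is exactly where $(\mathrm H3)$ is indispensable.

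Granting $\ell_k\le T$, we have $r_k\le r_1+(k-1)T$, hence $\sum_k r_k^{-1}=\infty$. To conclude I collect dissipation on a fixed sub-band away from $0$ in each half-oscillation, where $|u'|\ge v_0>0$ and $u$ sweeps a fixed distance $d_0>0$; since $E_\lambda'(r)=-(N-1)|u'|^p/r$, this yields $E_\lambda(r_k)-E_\lambda(r_{k+1})\ge (N-1)v_0^{p-1}d_0/r_{k+1}$. Summing over $k$ gives $E_\lambda(0)-\mathcal E_\lambda\ge (N-1)v_0^{p-1}d_0\sum_k r_{k+1}^{-1}=\infty$, contradicting the finiteness of $\mathcal E_\lambda$ in Proposition~\ref{basic1}$(i)$. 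I expect the main obstacle to be the uniform half-period estimate of the third paragraph, and within it the $u=0$ crossing controlled by $(\mathrm H3)$; the nondegeneracy of the turning points supplied by $(\mathrm H4)$ is the other technical point.
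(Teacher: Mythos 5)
Your proposal is correct and follows essentially the same route as the paper's own proof: zeros cannot accumulate because extremal values are bounded below by $\min\{A,|B|\}$, the energy stays nonnegative, the half-period between consecutive zeros is uniformly bounded by combining the $(\mathrm H3)$-integral near the zero crossing with an ODE (nondegenerate turning point) estimate where $f$ is bounded away from zero, and the contradiction comes from summing the dissipation $E_\lambda'(r)=-(N-1)\,|u_\lambda'(r)|^p/r$ over oscillations against the at-most-linear growth of the zeros, which makes $\sum_k r_k^{-1}$ diverge. The only differences are cosmetic: you split each half-oscillation into three pieces where the paper uses two, and you bound the per-oscillation dissipation via the swept distance $\int|u'|\,dr\ge d_0$ where the paper uses the mean value theorem.
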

\begin{proof} The result is true if $u$ reaches a double zero. Let us prove it by contradiction. If $\{z_n\}$ is a sequence of zeros accumulating at some double zero $r_0$, then  for each $n\in\mathbb N$, there exists a unique point $r_n\in(z_n,z_{n+1})$ at which $u_\lambda$ reaches its maximum or minimum value. At these points, using that $E_\lambda(r_n)\ge E_\lambda(z_n)\ge0$,  we must have that
\[
|u_\lambda(r_n)|\ge \min\{|B|, A\}.
\]
As we also have that $u_\lambda(r_n)\to u_\lambda(r_0)=0$, we obtain a contradiction.

This proves that $u_\lambda$ has only a finite number of zeros on $(0,r_0)$, and by Proposition~\ref{basic1}$(iii)$, we know that $u_\lambda$ cannot change sign on $(r_0,\infty)$. Hence, without loss of generality we may assume that $u$ does not have any double zero. By the above argument, we also know that zeros cannot accumulate.

Next, we argue by contradiction and suppose that there is an infinite sequence (tending to infinity) of simple zeros of $u$. Then $E_\lambda(r)\ge 0$ for all $r>0$. We denote by $\{z_n^+\}$ the zeros for which $u'(z_n^+)>0$ and by $\{z_n^-\}$ the zeros for which $u'(z_n^-)<0$. We have
\[
0<z_1^-<z_1^+<z_2^-<\cdots<z_n^+<z_{n+1}^-<z_{n+1}^+<\cdots
\]
Between $z_n^-$ and $z_n^+$ there is a minimum $r_n^m$ where $u(r_n^m)<0$ and between $z_n^+$ and $z_{n+1}^-$ there is a maximum $r_n^M$ where $u(r_n^M)>0$. As $E_\lambda(r_n^M),\ E_\lambda(r_n^m)\ge 0$, it must be that $u(r_n^m)<B$ and $u(r_n^M)>A$.

\medskip We claim that \emph{there exists $T>0$ and $n_0\in\mathbb N$ such that the distance between two consecutive zeros is less than $T$ for all $n\ge n_0$}.

Indeed, let $a^+$ be the largest positive zero of $f$ ($b^-$ the smallest negative zero of $f$). Set
\[
d=A-a^+\;,\quad b_1=a^++\frac{d}4\;,\quad b_2=A-\frac{d}4\;.
\]

Let $r_{1,n}\in(z_n^+,r_n^M)$ be the unique point where $u(r_{1,n})=b_1$, and let $r_{2,n}\in(z_n^+,r_n^M)$ be the unique point where $u(r_{2,n})=b_2$. Then $z_n^+<r_{1,n}<r_{2,n}$. For $r\in(z_n^+,r_{2,n})$, $u(r)\in(0,b_2)\subset(0,B^+)$, hence $F(u(r))<0$ and thus
\[
\frac{|u'|^p}{p'}\ge |F(u(r))|
\]
implying that
\[
\frac{u'(r)}{|F(u(r))|^{1/p}}\ge (p')^{1/p}\quad \mbox{for all }r\in(z_n^+,r_{2,n})\;,
\]
and thus (from $(\mathrm H3)$)
\be\label{1}
\int_0^{b_2}\frac{du}{|F(u)|^{1/p}}\ge (p')^{1/p}\,(r_{2,n}-z_n^+)
\ee
Next, from the equation we have that for $r\in[r_{2,n},r_n^M]$,
\ben
|(\phi_p(u'))'(r)|&=&\Bigm|\frac{(N-1)}{r}\,\phi_p(u'(r))+f(u(r))\Bigm|\\
&\ge&f(u(r))-\frac{(N-1)}{r}\,\phi_p(C_{\lambda})\\
&\ge&f(b_2)-\frac{(N-1)}{r}\,\phi_p(C_{\lambda})\\
&\ge& \frac1{2}\,f(b_2)\quad\mbox{for all $r\ge\frac{2\,(N-1)\,\phi_p(C_{\lambda})}{f(b_2)}$}\;.
\een
Hence, choosing $n_0$ such that $z_n^+\ge \frac{2\,(N-1)\,\phi_p(C_{\lambda})}{f(b_2)}$ for all $n\ge n_0$, we have that
\[
|(\phi_p(u'))'(r)|\ge \frac1{2}\,f(b_2)\quad\mbox{for all }r\in[r_{2,n},r_n^M]
\]
and therefore
\[
\phi_p(C_{\lambda})\ge \phi_p(u'(r_{2,n}))-\phi_p(u'(r_n^M))=(\phi_p(u'))'(\xi)(r_n^M-r_{2,n})\ge\frac1{2}\,f(b_2)(r_n^M-r_{2,n})
\]
implying that
\be\label{2}
(r_n^M-r_{2,n})\le \frac{2\,\phi_p(C_{\lambda})}{f(b_2)}\;.
\ee
{}From \eqref{1} and \eqref{2} we conclude that
\[
r_n^M-z_n^+\le \frac1{(p')^{1/p}}\int_0^{b_2}\frac{du}{|F(u)|^{1/p}}+\frac{2\,\phi_p(C_{\lambda})}{f(b_2)}:=T_1\;.
\]
A similar argument over the interval $[r_n^M,z_{n+1}^-]$ yields
\[
z_{n+1}^--r_n^M\le T_1\;,
\]
implying
\[
z_{n+1}^--z_n^+\le 2\,T_1
\]
and finally, the same complete argument over the interval $[z_n^-,z_n^+]$ yields
\[
z_{n}^+-z_n^-\le 2\,\overline{T}_1
\]
for some $\overline{T}_1$ which will depend on $b^-$ and $B$ only and the claim follows with $T=\max\{2\,\overline{T}_1,2\,T_1\}$.

We can now prove the proposition. Observe that $u(r)\in[b_1,b_2]$ for $r\in[r_{1,n},r_{2,n}]$ and thus
\be
\label{3}|u'(r)|^p\ge p'\,|F(u(r))|\ge p'\,|F(b_2)|
\ee
and from the mean value theorem
\[
b_2-b_1\le C_{\lambda}\,(r_{2,n}-r_{1,n})\;,
\]
hence
\be\label{4}
r_{2,n}-r_{1,n}\ge\frac{b_2-b_1}{C_{\lambda}}\;.
\ee
Then,
\be\label{finite}
\infty>E_\lambda(z_{n_0}^+)-E_\lambda(\infty)&=&(N-1)\int_{z_{n_0}^+}^\infty\frac{|u'(t)|^p}{t}\;dt\\
&\ge&(N-1)\sum_{k=n_0}^\infty\int_{r_{1,k}}^{r_{2,k}}\frac{|u'(t)|^p}{t}\;dt\nonumber\\
\mbox{ from \eqref{3} }&\ge&(N-1)\sum_{k=n_0}^\infty p'\,|F(b_2)|\,(r_{2,k}-r_{1,k})\frac1{r_{2,k}}\nonumber\\
\mbox{ from \eqref{4} }&\ge& p'\,|F(b_2)|\, \frac{b_2-b_1}{C_{\lambda}}\sum_{k=n_0}^\infty \frac1{r_{2,k}}\;.\nonumber
\ee

But, setting $s_{2k-1}=r_{1,n_0+k-1},\,s_{2k}=r_{2,n_0+k-1},$ we have that $s_1<s_2<s_3<\cdots$ and for any $i$, $s_{i+1}-s_i\le 3\,T.$ Hence $s_n-s_1\le 3\,(n-1)\,T$, implying that
\[
s_n\le s_1+3\,(n-1)\,T
\]
and thus
\[
\frac1{s_n}\ge \frac1{s_1+3\,(n-1)\,T}\;.
\]
Therefore,
\[
\sum_{k=n_0}^\infty \frac1{r_{2,k}}=\sum_{k=1}^\infty \frac1{r_{2,n_0+k-1}}=\sum_{k=1}^\infty \frac1{r_{2,n_0+k-1}}= \sum_{k=1}^\infty \frac1{s_{2k}}\ge \sum_{k=1}^\infty \frac1{s_1+3\,(2\,k-1)\,T}=\infty
\]
contradicting the finiteness of the left hand side in \eqref{finite} and the proposition follows.
\end{proof}\vs

\begin{coro}\label{coro31} Under the assumptions of Proposition~\ref{basic2}, the only solutions $u_\lambda$ of~\eqref{ivp} satisfying $E_\lambda(r)\ge 0$ for all $r\ge 0$ are those that reach a double zero at some point $r_0>0$ and $u_\lambda(r)\equiv 0$ for all $r\ge r_0$. \end{coro}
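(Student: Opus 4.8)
The plan is to establish the set equality characterizing $\{u_\lambda:E_\lambda(r)\ge 0\ \forall r\ge 0\}$ by proving both inclusions. The easy one is that any solution reaching a double zero at some $r_0>0$ and vanishing identically on $[r_0,\infty)$ satisfies $E_\lambda\ge 0$: on $[r_0,\infty)$ we have $u_\lambda=u_\lambda'=0$, so $E_\lambda\equiv 0$ there, and since $E_\lambda$ is nonincreasing (Proposition~\ref{basic1}$(i)$) we get $E_\lambda(r)\ge E_\lambda(r_0)=0$ for $r<r_0$. The substance is the reverse implication, which I turn to now.

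Assume $E_\lambda(r)\ge 0$ for all $r$. Then $\mathcal E_\lambda:=\lim_{r\to\infty}E_\lambda(r)$ exists and is $\ge 0$ by Proposition~\ref{basic1}$(i)$. If $u_\lambda$ reaches a double zero at some $r_0>0$, Proposition~\ref{basic1}$(iii)$ leaves only the options $u_\lambda\equiv 0$ on $[r_0,\infty)$ or $E_\lambda(r)<0$ for large $r$; the latter is ruled out by $E_\lambda\ge 0$, so the conclusion holds. Assume henceforth that there is no double zero. By Proposition~\ref{basic2}, $u_\lambda$ changes sign finitely often, hence past its last (simple) sign change it keeps a constant sign, say positive (the negative case being symmetric, with $A$ replaced by $B$). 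The key step is to show that $\ell:=\lim_{r\to\infty}u_\lambda(r)$ exists. At any point $\rho$ with $u_\lambda'(\rho)=0$ in the region where $u_\lambda>0$ one has $E_\lambda(\rho)=F(u_\lambda(\rho))\ge 0$; since $F<0$ on $(0,A)$, $F(A)=0$, and $F$ is strictly increasing on $(A,\infty)$, this forces $u_\lambda(\rho)\ge A$. If $u_\lambda$ has finitely many critical points it is eventually monotone and bounded (Proposition~\ref{basic1}$(ii)$), hence convergent. If it has critical points $\rho_n\to\infty$, then $F(u_\lambda(\rho_n))=E_\lambda(\rho_n)\to\mathcal E_\lambda$; because $u_\lambda(\rho_n)\ge A$ and $F$ maps $[A,\infty)$ homeomorphically onto $[0,\infty)$, all critical values converge to $F^{-1}(\mathcal E_\lambda)$, and since $u_\lambda$ is monotone between consecutive critical points a squeeze gives convergence of $u_\lambda$.

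Once $\ell=\lim_{r\to\infty}u_\lambda(r)$ is known to exist, Proposition~\ref{basic1}$(iv)$ yields that $\ell$ is a zero of $f$ and that $u_\lambda'\to 0$, so that $\mathcal E_\lambda=F(\ell)$. If $\ell\neq 0$ then $f(\ell)=0$ gives $F(\ell)<0$ by assumption~$(\mathrm H4)$, contradicting $\mathcal E_\lambda\ge 0$; therefore $\ell=0$. Then $u_\lambda$ is a bounded solution of~\eqref{eq2}, so Lemma~\ref{compact_support} applies and $u_\lambda$ has compact support. Putting $r_0:=\inf\{r>0:u_\lambda\equiv 0\text{ on }[r,\infty)\}$, continuity and the $C^1$ regularity give $u_\lambda(r_0)=u_\lambda'(r_0)=0$, with $r_0>0$ since $u_\lambda(0)=\lambda>0$; thus $u_\lambda$ reaches a double zero at $r_0$ and vanishes on $[r_0,\infty)$, as required.

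I expect the main obstacle to be the middle step — ruling out persistent, nondecaying oscillation of $u_\lambda$ inside a region of constant sign. Proposition~\ref{basic2} controls only the number of sign changes, not interior extrema, so the existence of $\ell$ must be obtained separately. The mechanism that makes it work is that $E_\lambda\ge 0$ pins every critical value into $[A,\infty)$ (resp.\ $(-\infty,B]$), where $F$ is strictly monotone; convergence of the energy then transfers to convergence of the critical values, and I would take care to justify the squeeze when the extrema are not themselves monotone.
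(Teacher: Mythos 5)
Your proof is correct, and it reaches the paper's conclusion with the same basic ingredients (Proposition~\ref{basic2}, Proposition~\ref{basic1}, $(\mathrm H4)$ and the compact support principle), but it routes through them differently, so a comparison is worthwhile. The paper argues entirely by contradiction from ``no double zero'': in the oscillatory case it evaluates the \emph{equation} at a first positive minimum $r_1$, where $f(u_\lambda(r_1))\le 0$ forces $u_\lambda(r_1)\in(0,A)$, hence $E_\lambda(r_1)=F(u_\lambda(r_1))<0$, an immediate contradiction; in the eventually monotone case it uses $(\mathrm H3)$/compact support to rule out $\ell=0$ (a compactly supported solution would have a double zero), so $\ell\neq0$, and then $(\mathrm H4)$ gives $F(\ell)<0$, again contradicting $E_\lambda\ge 0$; once the double zero is forced to exist, the paper shows $E_\lambda\equiv0$ afterwards and differentiates the energy to get $u_\lambda'\equiv0$, where you instead cite Proposition~\ref{basic1}$(iii)$ (an equivalent shortcut). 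You, by contrast, never use the sign of $f$ at interior minima: you keep the oscillatory case alive, pin all critical values into $[A,\infty)$ by energy alone, and combine convergence of $E_\lambda$ with strict monotonicity of $F$ on $[A,\infty)$ to squeeze out a limit $\ell$; you then run $(\mathrm H4)$ in the opposite direction (to force $\ell=0$ rather than $\ell\neq0$) and use Lemma~\ref{compact_support} \emph{constructively}, to produce the double zero, rather than to contradict its absence. What the paper's route buys is brevity: the minimum argument kills oscillation in two lines and avoids your squeeze, which needs care when the critical set is not discrete --- your ``consecutive critical points'' and the dichotomy ``finitely many critical points versus $\rho_n\to\infty$'' should be replaced by ``critical set bounded (hence eventually monotone) versus unbounded,'' and in the latter case one should squeeze between $\alpha=\sup\{s\le r:\,u_\lambda'(s)=0\}$ and $\beta=\inf\{s\ge r:\,u_\lambda'(s)=0\}$, between which $u_\lambda$ is monotone. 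What your route buys is a uniform, direct endgame (limit exists, limit is zero, support is compact) together with the slightly stronger byproduct that $\lim_{r\to\infty}u_\lambda(r)$ exists for every solution with $E_\lambda\ge0$, obtained without using the equation at critical points; you also prove the trivial converse inclusion, which the paper leaves implicit.
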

\begin{proof} Let $u_\lambda$ be a solution of \eqref{ivp} such that $E_\lambda(r)\ge 0$ for all $r\ge0$, and assume that it does not reach any double zero. By Proposition~\ref{basic2}, $u_\lambda$ has at most a finite number of (simple) zeros. Without loss of generality we may assume that $u_\lambda(r)>0$ for $r>r_0$, for some $r_0>0$.

If $u_\lambda$ is eventually monotone, then $\lim_{r\to\infty}u_\lambda(r)=\ell$ exists, and thus by Proposition~\ref{basic1}(iv), $\ell$ is a zero of $f$ and $u_\lambda'\to 0$. By assumption $(\mathrm H3)$ \emph{i.e.} the compact support assumption, we know that $\ell\not=0$. Hence $\lim_{r\to\infty}E_\lambda(r)=F(\ell)<0$ because of $(\mathrm H4)$, implying that $E_\lambda(r)<0$ for $r$ sufficiently large.

If $u_\lambda$ has an infinite sequence of critical points, then in particular it has a first positive minimum at some point $r_1>0$. From the equation, $f(u_\lambda(r_1))\le 0$ and thus $0<u_\lambda(r_1)<A$, and thus $E_\lambda(r_1)=F(u_\lambda(r_1))<0$ implying that $E_\lambda(r)<0$ for all $r\ge r_1$.

Therefore, in both cases $u_\lambda$ must reach a first double zero at some $r_0>0$. As $E_\lambda$ decreases, it follows that $E_\lambda(r)=0$ for all $r\ge r_0$, and in particular, by differentiation,
\[
\Bigl((\phi_p(u_\lambda'))'+f(u_\lambda)\Bigr)\,u_\lambda'(r)=0\quad\mbox{for all $r\ge r_0$}\;,
\]
hence
\[
-\,\frac{N-1}{r}\,|u_\lambda'(r)|^p=0\quad\mbox{for all $r\ge r_0$}\;,
\]
implying that $u_\lambda'(r)=0$ for all $r\ge r_0$, thus $u_\lambda(r)=0$ for all $r\ge r_0$.
\end{proof}\vs

\begin{prop}\label{Pmm1} Let $f$ satisfy $(\mathrm H1)$-$(\mathrm H5)$ and let $u_\lambda$ be a solution of \eqref{ivp}. Let $\{s_n\}$ be any sequence in $[0,\infty)$ that tends to $\infty$ as $n\to\infty$ and define the sequence of real functions $\{v_n\}$ by
\[
v_n(r)=u_\lambda(r+s_n)\;.
\]
Then $\{v_n\}$ contains a subsequence that converges pointwise to a continuous function~$u_\lambda^\infty$, with uniform convergence on compact sets of $[0,\infty).$ Furthermore the function $u_\lambda^\infty$ is a solution to the asymptotic equation
\be\label{asym12}
(\phi_p(u'))'+f(u)=0\;.
\ee
Thus it satisfies
\[
(\phi_p({u_\lambda^\infty}'(r)))'+ f(u_\lambda^\infty(r))=0\;,
\]
for all $r\in [0,\infty)$. \end{prop}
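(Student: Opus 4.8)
The plan is to obtain $u_\lambda^\infty$ as a limit of the shifted orbits $v_n$ by a compactness argument, and then to pass to the limit in an integral reformulation of the equation in which the spatial weight degenerates as $s_n\to\infty$.

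First I would record the uniform a priori bounds. By Proposition~\ref{basic1}$(ii)$ there is $C_\lambda>0$ with $|u_\lambda(r)|+|u_\lambda'(r)|\le C_\lambda$ for all $r\ge 0$; since $v_n(r)=u_\lambda(r+s_n)$ and $v_n'(r)=u_\lambda'(r+s_n)$, the same bound holds for every $v_n$. Hence $\{v_n\}$ is uniformly bounded and uniformly Lipschitz with constant $C_\lambda$, so it is equicontinuous, and by the Arzel\`a--Ascoli theorem, after passing to a subsequence (not relabeled), $v_n\to u_\lambda^\infty$ uniformly on compact subsets of $[0,\infty)$ with $u_\lambda^\infty$ continuous; this already yields the first assertion. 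Discarding finitely many terms I may assume $s_n\ge 1$, so that no singularity appears at $r=0$ below, and by Bolzano--Weierstrass I may also assume along the same subsequence that the bounded real sequence $\phi_p(v_n'(0))=\phi_p(u_\lambda'(s_n))$ converges to some $c\in\R$.

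Next I would write the equation in integral form. Expanding the divergence, $u_\lambda$ solves $(\phi_p(u'))'+\frac{N-1}{r}\,\phi_p(u')+f(u)=0$ for $r>0$, so after the shift each $v_n$ satisfies, for $r\ge 0$,
\[
\phi_p(v_n'(r))=\phi_p(v_n'(0))-\int_0^r\frac{N-1}{t+s_n}\,\phi_p(v_n'(t))\,dt-\int_0^r f(v_n(t))\,dt .
\]
I then pass to the limit term by term on a compact interval $[0,\rho]$: the weight term is bounded by $\frac{(N-1)\,\phi_p(C_\lambda)}{s_n}\,\rho\to 0$ uniformly, and since $v_n\to u_\lambda^\infty$ uniformly on $[0,\rho]$ and $f$ is continuous by $(\mathrm H1)$, one has $f(v_n)\to f(u_\lambda^\infty)$ uniformly, whence $\int_0^r f(v_n)\to\int_0^r f(u_\lambda^\infty)$ uniformly on $[0,\rho]$. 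Thus the right-hand side converges uniformly, forcing $\phi_p(v_n')\to g$ uniformly on $[0,\rho]$, where $g(r):=c-\int_0^r f(u_\lambda^\infty(t))\,dt$. As $\phi_p$ is a homeomorphism of $\R$ with continuous inverse $\phi_p^{-1}=\phi_{p'}$, this gives $v_n'=\phi_p^{-1}(\phi_p(v_n'))\to\phi_p^{-1}(g)$ uniformly on compacts.

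Finally I would identify the limit as a classical solution. Having $v_n\to u_\lambda^\infty$ and $v_n'\to\phi_p^{-1}(g)$ uniformly on compacts, the standard theorem on uniform convergence of derivatives shows $u_\lambda^\infty\in C^1([0,\infty))$ with $(u_\lambda^\infty)'=\phi_p^{-1}(g)$, that is $\phi_p((u_\lambda^\infty)')=g$. Since $g$ is $C^1$ with $g'=-f(u_\lambda^\infty)$, the function $\phi_p((u_\lambda^\infty)')$ lies in $C^1$, and differentiating yields $(\phi_p((u_\lambda^\infty)'))'+f(u_\lambda^\infty)=0$ on $[0,\infty)$, which is exactly \eqref{asym12}. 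I expect the main obstacle to be precisely this passage: because $\phi_p$ is degenerate or singular when $p\neq 2$, one cannot differentiate the uniform limit directly, so the convergence of the derivatives $v_n'$ must be extracted from the integral equation together with the controlled vanishing of the $r$-weight as $s_n\to\infty$, and it is the continuity of $\phi_p^{-1}$ that converts the $L^\infty$-convergence of $\phi_p(v_n')$ into convergence of $v_n'$ itself.
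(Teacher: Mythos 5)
Your proposal is correct and follows essentially the same route as the paper: uniform bounds from Proposition~\ref{basic1}$(ii)$ plus Arzel\`a--Ascoli, convergence of $\phi_p(v_n'(0))$ along a subsequence, passage to the limit in an integral reformulation of the shifted equation (you expand the divergence and bound the term $\int_0^r\frac{N-1}{t+s_n}\,\phi_p(v_n')\,dt$ by $O(1/s_n)$, while the paper keeps the weight $\bigl(\frac{t+s_n}{r+s_n}\bigr)^{N-1}$ and lets it tend to $1$), and finally recovery of $v_n'\to\phi_{p'}(g)$ via continuity of $\phi_{p'}$ and identification of the limit as a $C^1$ solution of \eqref{asym12}. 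The two treatments of the weight are equivalent, so this is the same proof in substance.
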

\begin{proof} Let $u_\lambda$ be any solution to~(\ref{ivp}). We know that there exist two constants $c_\lambda^1$ and $c_\lambda^2$ such that
\[
u_\lambda(r)\leq c_\lambda^1\;,\quad u_\lambda'(r)\leq c_\lambda^2\;,\quad\text{for all }r\ge 0\;.
\]
Let now $\{s_n\}$ be any sequence in $[0,\infty)$ that tends to $\infty$ as $n\to\infty$ and define the sequence of real functions $\{v_n\}$ by
\[
v_n(r)=u_\lambda(r+s_n)\;.
\]
Then
\[
v_n(r)\leq c_\lambda^1\;,\quad v_n'(r)\leq c_\lambda^2\;,\quad\text{for all }r\ge 0\;.
\]
Hence, for any $s,t>0$, and all $n\in\N$,
\[
|v_n(s)-v_n(t)| \leq c_\lambda^2\,|s-t|\;.
\]
Then, from Ascoli's theorem (see \cite[Theorem 30]{MR1013117}), $\{v_n\}$ contains a subsequence, denoted the same, that converges pointwise to a continuous function $u_\lambda^\infty$, with uniform convergence on compact sets of $[0,\infty).$

It is clear that each function $v_n$ satisfies
\[
((r+s_n)^{N-1}\,\phi_p(v_n'(r)))'+(r+s_n)^{N-1} f(v_n(r))=0\;,
\]
and hence
\[
\phi_p(v_n'(r))=\phi_p(v_n'(0))-\int_0^r\Bigl(\dis\frac{t+s_n}{r+s_n}\Bigr)^{N-1}\,f(v_n(r))=0\;.
\]
By passing to a subsequence if necessary we can assume that $\phi_p(v_n'(0))\to a$ as $n\to \infty.$ Let now $T>0$, then since $\{f(v_n\}$ converges uniformly in $[0,T]$ to $f(u_\lambda^\infty)$, we find that $v_n'$ converges uniformly to a continuous function $z$ given by
\[
z(r)=\phi_{p'}\Bigl(a-\int_0^r f(u_\lambda^\infty(t))\;dt\Bigl)\;.
\]
Hence $z'$ exists and is continuous. Furthermore from
\[
v_n(r)=v_n(0) +\int_0^rv'_n(t)\;dt\;,
\]
letting $n\to\infty$, we obtain that
\[
u_\lambda^\infty(r)=u_\lambda^\infty(0)+\int_0^r z(t)\;dt\;.
\]
Hence $u_\lambda^\infty$ is continuously differentiable and ${u_\lambda^\infty}'(r)=z'(r),$ for all $r\in [0,T].$ Combining, we obtain
\[
\phi_p({u_\lambda^\infty}'(r))=a-\int_0^r f(u_\lambda^\infty(t))\;dt\;,
\]
that implies first that $a=\phi_p({u_\lambda^\infty}'(0)),$ and then that
\[
(\phi_p({u_\lambda^\infty}'(r)))'+ f(u_\lambda^\infty(r))=0\;.
\]
This argument show indeed that $u_\lambda^\infty$ is a solution to~(\ref{asym12}) for all $r\in [0,\infty)$.
\end{proof}

\begin{prop}\label{basic3} $\lim\limits_{r\to \infty} E_\lambda(r)=\mathcal E_\lambda=F(\ell)$, where $\ell$ is a zero of $f$. \end{prop}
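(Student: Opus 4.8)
The plan is to identify the limit $\mathcal E_\lambda$ by combining the monotonicity of the energy with the finiteness of the number of sign changes, treating the dangerous case of persistent oscillation near a nonzero equilibrium by an energy--dissipation estimate. First I would record that $\mathcal E_\lambda=\lim_{r\to\infty}E_\lambda(r)$ exists by Proposition~\ref{basic1}$(i)$. If $\mathcal E_\lambda\ge 0$, then $E_\lambda(r)\ge 0$ for all $r$ by monotonicity, so Corollary~\ref{coro31} forces $u_\lambda$ to reach a double zero and vanish identically afterwards; hence $\mathcal E_\lambda=0=F(0)$ and $\ell=0$. So I may assume $\mathcal E_\lambda<0$. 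Then $E_\lambda(r)<0$ for large $r$, whence $F(u_\lambda(r))\le E_\lambda(r)<0$, which by property~$(i)$ of $F$ confines $u_\lambda(r)$ to $(B,A)\setminus\{0\}$; since by Proposition~\ref{basic2} there are only finitely many sign changes, $u_\lambda$ has a fixed sign for large $r$, say $u_\lambda(r)\in(0,A)$ (the case $u_\lambda\in(B,0)$ being identical after replacing $u_\lambda$ by $-u_\lambda$, as in Lemma~\ref{compact_support}). It therefore suffices to exhibit a zero $\ell$ of $f$ with $F(\ell)=\mathcal E_\lambda$; note that whenever $\lim_{r\to\infty}u_\lambda(r)$ happens to exist, Proposition~\ref{basic1}$(iv)$ already provides such an $\ell$ together with $u_\lambda'\to 0$.

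Next I would split according to whether $u_\lambda$ is eventually monotone. If it is, the limit exists and Proposition~\ref{basic1}$(iv)$ finishes the proof. Otherwise $u_\lambda$ has infinitely many critical points $\tau_1<\tau_2<\cdots\to\infty$, alternating local maxima (values $M_n$, where the equation at a critical point gives $f(M_n)\ge 0$) and local minima (values $m_n$, where $f(m_n)\le 0$), all lying in $(0,A)$. At a critical point one has $E_\lambda=F(u_\lambda)$, so by the monotonicity of $E_\lambda$ the interleaved values $F(M_n)$ and $F(m_n)$ decrease to $\mathcal E_\lambda$. Passing to subsequential limits $M_n\to M^*$, $m_n\to m^*$, I obtain $m^*\le M^*$ with $F(M^*)=F(m^*)=\mathcal E_\lambda$ and $f(m^*)\le 0\le f(M^*)$. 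If $f$ vanishes at $M^*$ or at $m^*$, I take $\ell$ to be that point and conclude at once that $\mathcal E_\lambda=F(\ell)$. So the only remaining possibility to exclude is $f(m^*)<0<f(M^*)$; in that case $m^*<M^*$, i.e. the oscillation has amplitude bounded below, and I will derive a contradiction.

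The heart of the argument is this exclusion. From $E_\lambda'(r)=-(N-1)\,r^{-1}\,|u_\lambda'(r)|^p$ and the convergence of $E_\lambda$ I first get the summability
\[
\sum_{n}\bigl(E_\lambda(\tau_n)-E_\lambda(\tau_{n+1})\bigr)=E_\lambda(\tau_1)-\mathcal E_\lambda<\infty .
\]
Fix an interior interval $[c_1,c_2]\subset(m^*,M^*)$ on which $F\le\mathcal E_\lambda-\delta$ for some $\delta>0$; for large $n$ one has $[c_1,c_2]\subset(m_n,M_n)$. Using $|u_\lambda'|^p/p'=E_\lambda-F(u_\lambda)\ge\mathcal E_\lambda-F(u_\lambda)$, hence $|u_\lambda'|^{p-1}\ge(p'(\mathcal E_\lambda-F(u_\lambda)))^{1/p'}$, and changing variables from $r$ to $u$ on the descending part of a cycle, the energy dissipated over one oscillation is bounded below by $K/\tau_n$ with $K=(N-1)\,(c_2-c_1)\,(p'\delta)^{1/p'}>0$. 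Since in the remaining case $f$ is bounded away from $0$ near both turning values ($f(M^*)>0>f(m^*)$), the equation shows that $(\phi_p(u_\lambda'))'$ is bounded away from $0$ there, so the crossings are transversal and consecutive critical points stay within a bounded distance, exactly as in the proof of Proposition~\ref{basic2}; thus $\tau_n=O(n)$ and $\sum_n 1/\tau_n=\infty$. This makes the total dissipation infinite, contradicting the displayed summability, and completes the proof.

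I expect the main obstacle to be this last step, and specifically the two-sided control of the oscillation: one needs both a lower bound of order $1/\tau_n$ for the energy lost per cycle and an upper bound of order $n$ for $\tau_n$, so that the reciprocals still sum to infinity. The difficulty is that the damping coefficient $(N-1)/r$ decays, so the estimate is borderline and one cannot afford to lose the linear growth of the critical points; re-establishing the bounded spacing near the turning levels requires redoing the quantitative estimates of Proposition~\ref{basic2}. An alternative that organizes the same analysis is to apply Proposition~\ref{Pmm1} along a sequence $s_n\to\infty$ with $u_\lambda'(s_n)\to 0$ (such a sequence exists since $\int^\infty|u_\lambda'|^p\,r^{-1}\,dr<\infty$ forces $\liminf_{r\to\infty}|u_\lambda'|=0$), yielding a limit $u_\lambda^\infty$ that solves the autonomous equation with the \emph{constant} energy $\mathcal E_\lambda=F(u_\lambda^\infty(0))$; the desired conclusion $f(u_\lambda^\infty(0))=0$ again amounts to excluding a nonconstant periodic limiting orbit, which is the same analytic core.
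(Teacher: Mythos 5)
Your strategy is genuinely different from the paper's, and most of it is correct: the reduction via Corollary~\ref{coro31} when $\mathcal E_\lambda\ge 0$, the confinement of $u_\lambda$ to $(B,A)\setminus\{0\}$ when $\mathcal E_\lambda<0$, the use of Proposition~\ref{basic1}$(iv)$ in the eventually monotone case, the identities $F(m^*)=F(M^*)=\mathcal E_\lambda$ with $f(m^*)\le 0\le f(M^*)$, and the lower bound of order $1/\tau_n$ for the energy dissipated per crossing of $[c_1,c_2]$ are all sound. (For comparison: the paper avoids any case analysis by a pigeonhole argument on the dissipation identity over intervals $[kT,(k+1)T]$, which produces translates with $\int_0^T|u_\lambda'(s+n_kT)|^p\,ds\to 0$; Proposition~\ref{Pmm1} then yields a limit solution of the autonomous equation that is \emph{constant} on $[0,T]$, so $f(v_0)=0$ and $F(v_0)=\mathcal E_\lambda$ follow at once. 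Your route is instead the one the paper uses to prove Theorem~\ref{limexis}.)

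The gap is the claim that ``consecutive critical points stay within a bounded distance, exactly as in the proof of Proposition~\ref{basic2}'', which you justify only by transversality near the two turning values. That controls the time spent near $m^*$ and near $M^*$, but not the time needed to cross the middle region $[m^*+\eta,\,M^*-\eta]$. There the required lower bound on $|u_\lambda'|$ comes from $|u_\lambda'|^p/p'=E_\lambda-F(u_\lambda)\ge\mathcal E_\lambda-F(u_\lambda)$, which is useful only if $F$ stays \emph{strictly} below $\mathcal E_\lambda$ on that whole region; in Proposition~\ref{basic2} the analogous bound was free because there $E_\lambda\ge 0$ while $F<0$ on the crossed interval, but here it can fail. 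Indeed one only knows $F\le\mathcal E_\lambda$ on $(m^*,M^*)$ (each interior value is attained at points $r\to\infty$ where $F(u_\lambda)\le E_\lambda\to\mathcal E_\lambda$), and $F$ may touch the level $\mathcal E_\lambda$ at an interior point $s_0$; then the solution can linger arbitrarily long near $s_0$ (this is precisely the ``equilibrium at the limiting energy'' scenario that hypothesis~\eqref{ext} of Theorem~\ref{limexis} is designed to control), so $\tau_n=O(n)$ cannot be concluded, $\sum_n 1/\tau_n$ may converge, and no contradiction results. The fix is short and you should include it: if such an $s_0\in(m^*,M^*)$ with $F(s_0)=\mathcal E_\lambda$ exists, then $s_0$ is a local maximum of $F$ (because $F\le\mathcal E_\lambda$ near $s_0$), hence $f(s_0)=0$ and $\ell=s_0$ proves the proposition outright; otherwise $F<\mathcal E_\lambda$ on $(m^*,M^*)$, hence $F\le\mathcal E_\lambda-\delta'$ on the compact middle region, $|u_\lambda'|$ is bounded below there, the spacing is bounded, and your contradiction goes through. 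Note finally that your proposed alternative (Proposition~\ref{Pmm1} along points where $u_\lambda'(s_n)\to0$) leaves open, as you yourself say, exactly the same core difficulty; the paper's device of making the \emph{integral} of $|u_\lambda'|^p$ small on a whole interval, rather than $|u_\lambda'|$ small at a single point, is what forces the limit to be constant and bypasses the exclusion of nonconstant limiting orbits altogether.
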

\begin{proof} Let $T>0$ be arbitrary but fixed. Then
\ben
E_\lambda(k_0T)-\mathcal E_\lambda&=&(N-1)\int_{k_0T}^\infty\frac{|u'|^p}{t}\;dt\\
&=&(N-1)\sum_{k=k_0}^\infty\int_{k\,T}^{(k+1)\,T}\frac{|u'(t)|^p}{t}\;dt\\
&=&(N-1)\sum_{k=k_0}^\infty\int_{0}^{T}\frac{|u'(s+k\,T)|^p}{s+k\,T}\;ds\\
&\ge&(N-1)\sum_{k=k_0}^\infty\frac1{(k+1)\,T}\int_{0}^{T}|u'(s+k\,T)|^p\;ds\;.
\een
As the left hand side of this inequality is finite, it must be that
\[
\liminf_{k\to\infty}\int_{0}^{T}|u'(s+k\,T)|^p\;ds=0\;,
\]
hence there is a subsequence $\{n_k\}$ of natural numbers such that
\[
\lim_{k\to\infty}\int_{0}^{T}|u'(s+n_k\,T)|^p\;ds=0\;.
\]
{}From Proposition~\ref{Pmm1},
\[
v_k(r):=u(r+n_k\,T)
\]
has a subsequence, still denoted the same, such that
\[
\lim_{k\to\infty}v_k(r)=v(r)\quad\mbox{and}\quad \lim_{k\to\infty}v_k'(r)=v'(r)
\]
uniformly in compact intervals, where $v$ is a solution of
\[
(\phi_p(v'))'+f(v)=0\;.
\]

Hence,
\[
\int_0^T|v'(s)|^pds=0\;,
\]
implying that $v$ is a constant, say $v(r)\equiv v_0$. From the equation satisfied by $v$, $f(v_0)=0$. On the other hand,
\[
\frac{|v_k'(r)|^p}{p'}+F(v_k(r))=\frac{|u'(r+n_k\,T)|^p}{p'}+F(u(r+n_k\,T))=E_\lambda(r+n_k\,T)\to\mathcal E_\lambda
\]
as $k\to\infty$ and thus
\[
F(v_0)=\mathcal E_\lambda\;.
\]
\end{proof}

Although not necessary for the proof of our existence results in Theorem~\ref{Main} in our next result we give sufficient conditions for the limit of $u_\lambda(r)$ to exists as $r\to\infty$.\vs
\begin{thm}\label{limexis} Let $f$ satisfy $(\mathrm H1)$ through $(\mathrm H5)$, and assume furthermore that $f$ has only one positive zero at $a^+$ and only one negative zero at $b^-$. Then either
\[
\lim_{r\to\infty}u_\lambda(r)\quad\mbox{exists and equals either $a^+$ or $b^-$}\;,
\]
or $u_\lambda(r)\equiv 0$ for all $r\ge r_0$ for some $r_0>0$.

If $f$ has more than one positive or negative zero and if we assume that
\be\label{ext}
\int_{x_0}\frac{ds}{|F(s)-F(x_0)|^{1/p}}<\infty\quad\mbox{whenever $x_0$ is a local maximum of $F$}\;,
\ee
then $\lim_{r\to\infty}u_\lambda(r)$ exists and it is either a nonzero zero of $f$ or $u_\lambda(r)\equiv 0$ for all $r\ge r_0$, for some $r_0>0$. \end{thm}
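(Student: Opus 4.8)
The plan is to promote the convergence of the energy, already obtained in Proposition~\ref{basic3}, to convergence of $u_\lambda$ itself, by analysing the $\omega$-limit set of the phase-plane trajectory $r\mapsto(u_\lambda(r),u_\lambda'(r))$. First I would dispose of the degenerate regime. By Proposition~\ref{basic3}, $\mathcal E_\lambda=F(\ell)$ for some zero $\ell$ of $f$, and by $(\mathrm H4)$ this forces $\mathcal E_\lambda\le0$, with $\mathcal E_\lambda=0$ only when $\ell=0$. If $\mathcal E_\lambda=0$, then since $E_\lambda$ is nonincreasing (Proposition~\ref{basic1}$(i)$) we have $E_\lambda(r)\ge0$ for all $r$, so Corollary~\ref{coro31} gives a double zero $r_0$ with $u_\lambda\equiv0$ on $[r_0,\infty)$, which is the second alternative. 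Hence I may assume $\mathcal E_\lambda<0$. Then $u_\lambda$ cannot vanish for large $r$, since at a zero $E_\lambda=|u_\lambda'|^p/p'\ge0$ while $E_\lambda(r)<0$ once $r$ is large (as $E_\lambda$ decreases to $\mathcal E_\lambda<0$); so $u_\lambda$ has a fixed sign on some $[R,\infty)$, and replacing $u_\lambda$ by $-u_\lambda$ and $f$ by $\tilde f(s)=-f(-s)$ as in Lemma~\ref{compact_support}, I may assume $u_\lambda>0$ on $[R,\infty)$.

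Next I would introduce $\Omega$, the set of limit points of $(u_\lambda(r),u_\lambda'(r))$ as $r\to\infty$. By the bounds of Proposition~\ref{basic1}$(ii)$ it is nonempty, compact and connected; by Proposition~\ref{Pmm1} a full trajectory of the autonomous equation~\eqref{asym12} passes through every point of $\Omega$, so $\Omega$ is invariant under the autonomous flow; and since $E_\lambda\to\mathcal E_\lambda$ while the autonomous energy $|v|^p/p'+F(u)$ is conserved, $\Omega$ lies in the level set $\{(u,v):u\ge0,\ |v|^p/p'+F(u)=\mathcal E_\lambda\}$. Moreover $\Omega$ contains an equilibrium $(\ell,0)$ with $f(\ell)=0$ and $F(\ell)=\mathcal E_\lambda$, namely the constant limit constructed in the proof of Proposition~\ref{basic3}. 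As $\mathcal E_\lambda<0$ this $\ell$ is nonzero, and the conclusion $\lim_{r\to\infty}u_\lambda(r)=\ell$ is equivalent to $\Omega=\{(\ell,0)\}$.

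It then remains to exclude that $\Omega$ carries a non-constant orbit. On the level $\mathcal E_\lambda<0$, such an orbit, being bounded and contained in $\{u\ge0\}$, is either periodic (around a centre, i.e.\ a zero of $f$ where $F$ has a local minimum) or homoclinic/heteroclinic to saddles (local maxima of $F$ at the level $\mathcal E_\lambda$). In the first part of the statement there is a single positive zero $a^+$, a local minimum of $F$, so the only positive equilibrium is a centre and any orbit of $\Omega$ is periodic; here $\mathcal E_\lambda\ge F(a^+)$ follows from $E_\lambda\ge F(u_\lambda)\ge F(a^+)$, and a periodic orbit arises precisely when $\mathcal E_\lambda>F(a^+)$. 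In the general second part, saddle connections can occur, and this is exactly where~\eqref{ext} enters: it makes $\int du/\bigl(p'(F(x_0)-F(u))\bigr)^{1/p}$ convergent near a local maximum $x_0$, so a saddle is reached in finite $r$-length and every homoclinic/heteroclinic orbit of $\Omega$ is traversed in finite ``time''. In either case, if $\Omega$ carried a non-constant orbit then $u_\lambda$ would shadow it and oscillate between two distinct levels, with consecutive passages separated by a bounded $r$-increment $P$ (bounded just as the distance between consecutive zeros was bounded in the proof of Proposition~\ref{basic2}, with~\eqref{ext} controlling the pieces near the saddles). Writing $r_k\le r_0+kP$ and noting that the energy dissipated on each passage is at least $c/r_k$ for a fixed $c>0$ (the amplitude not vanishing), I would get
\[
E_\lambda(R)-\mathcal E_\lambda=(N-1)\int_R^\infty\frac{|u_\lambda'|^p}{t}\,dt\ge c\sum_k\frac1{r_0+kP}=\infty\;,
\]
contradicting Proposition~\ref{basic1}$(i)$. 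Hence $\Omega=\{(\ell,0)\}$, $u_\lambda\to\ell$, and by Proposition~\ref{basic1}$(iv)$ also $u_\lambda'\to0$; in the first part $\ell\in\{a^+,b^-\}$ with $\mathcal E_\lambda=F(a^+)$ (resp.\ $F(b^-)$), and in the second part $\ell$ is a nonzero zero of $f$.

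The main obstacle is precisely this last paragraph: excluding the saddle-connection orbits and showing that the oscillation ``period'' stays bounded, so that the dissipated energy sums to $+\infty$. The periodic case is handled by the elementary length estimates of Proposition~\ref{basic2} (where $f$ is bounded away from $0$), but near a saddle $f$ vanishes and those estimates fail; it is exactly the quantitative integrability~\eqref{ext} that restores a finite traversal time there and makes the dissipation argument go through.
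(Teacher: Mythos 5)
Your proposal is correct, and at its core it runs on the same engine as the paper's proof: the limiting autonomous equation supplied by Proposition~\ref{Pmm1}, the identification $\mathcal E_\lambda=F(\ell)$ from Proposition~\ref{basic3}, and, decisively, the contradiction obtained by showing that persistent oscillation has a bounded period (with \eqref{ext} controlling the passages near local maxima of $F$, where $f$ and $u_\lambda'$ are simultaneously small), so that the dissipated energy $(N-1)\int^\infty |u_\lambda'|^p\,t^{-1}dt\ge c\sum_k (r_0+kP)^{-1}$ diverges; this dissipation-sum argument is exactly the paper's workhorse, appearing in Proposition~\ref{basic2}, in case $(ii)$ of Claim~1, and in Claim~2. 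What you do differently is the packaging: you organize the case analysis through the $\omega$-limit set of the phase-plane trajectory (compact, connected, invariant, contained in the level set of the autonomous Hamiltonian at height $\mathcal E_\lambda$) and then classify its orbits as equilibria, periodic orbits, or saddle connections, whereas the paper works directly with $\liminf$/$\limsup$ and monotone sequences of extrema, splitting according to whether $\mathcal E_\lambda$ is a relative minimum or maximum of $F$. Your framing buys uniformity and a cleaner dispatch of the compactly supported alternative via Corollary~\ref{coro31}; the paper's framing buys economy in the first part, where no dissipation estimate is needed at all, since positivity plus Proposition~\ref{basic3} force $\mathcal E_\lambda\in\{0,F(a^+)\}$ and the monotone sequences of extrema then give $\ell_1=\ell_2=a^+$ directly. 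Indeed, in your own setup the first-part dissipation argument is redundant: $\Omega$ contains an equilibrium at level $\mathcal E_\lambda$, but when $a^+$ is the only positive zero of $f$ there is no equilibrium in $\{u\ge0\}$ at any level in $(F(a^+),0)$, so $\mathcal E_\lambda=F(a^+)$ and the level set in $\{u\ge0\}$ reduces to the single point $(a^+,0)$. One small caution if you write your version out: ``invariance under the autonomous flow'' should be stated as ``through every point of $\Omega$ passes a complete orbit of \eqref{asym12} contained in $\Omega$,'' since under \eqref{ext} uniqueness for the autonomous equation fails precisely at the saddle points (Proposition~\ref{unique-ext}); this weaker statement is what Proposition~\ref{Pmm1} actually yields and it suffices for your classification.
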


\begin{proof} We first give the proof for the case $f$ has only one positive zero at $a^+$ and only one negative zero at $b^-$.

By Proposition~\ref{basic2} we can assume without loss of generality that $u_\lambda$ remains positive for $r> r_0$, for some $r_0>0$. If $u_\lambda$ has only a finite number of critical points, then it is eventually monotone and thus it converges as $r\to\infty$. Then the result follows from Proposition~\ref{basic1}$(iv)$.

Hence we are left with the case in which $u_\lambda$ has an infinite sequence of maxima at $\{r_n^M\}$ and an infinite sequence of minima at $\{r_n^m\}$, with both $u_\lambda(r_n^M), \ u_\lambda(r_n^m)>0$. From the equation, the maxima occur with $f(u_\lambda(r_n^M))\ge0$, hence $u_\lambda(r_n^M)>a^+$ (strict inequality due to Proposition~\ref{unique-ext} in Appendix~\ref{Section:Appendix}) and for the same reason, the minima occur with $f(u_\lambda(r_n^m))\le0$ with $u_\lambda(r_n^m)<a^+.$

As $E_\lambda$ is decreasing, we must have that $u_\lambda(r_n^m)$ increases (thus $u_\lambda(r_n^m)$ is bounded away from $0$) to a positive limit $\ell_1\in(0,a^+]$,  and $u_\lambda(r_n^M)$ decreases to a limit $\ell_2\in[a^+,A]$. Moreover,
\[
\liminf_{r\to\infty}u_\lambda(r)=\lim_{n\to\infty}u_\lambda(r_n^m)=\ell_1\;,\quad\mbox{and}\quad\limsup_{r\to\infty}u_\lambda(r)=\lim_{n\to\infty}u_\lambda(r_n^M)=\ell_2\;.
\]
Thus $E_\lambda(r_n^m)=F(u_\lambda(r_n^m))\to F(\ell_1)$ and $E_\lambda(r_n^M)=F(u_\lambda(r_n^M))\to F(\ell_2)$, implying $0\not=F(\ell_1)=F(\ell_2)$.

{}From Proposition~\ref{basic3}, $\lim_{r\to\infty}E_\lambda(r)$ is either $F(0)=0$ or $F(a^+)$. Since $0\not=F(\ell_1)$, the limit must be $F(a^+)$, and thus $F(\ell_1)=F(\ell_2)=F(a^+)$, and the only possibility is that $\ell_1=\ell_2=a^+$ proving the first part of the theorem.
\vs
In order to prove the second part of the theorem, for simplicity we consider $f$ with three positive zeros $u_1$, $u_2$ and $u_3$, but the arguments clearly hold for the general case. In this case $F$ has two minimum points at $u_1$ and $u_3$, and one maximum point at $u_2$, and the limit of the energy can be any of the three values $F(u_1)$, $F(u_3)$ or $F(u_2)$.

\medskip\noindent\emph{Claim 1:} If $\mathcal E_\lambda$ is a relative minima of $F$, then the solution $u_\lambda$ converges as $r\to\infty$.\smallskip

For the relative minima there are two cases: $F(u_1)=F(u_3)$ and $F(u_1)>F(u_3)$.

\begin{figure}[ht]
\begin{center}
\includegraphics[keepaspectratio, width=7cm]{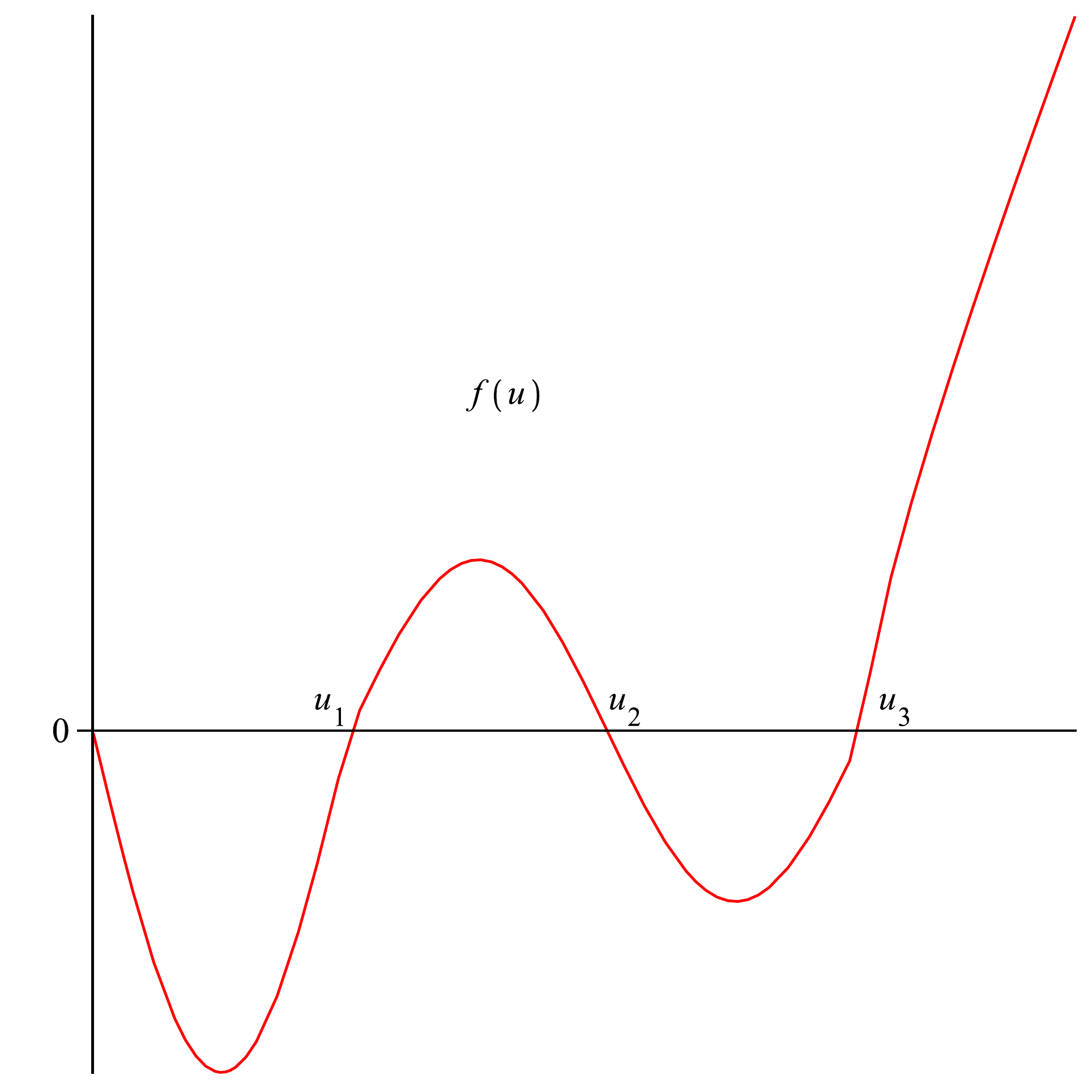}\quad \includegraphics[keepaspectratio, width=7cm]{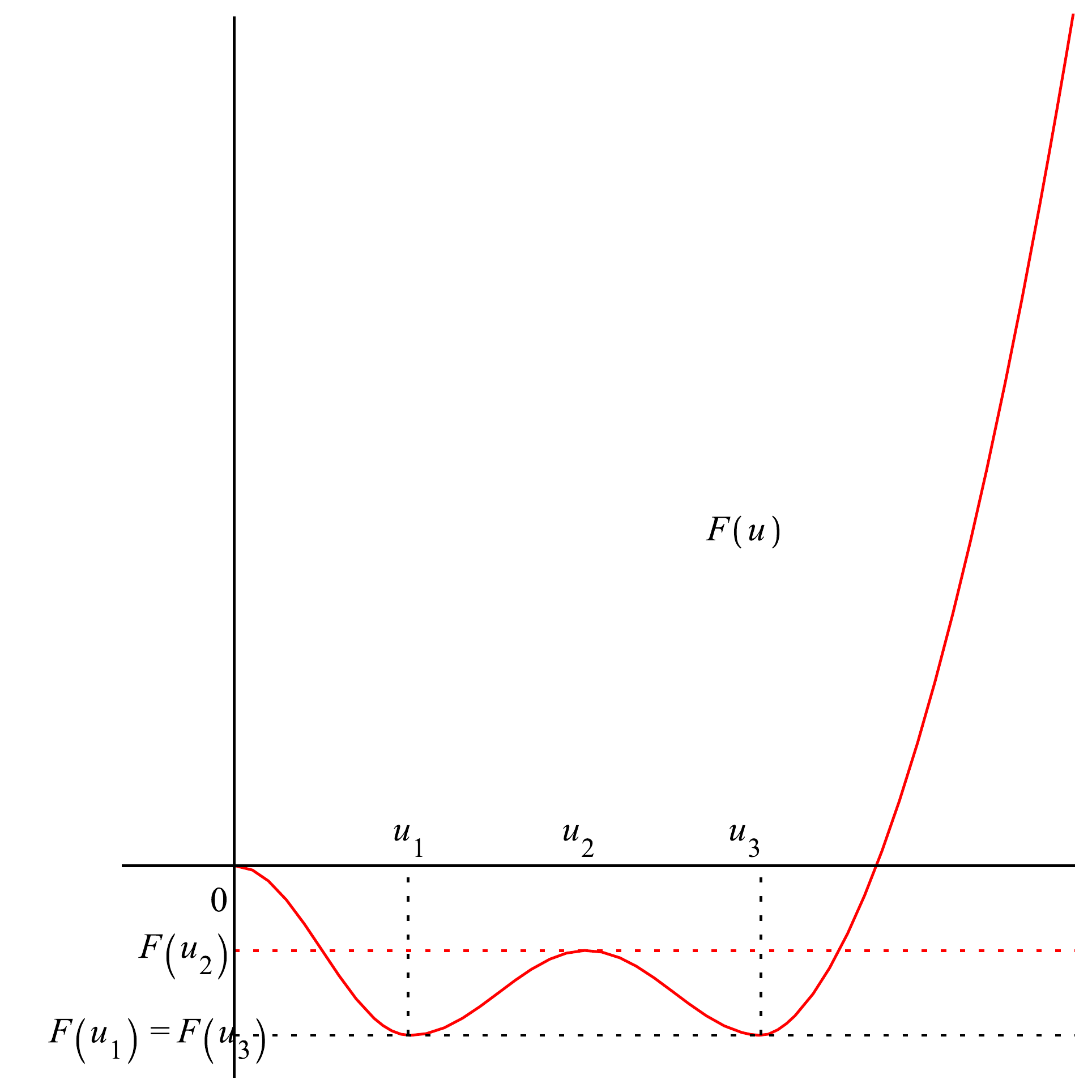}
\end{center}
\caption{Case of $f$ with three positive zeros and $F(u_1)=F(u_3).$}
\end{figure}

In the first case (shown in Figure 1), we can prove that if $E_\lambda$ converges to $L=F(u_1)=F(u_3)$, then the solution $u_\lambda$ either converges to $u_1$ or it converges to $u_3$. Indeed, we can assume that $u_\lambda(r)>0$ for $r\ge r_0$. If $u_\lambda$ has an infinite sequence of minima at $\{r_n^m\}$ and an infinite sequence of maxima at $\{r_n^M\}$ (tending to infinity), then by setting
\[
\ell_1=\liminf_{r\to\infty}u_\lambda(r)=\lim_{n\to\infty}u_\lambda(r_n^m)\;,\quad\ell_2=\limsup_{r\to\infty}u_\lambda(r)=\lim_{n\to\infty}u_\lambda(r_n^M)\;,
\]
we must have that
\[
F(\ell_1)=F(\ell_2)=L=F(u_1)\;,
\]
so if $\ell_1\not=\ell_2$, then $\ell_1=u_1$ and $\ell_2=u_3$. But then the solution $u_\lambda$ crosses the value $u_2$ at an infinite sequence $\{r_{2,n}\}$ tending to infinity and
\[
F(u_1)=\lim_{n\to\infty}E_\lambda(r_{2,n})=\lim_{n\to\infty}\frac{|u_\lambda'(r_{2,n})|^p}{p'}+F(u_2)\;,
\]
implying that
\[
\lim_{n\to\infty}\frac{|u_\lambda'(r_{2,n})|^p}{p'}=F(u_1)-F(u_2)<0\;,
\]
which is a contradiction. Hence $\ell_1=\ell_2$ and the claim follows.

The second case is a little more involved. The following two cases may occur:
\[
\mbox{(a)\quad}\lim_{r\to\infty}E_\lambda(r)=F(u_3)\quad\mbox{or}\quad\mbox{(b)\quad} \lim_{r\to\infty}E_\lambda(r)=F(u_1)\;.
\]

\begin{figure}[ht]
\begin{center}
\includegraphics[keepaspectratio, width=7cm]{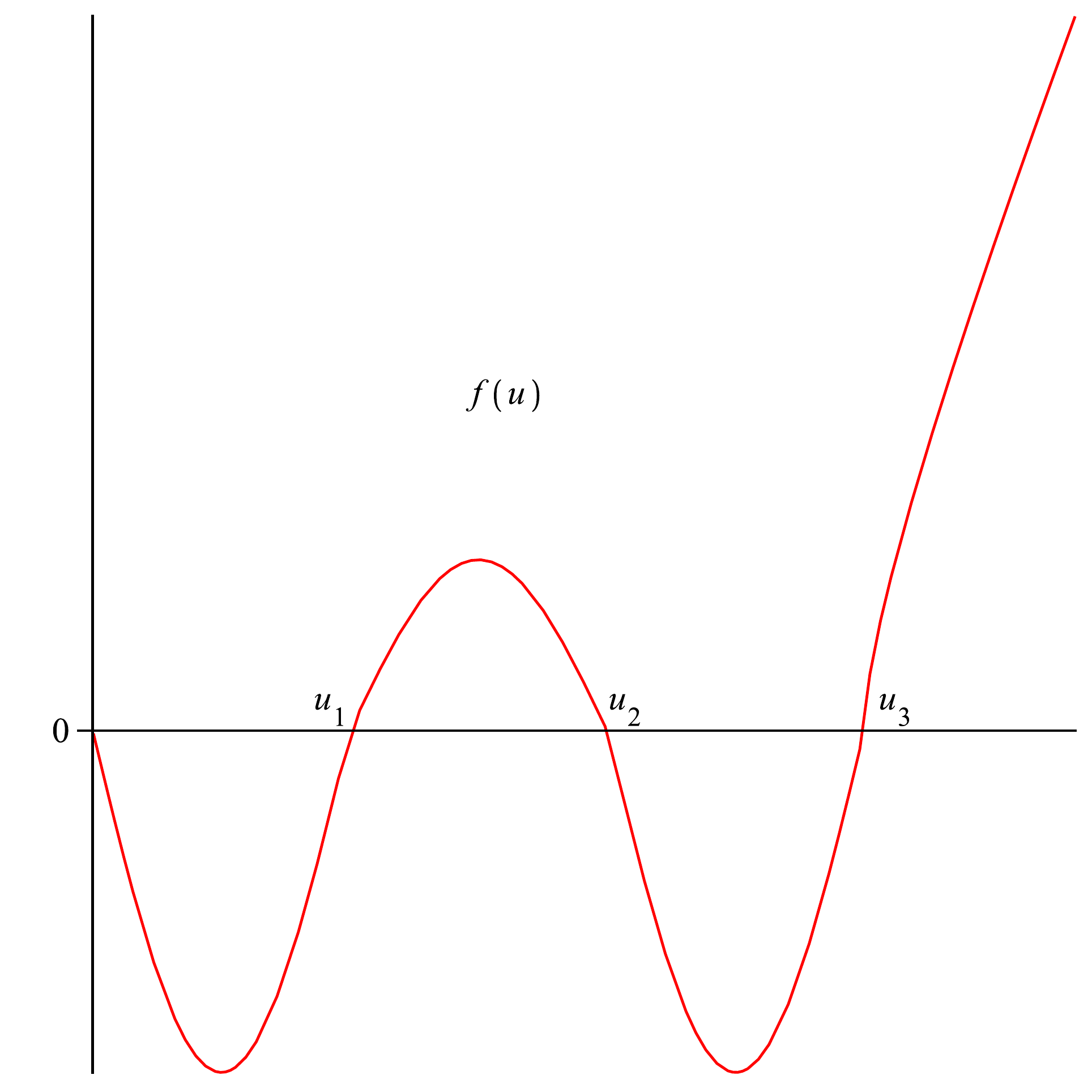}\quad \includegraphics[keepaspectratio, width=7cm]{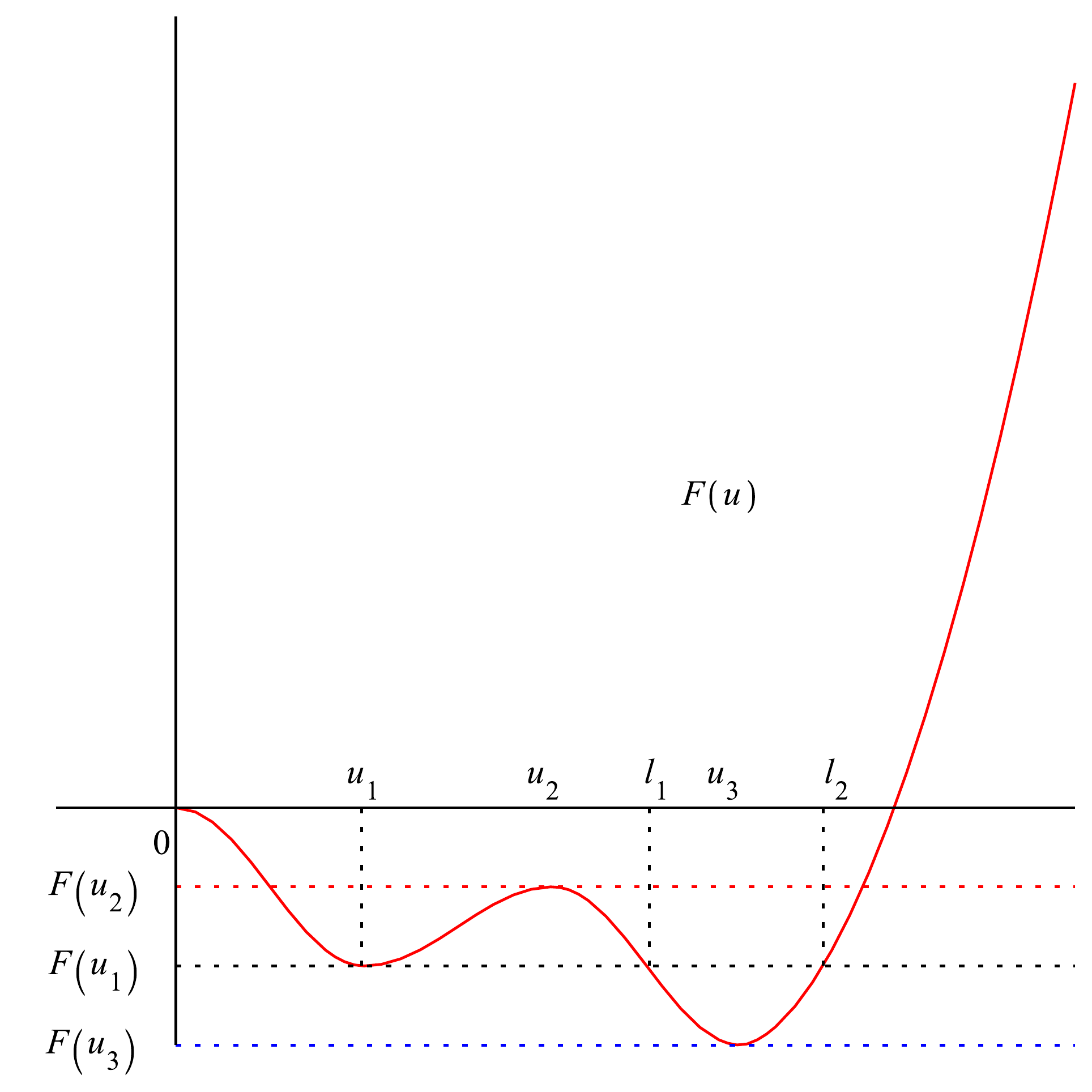}
\end{center}
\caption{Case of $f$ with three positive zeros and $F(u_1)>F(u_3).$}
\end{figure}

The case $(a)$ is simple because in this case $F(\ell_1)=F(\ell_2)=L=F(u_3)$ and the only possibility is that $\ell_1=\ell_2=u_3$.

In the second case we claim that $\ell_1=\ell_2=u_1$. If this is not true, then there are two possibilities: $(i)$ $\ell_1=u_1$ and $\ell_2$ as in Figure 2, or $(ii)$ $\ell_1$ and $\ell_2$ are as in the same figure. The first case is simple because again the solution $u_\lambda$ must cross the value $u_2$ at an infinite sequence $\{r_{2,n}\}$ tending to infinity and we arrive to the same contradiction as above.

For the second case, we proceed as in the proof of Proposition~\ref{basic2} and prove that the distance between any two consecutive critical points is bounded above. We set
\[
b_1=\frac{\ell_1+u_3}{2}\;,\quad b_2=\frac{\ell_2+u_3}{2}\;,
\]
and
let $r_{1,n}\in(r_n^m,r_n^M)$ be the unique point where $u_\lambda(r_{1,n})=b_1$, and $r_{2,n}\in(r_n^m,r_n^M)$ be the unique point where $u_\lambda(r_{2,n})=b_2$. See Figure 3.
\begin{figure}[ht]
\begin{center}
\includegraphics[keepaspectratio, width=10cm]{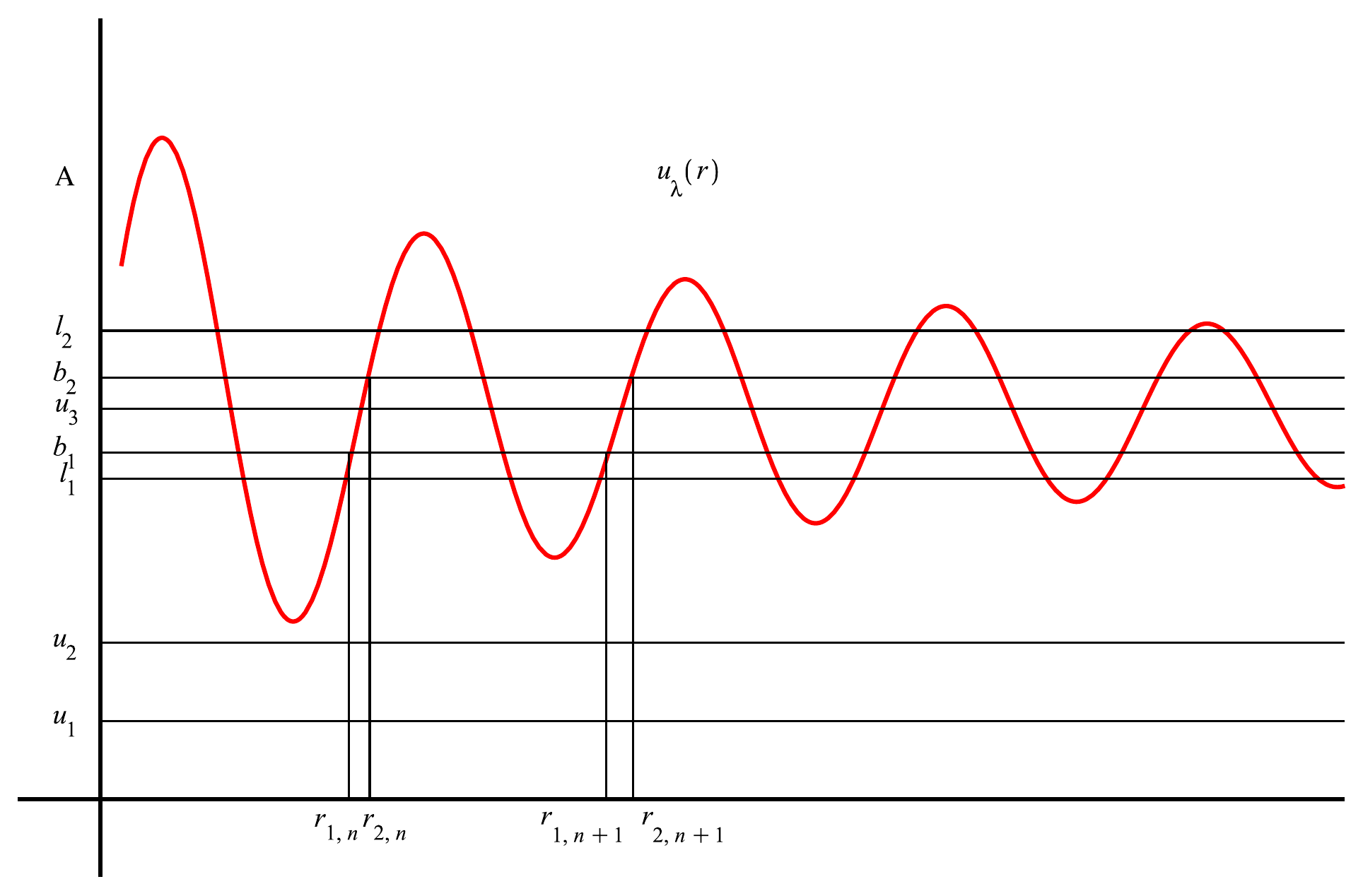}
\end{center}
\caption{Definition of the points $r_{1,n},$ $r_{2,n}.$}
\end{figure}

As the sequence $\{u_\lambda(r_n^m)\}$ increases to $\ell_1$, we may assume that $u_\lambda(r_n^m)\ge(u_2+\ell_1)/2$, and thus, $|f(u_\lambda(r))|$ is bounded below by some positive constant $c_1$ for all $r\in[r_n^m,r_{1,n}]$.
{}From the equation we have that for $r\in[r_n^m,r_{1,n}]$,
\ben
|(\phi_p(u_\lambda'))'(r)|&=&\Bigm|\frac{(N-1)}{r}\,\phi_p(u_\lambda'(r))+f(u_\lambda(r))\Bigm|\\
&\ge&|f(u_\lambda(r))|-\frac{(N-1)}{r}\,\phi_p(C_{\lambda})\\
&\ge&c_1-\frac{(N-1)}{r}\,\phi_p(C_{\lambda})\\
&\ge& \frac{c_1}{2}\quad\mbox{for all $r\ge\dis\frac{2\,(N-1)\,\phi_p(C_{\lambda})}{c_1}$}\;.
\een
Hence, choosing $n_0$ such that $r_n^m\ge \frac{2\,(N-1)\,\phi_p(C_{\lambda})}{c_1}$ for all $n\ge n_0$, we have that
\[
|(\phi_p(u_\lambda'))'(r)|\ge \frac{c_1}{2}\quad\mbox{for all }r\in[r_n^m,r_{1,n}]
\]
and therefore
\[
\phi_p(C_{\lambda})\ge \phi_p(u_\lambda'(r_{1,n}))-\phi_p(u_\lambda'(r_n^m))=(\phi_p(u_\lambda'))'(\xi)(r_{1,n}-r_n^m)\ge\frac{c_1}{2}\,(r_{1,n}-r_n^m)
\]
implying that
\be\label{m1}
r_{1,n}-r_n^m\le \frac{2\,\phi_p(C_{\lambda})}{c_1}\;.
\ee
Similarly, for $r\in[r_{2,n},r_n^M]$, using now that in this interval $f(u_\lambda(r))$ is bounded from below by a positive constant $c_2$, we conclude that there is $n_1\ge n_0$ such that
\be\label{m2}
r_n^M-r_{2,n}\le \frac{2\,\phi_p(C_{\lambda})}{c_2}
\ee
for all $n\ge n_1$.

Finally we estimate $r_{2,n}-r_{1,n}$. In the interval $[r_{1,n},r_{2,n}]$, $u_\lambda(r)\in[b_1,b_2]$ and $F(u_\lambda(r))\le max\{F(b_1),F(b_2)\}<F(u_1)$, hence there exists a positive constant $c_3$ such that
\[
F(u_1)-F(u_\lambda(r))\ge c_3\;,
\]
hence, using that $E_\lambda$ decreases to $F(u_1)$, we have that
\[
|u_\lambda'(r)|\ge (p'\,c_3)^{1/p}\;.
\]
Integrating this last inequality over $[r_{1,n},r_{2,n}]$, we obtain that
\be\label{m3}
r_{2,n}-r_{1,n}\le \frac{b_2-b_1}{(p'\,c_3)^{1/p}}\;.
\ee
Hence, from \eqref{m1}, \eqref{m2} and \eqref{m3}, we conclude that for all $n\ge n_1$
\[
r_n^M-r_n^m\le T
\]
where
\[
T=\frac{2\,\phi_p(C_{\lambda})}{c_2}+\frac{b_2-b_1}{(p'\,c_3)^{1/p}}+\frac{2\,\phi_p(C_{\lambda})}{c_1}\;.
\]

Again, from the mean value theorem
\[
b_2-b_1\le C_{\lambda}\,(r_{2,n}-r_{1,n})\;,
\]
hence, as before, we obtain the contradiction
\ben
F(\lambda)-F(u_1)>E_\lambda(r_{n_1}^m)-E_\lambda(\infty)&=&(N-1)\int_{r_{n_1}^m}^\infty\frac{|u_\lambda'(t)|^p}{t}\;dt\\
&\ge&(N-1)\sum_{k=n_1}^\infty\int_{r_{1,k}}^{r_{2,k}}\frac{|u_\lambda'(t)|^p}{t}\;dt \\
&\ge&(N-1)\sum_{k=n_1}^\infty p'\,c_3\,(r_{2,k}-r_{1,k})\frac1{r_{2,k}}\\
&\ge& p'\,c_3\, \frac{b_2-b_1}{C_{\lambda}}\sum_{k=n_1}^\infty \frac1{r_{2,k}}=\infty\;.
\een
Therefore, case $(ii)$ cannot happen and Claim 1 follows.

\medskip\noindent\emph{Claim 2:} If $\mathcal E_\lambda$ is a relative maxima of $F$, then the solution $u_\lambda$ converges as $r\to\infty$.\par\smallskip\noindent See Figure 4.
\begin{figure}[ht]
\begin{center}
\includegraphics[keepaspectratio, width=8cm]{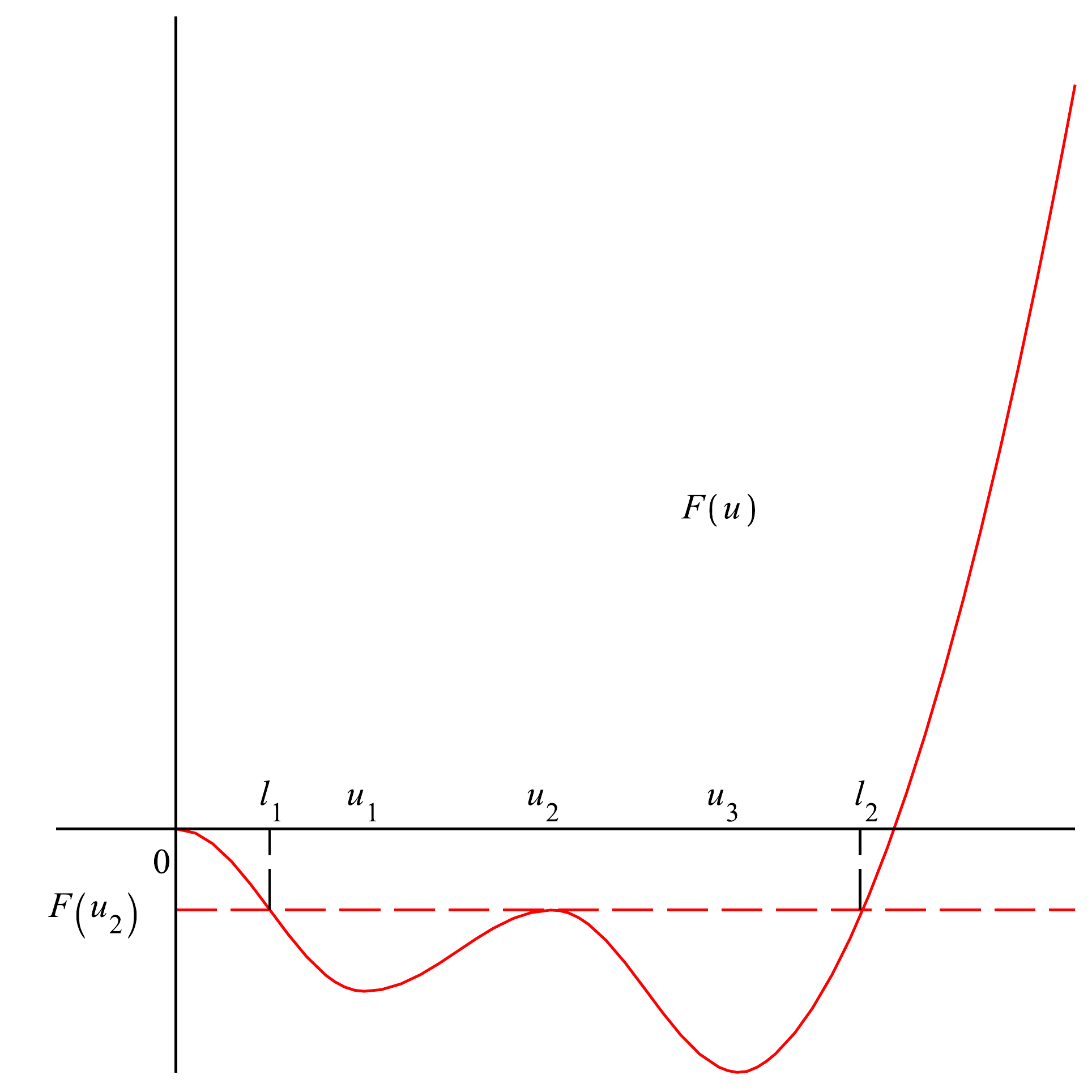}
\end{center}
\caption{If $\ell_1\not=\ell_2$, then $0<\ell_1<u_1$ and $u_3<\ell_2<A$.}
\end{figure}

{}From \eqref{ext},
\be\label{extra}
\int_{u_2}\frac{du}{|F(u_2)-F(u)|^{1/p}}\quad\mbox{is convergent}\;.
\ee
Then the same arguments used in the proof above can be used to establish the convergence of $u_\lambda$. Indeed, we let
\[
r_{1,n}: \quad u_\lambda(r_{1,n})=\frac{\ell_2+u_3}{2},\quad r_{2,n}:\quad u_\lambda(r_{2,n})=\frac{u_2+u_3}{2}\;,
\]
and for $r\in[r_{1,n},r_{2,n}]$, we have
$u_\lambda(r)\in[\frac{u_2+u_3}{2},\frac{\ell_2+u_3}{2}]$ and thus
\[
|u_\lambda'(r)|\ge (p')^{1/p}\,(F(u_2)-F(u_\lambda(r))\ge c_0>0
\]
for some positive constant $c_0$ implying that
\[
\frac{\ell_2-u_2}{2}\ge c_0\,(r_{2,n}-r_{1,n})
\]
and similarly, by setting
\[
\bar r_{1,n}: \quad u_\lambda(\bar r_{1,n})=\frac{u_1+u_2}{2}\;,\quad\bar r_{2,n}:\quad u_\lambda(\bar r_{2,n})=\frac{u_2+\ell_1}{2}\;,
\]
we have that
\[
\frac{u_2-\ell_1}{2}\ge c_0\,(\bar r_{2,n}-\,\bar r_{1,n})\;.
\]
For $r\in[r_{1,n},\bar r_{1,n}]$ we use \eqref{extra} to obtain that
\[
(p')^{1/p}\,(\bar r_{1,n}-r_{1,n})\le\int_{\frac{u_1+u_2}{2}}^{\frac{u_3+u_2}{2}}\frac{du}{|F(u_2)-F(u)|^{1/p}}\;.
\]
The bounds for $r_{2,n}-r_n^M$ and $r_{n+1}^m-\,\bar r_{2,n}$ is obtained as above using that $|f(u)|$ is bounded below in those intervals and using the equation to obtain
\[
\phi_p(C_\lambda)\ge c_0\,(r_{2,n}-r_n^M)\quad\mbox{and}\quad \phi_p(C_\lambda)\ge c_0\,(r_{n+1}^m-\,\bar r_{2,n})
\]
for some positive constant $c_0$.

We conclude that the distance between two consecutive critical points is bounded and we end the argument as we did at the end of Proposition~\ref{basic2}.
\end{proof}

\section{A change of coordinates and a lower bound on the angular velocity in the phase space}\label{Section:Coordinates}

In this section, we reformulate the problem in the phase space associated to the Hamiltonian system obtained in the ($p$)-linear case (that is, for $f(u)=|u|^{p-2}\,u$) in the asymptotic regime corresponding to $r\to\infty$. By computing a lower bound on the angular velocity around the origin, this will allow us to estimate the number of sign changes of the solutions, see
Section~\ref{Section:Existence}. First, let us explain how to change coordinates.

\medskip Setting $v=\phi_p(u')$, or equivalently $u'=\phi_{p'}(v)$, problem~(\ref{ivp}) is equivalent to the following first order system.
\beq\label{Flow}
\left\{\begin{array}{l} u'=\phi_q(v)\;,\cr\cr
v'=-\frac{N-1}r\,v-f(u)\;,\cr\cr
u(0)=\lambda\;,\quad v(0)=0\;.\end{array}\right.
\eeq
Here $q=p'$ stands for the H\"older conjugate of $p$. We consider also the auxiliary problem
\vs
\[\left\{
\begin{array}{l}
\dis\frac{dx}{dt}=-\phi_q(y)\;,\cr\cr
\dis\frac{dy}{dt}=\phi_p(x)\;,\cr\cr
x(0)=1\;,\quad y(0)=0\;. \end{array}
\right.\]
\vs
The auxiliary problem describes the asymptotics of \eqref{Flow} as $r\to\infty$, that is, when the $\frac{N-1}r\,v$ term is neglected in case of a ($p$)-linear function $f(u)=|u|^{p-2}\,u$. It is well known, see \cite{DEM}, that solutions to this last systems are $2\,\pi_p=2\,\pi_q$ periodic. Furthermore, with the notation of \cite{DEM}, we can define
\[
\sin_q(t):=y(t)\quad\text{and}\quad\cos_q(t):=x(t)=\phi_q\left(\dis\frac d{dt}\,\sin_q(t)\right)\;.
\]
It is immediate to check that
\[
\Phi_p(\cos_q(t))+\Phi_q(\sin_q(t))=\dis\frac1{p}\quad \text{for all}\quad t\in \R\;.
\]
To the $(u,v)$ coordinates of the phase plane, we assign \emph{generalized polar coordinates} $(\rho,\theta)$ by writing
\be\label{uvxy}
\left\{\begin{array}{l}
u=\rho^{\frac 1p}\cos_q(\theta)\cr\cr
v=\rho^{\frac 1q}\sin_q(\theta) \end{array}\right.
\ee
where
\[
\rho = p\,\left[\Phi_p(u)+\Phi_q(v)\right]\;.
\]
Notice that in case $p=q=2$, $(\sqrt\rho,\theta)$ are the usual polar coordinates of $(u,v)$, and $\cos_q$ and $\sin_q$ are the usual $\cos$ and $\sin$ functions.

\medskip Now, if $(u(r),v(r))$ denotes a solution to~(\ref{Flow}) and if we define the corresponding polar functions $r\mapsto\rho(r)$ and $r\mapsto\theta(r)$, then it turns out by direct computation that $(\rho,\theta)$ satisfies the following system of equations :
\begin{equation}\label{Syst:rho-theta}
\left\{\begin{array}{l}
\rho' = p\,\phi_q(v)\,\left[\phi_{p}(u)-f(u)-\frac{N-1}r\, v\right]\;,\cr\cr
\theta'=-\frac1\rho\left[ p\,\Phi_q(v)+u\,f(u)+\frac{N-1}r\,u\,v \right]\;,\cr\cr
\rho(0)=\lambda^p\;,\quad \theta(0)=0\;.\end{array}\right.
\end{equation}
We will denote by $(\rho_\lambda,\theta_\lambda)$ the solution of \eqref{Syst:rho-theta}.
\vs
The following lemma is a key step for our main result. We establish a lower bound on the angular velocity $|\theta'|$ around the origin, which will later allow us to estimate the number of sign changes of $u$ by counting the number of rotations of the solutions around the origin, in the phase plane. In order to formulate the lemma, we begin by noticing that from $(\mathrm H5)$, given $\omega\in(0,1/8)$ there is $s_0>0$ such that
\[
|f(s)|\ge4\,\omega\,|s|^{p-1}\quad\mbox{for all $|s|\ge s_0$\;}\;.
\]
\begin{lemma}[\rm Rotation Lemma]\label{AngularVelocity} With the previous notation, let assumptions $(\mathrm H1)$ through $(\mathrm H5)$ be satisfied and let $(\rho_\lambda, \theta_\lambda)$ be the generalized polar coordinates of a solution $(u_\lambda, v_\lambda)$ to~(\ref{Flow}). Set
\[
r_0:=\frac{2\,(N-1)}{\omega\,(p-1)^{1/q}}\;,\quad\sigma_0\ge\max\left\{2^{1/p}s_0\,,\;\Big(4\,\sup_{x\in[-s_0,s_0]}|f(x)|\Big)^{1/(p-1)}\,\right\}\;.
\]
Then, if $r\ge r_0$ and $\rho_\lambda\ge\sigma_0^p$, it holds that
\[
\theta_\lambda'(r)<-\,\omega\;.
\]
\end{lemma}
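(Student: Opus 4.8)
The plan is to turn the conclusion into a single scalar inequality and then split the estimate into a dominant ``main term'' and a small ``cross term.'' From the second equation in~\eqref{Syst:rho-theta} we have $\theta_\lambda'=-\tfrac1{\rho_\lambda}\,\mathcal N(r)$ with
\[
\mathcal N(r):=p\,\Phi_q(v_\lambda)+u_\lambda\,f(u_\lambda)+\frac{N-1}{r}\,u_\lambda\,v_\lambda\;,
\]
and, since $\Phi_p(s)=\tfrac1p|s|^p$ and $\Phi_q(s)=\tfrac1q|s|^q$,
\[
\rho_\lambda=p\,[\Phi_p(u_\lambda)+\Phi_q(v_\lambda)]=|u_\lambda|^p+\tfrac pq\,|v_\lambda|^q>0\;,\qquad p\,\Phi_q(v_\lambda)=\tfrac pq\,|v_\lambda|^q\;.
\]
Thus proving $\theta_\lambda'(r)<-\omega$ is equivalent to proving $\mathcal N(r)>\omega\,\rho_\lambda$. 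I would bound from below, separately, the main term $\mathcal M(r):=\tfrac pq|v_\lambda|^q+u_\lambda f(u_\lambda)$ by $2\,\omega\,\rho_\lambda$, and the cross term by $-\,\omega\,\rho_\lambda$ (strictly), and then add.

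For the cross term I would use the representation~\eqref{uvxy}, which gives $u_\lambda v_\lambda=\rho_\lambda^{1/p+1/q}\cos_q(\theta_\lambda)\sin_q(\theta_\lambda)=\rho_\lambda\,\cos_q(\theta_\lambda)\sin_q(\theta_\lambda)$, together with the identity $\Phi_p(\cos_q)+\Phi_q(\sin_q)=\tfrac1p$. By Young's inequality $|\cos_q\sin_q|\le\tfrac1p|\cos_q|^p+\tfrac1q|\sin_q|^q=\tfrac1p$, so $|u_\lambda v_\lambda|\le\rho_\lambda/p$. Hence, for $r\ge r_0$,
\[
\frac{N-1}{r}\,|u_\lambda v_\lambda|\le\frac{N-1}{r_0}\,\frac{\rho_\lambda}{p}=\frac{\omega\,(p-1)^{1/q}}{2p}\,\rho_\lambda<\omega\,\rho_\lambda\;,
\]
where the value of $r_0$ is used in the middle equality and the final strict inequality rests on the elementary bound $(p-1)^{1/q}\le\max\{1,p-1\}<2p$, valid for every $p>1$.

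For the main term I would distinguish two regimes according to the size of $|u_\lambda|$ relative to $s_0$, after enlarging $s_0$ (which is harmless for its defining inequality) so that, by property~$(i)$, $f(s)$ has the sign of $s$ for $|s|\ge s_0$. When $|u_\lambda|\ge s_0$, the choice of $s_0$ gives $u_\lambda f(u_\lambda)=|u_\lambda|\,|f(u_\lambda)|\ge4\,\omega\,|u_\lambda|^p$, and since $4\,\omega<1$,
\[
\mathcal M(r)\ge \tfrac pq|v_\lambda|^q+4\,\omega\,|u_\lambda|^p\ge4\,\omega\,\rho_\lambda\;.
\]
When $|u_\lambda|<s_0$, I would use $\rho_\lambda\ge\sigma_0^p\ge2\,s_0^p$ (from $\sigma_0\ge2^{1/p}s_0$) to get $\tfrac pq|v_\lambda|^q=\rho_\lambda-|u_\lambda|^p>\rho_\lambda/2$, and bound $|u_\lambda f(u_\lambda)|\le s_0\,\sup_{[-s_0,s_0]}|f|\le\rho_\lambda/4$ using $\sigma_0\ge(4\sup_{[-s_0,s_0]}|f|)^{1/(p-1)}$ together with $s_0<\sigma_0\le\rho_\lambda^{1/p}$; this yields $\mathcal M(r)>\rho_\lambda/2-\rho_\lambda/4=\rho_\lambda/4>2\,\omega\,\rho_\lambda$, the last step because $\omega<1/8$.

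Combining the two estimates gives $\mathcal N(r)\ge\mathcal M(r)-\tfrac{N-1}{r}|u_\lambda v_\lambda|>2\,\omega\,\rho_\lambda-\omega\,\rho_\lambda=\omega\,\rho_\lambda$, hence $\theta_\lambda'(r)<-\omega$. I expect the main obstacle to be the small-$u$ regime: there $u_\lambda f(u_\lambda)$ may be negative and must be absorbed by $\tfrac pq|v_\lambda|^q$, which is precisely why $\sigma_0$ is defined through both $2^{1/p}s_0$ and $\sup_{[-s_0,s_0]}|f|$. A secondary delicate point is securing the strict inequality in the cross-term bound, which is what forces the (elementary but necessary) verification that $(p-1)^{1/q}<2p$.
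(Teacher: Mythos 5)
Your proof is correct and is essentially the paper's own argument written in unnormalized form: dividing your target inequality $\mathcal N(r)>\omega\,\rho_\lambda$ by $\rho_\lambda$ turns your main term $\tfrac pq\,|v_\lambda|^q+u_\lambda f(u_\lambda)$ and your cross term into exactly the paper's $1-|x|^p+x\,f(\sigma\,x)/\sigma^{p-1}$ and $\tfrac{N-1}{r}\,u\,v/\sigma^p$ (with $x=\cos_q\theta_\lambda$, $\sigma=\rho_\lambda^{1/p}$), and your case split $|u_\lambda|\ge s_0$ versus $|u_\lambda|<s_0$ mirrors the paper's case analysis, using the two constraints defining $\sigma_0$ in the same way. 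The only minor deviations are that you bound the cross term via Young's inequality $|u_\lambda v_\lambda|\le\rho_\lambda/p$ (whence your extra check $(p-1)^{1/q}<2p$), whereas the paper bounds it by $\rho_\lambda\,(p-1)^{-1/q}$ so that $r\ge r_0$ gives exactly $\omega/2$, and that you make explicit the sign information $s\,f(s)>0$ for $|s|\ge s_0$, which the paper uses implicitly when invoking $(\mathrm H5)$.
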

\begin{proof} We start by observing that with the above notation, \emph{i.e.}~$x=\cos_q(\theta)$,
\ben
-\,\theta'&=&\Bigl(1-|x|^p+\frac{x\,f(\sigma\,x)}{\sigma^{p-1}}\Bigr)+\frac{N-1}{r}\frac{u\,v}{\sigma^p}\nonumber\\
&\ge& \Bigl(1-|x|^p+\frac{x\,f(\sigma\,x)}{\sigma^{p-1}}\Bigr)-\frac{N-1}{r}\,|x|\,\Bigl(\frac{1-|x|^p}{p-1}\Bigr)^{1/q}\nonumber\\
&\ge& \Bigl(1-|x|^p+\frac{x\,f(\sigma\,x)}{\sigma^{p-1}}\Bigr)-\frac{N-1}{r}\frac1{(p-1)^{1/q}}\label{mv}
\een
where $\sigma=\rho^{1/p}$. It is clear that
\[
-\,\frac{N-1}{r}\frac1{(p-1)^{1/q}}>-\,\omega
\]
for any $r>r_0$. Hence in order to prove our result we need to estimate the minimum
\[
\min_{|x|\le 1}\Bigl[1-|x|^p+\frac{x\,f(\sigma\,x)}{\sigma^{p-1}}\Bigr]\;.
\]
First assume that $|x|^p\le 1/2$. If $\sigma\,|x|\ge s_0$, then
\ben
1-|x|^p+\frac{x\,f(\sigma\,x)}{\sigma^{p-1}}&=&1-|x|^p+\frac{|x|^p\,f(\sigma\,x)}{\sigma^{p-1}\,|x|^{p-2}\,x}\\
&\ge& 1-|x|^p+4\,\omega\,|x|^p\\
&&=1+(4\,\omega\,-1)\,|x|^p\ge\frac 12+2\,\omega\ge2\,\omega\;.
\een
Otherwise, if $\sigma\,|x|\le s_0$, then we have
\[
1-|x|^p+\frac{x\,f(\sigma\,x)}{\sigma^{p-1}}\ge 1-|x|^p-\frac{|x|}{\sigma^{p-1}}\sup_{s\in[-s_0,s_0]}|f(s)|>\frac1{2}-\frac1{\sigma^{p-1}}\sup_{s\in[-s_0,s_0]}|f(s)|>\frac14
\]
if $\sigma\ge \sigma_0$ and we already know that $\frac14>2\,\omega$.

On the other hand, if $|x|^p\ge 1/2$, then $\sigma\,|x|\ge 2^{-1/p}\,\sigma$, hence for $\sigma\ge\sigma_0$, we have $\sigma\,|x|\ge s_0$ and
\ben
1-|x|^p+\frac{x\,f(\sigma\,x)}{\sigma^{p-1}}&=&1-|x|^p+\frac{|x|^p\,f(\sigma\,x)}{\sigma^{p-1}\,|x|^{p-2}\,x}\\
&\ge& 1-|x|^p+4\,\omega\,|x|^p\ge2\,\omega\;.
\een
This concludes the proof.\end{proof}

In preparation for Section~\ref{Section:Existence} we finally relate the energy associated to the flow with the quantity $\rho$.
\begin{prop}\label{Energie-rho} Consider $E(u,v)=F(u)+\Phi_{p'}(v)$ and $\rho(u,v)=p\left[\Phi_p(u)+\Phi_{p'}(v)\right]$. Under assumptions $(\mathrm H1)$ through $(\mathrm H5)$, it holds that
\[
E(u,v)\to \infty\quad\text{if and only if }\quad \rho(u,v)\to \infty\;,
\]
for each $(u,v)$ in $\R^2$. \end{prop}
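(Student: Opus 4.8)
The plan is to work with the explicit expressions $\rho(u,v) = |u|^p + (p-1)\,|v|^{p'}$ and $E(u,v) = F(u) + \frac1{p'}\,|v|^{p'}$ (obtained from $p\,\Phi_p(u)=|u|^p$ and $p\,\Phi_{p'}(v)=(p-1)\,|v|^{p'}$), and to read the equivalence as a statement about sequences: for every sequence $(u_n,v_n)$ in $\R^2$, one has $E(u_n,v_n)\to\infty$ if and only if $\rho(u_n,v_n)\to\infty$. The only facts about $F$ I will use are the three consequences of $(\mathrm H1)$--$(\mathrm H5)$ recorded after the list of hypotheses: $F$ is continuous, $F$ is bounded below by $-\,\bar F$ (property $(iii)$), and $F(s)\to\infty$ as $|s|\to\infty$ (property $(iv)$).

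For the implication $\rho\to\infty\Rightarrow E\to\infty$ I would argue by contradiction. Suppose $\rho(u_n,v_n)\to\infty$ while $E(u_n,v_n)$ fails to tend to $\infty$; passing to a subsequence we may assume $E(u_n,v_n)\le M$ for some constant $M$. Inserting $F\ge -\,\bar F$ into $E = F(u_n)+\frac1{p'}|v_n|^{p'}\le M$ gives $\frac1{p'}|v_n|^{p'}\le M+\bar F$, so $\{v_n\}$ is bounded; and using $\frac1{p'}|v_n|^{p'}\ge 0$ gives $F(u_n)\le M$, which forces $\{u_n\}$ to be bounded because $F(s)\to\infty$ as $|s|\to\infty$. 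But then $\rho(u_n,v_n)=|u_n|^p+(p-1)|v_n|^{p'}$ stays bounded, contradicting $\rho(u_n,v_n)\to\infty$.

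The converse $E\to\infty\Rightarrow\rho\to\infty$ is the easier half and is again cleanest by contradiction: if $\rho(u_n,v_n)$ does not tend to $\infty$, then along some subsequence $\rho(u_n,v_n)\le K$, so both $\{u_n\}$ and $\{v_n\}$ are bounded; continuity of $F$ then bounds $F(u_n)$, while $\frac1{p'}|v_n|^{p'}$ is bounded, so $E(u_n,v_n)$ stays bounded, contradicting $E\to\infty$.

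I do not expect a serious obstacle here: the statement is essentially a coercivity comparison, and the entire content is that the $|v|^{p'}$ terms of $E$ and $\rho$ are comparable while the lower bound and coercivity of $F$ control the $u$-variable. The one step that genuinely uses the hypotheses, rather than being formal, is ``$F(u_n)\le M\Rightarrow\{u_n\}$ bounded'', which is exactly where the coercivity property $(iv)$ (a consequence of $(\mathrm H5)$) enters; the lower bound $(iii)$ is what prevents a large negative value of $F(u_n)$ from masking a large $|v_n|$ in the energy.
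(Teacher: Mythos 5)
Your proof is correct and follows essentially the same route as the paper: the paper's proof is the one-line observation that $E(u,v)\to\infty$, $\sup(|u|,|v|)\to\infty$, and $\rho(u,v)\to\infty$ are all equivalent, and your two contradiction arguments (using the lower bound $F\ge-\,\bar F$, the coercivity $F(s)\to\infty$ as $|s|\to\infty$, and continuity of $F$) are precisely the details that make that equivalence rigorous, with boundedness of $(u_n,v_n)$ playing the role of the intermediate property $\sup(|u|,|v|)\to\infty$.
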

\begin{proof} The properties $E(u,v)\to\infty$, $\sup(|u|,|v|)\to\infty$, and $\rho(u,v)\to\infty$ are equivalent.\end{proof}

\section{Existence result}\label{Section:Existence}

We may now state our main result.
\begin{thm}\label{Main} Let $N>$, $p>1$ and suppose that assumptions $(\mathrm H1)$-$(\mathrm H6)$ are satisfied. Then there exists an unbounded increasing sequence $\{\lambda_k\}$ of initial data such that for any $k\in\N$,~(\ref{ivp}) with $\lambda=\lambda_k$, has a compactly supported solution with exactly $k$ nodes.\end{thm}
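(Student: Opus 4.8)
The plan is to run a shooting argument in the parameter $\lambda=u(0)$, reading off the number of nodes from the total rotation of the trajectory in the generalized polar coordinates of Section~\ref{Section:Coordinates} and identifying compactly supported solutions as the transition points of the shooting.

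First I would reduce the behaviour of $u_\lambda$ to a clean dichotomy. By Propositions~\ref{basic1}, \ref{basic2} and~\ref{basic3}, every $u_\lambda$ has only finitely many nodes and $E_\lambda(r)\to\mathcal E_\lambda=F(\ell)$ with $f(\ell)=0$; by $(\mathrm H4)$ one has $\mathcal E_\lambda\le 0$, with $\mathcal E_\lambda=0$ possible only for $\ell=0$. Corollary~\ref{coro31} then shows that $\mathcal E_\lambda=0$ is equivalent to $u_\lambda$ reaching a double zero and vanishing afterwards, i.e.\ (via Lemma~\ref{compact_support}) to $u_\lambda$ being compactly supported, whereas $\mathcal E_\lambda<0$ forces $u_\lambda\to\ell\neq0$. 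Thus the compactly supported solutions are exactly those in $S:=\{\lambda>0:\mathcal E_\lambda=0\}$, and I set $U:=\{\lambda>0:\mathcal E_\lambda<0\}$, on which the node count $N(\lambda)$ is finite and well defined.

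Next I would translate nodes into rotations. In the coordinates~\eqref{uvxy} a zero of $u$ corresponds to $\cos_q(\theta)=0$, that is to $\theta\in\frac{\pi_p}2+\pi_p\,\Z$, while a double zero corresponds to $\rho\to0$ (the trajectory reaching the origin). By the Rotation Lemma~\ref{AngularVelocity}, $\theta_\lambda$ is strictly decreasing as long as $r\ge r_0$ and $\rho_\lambda\ge\sigma_0^p$, so $N(\lambda)$ is essentially the number of half-turns performed by the trajectory, controlled from below by the total angular variation of $\theta_\lambda$. The key quantitative input is then that a large launch energy produces many half-turns: this is the step I expect to be the main obstacle. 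For large $\lambda$ the energy $E_\lambda(0)=F(\lambda)$ is large (equivalently, by Proposition~\ref{Energie-rho}, $\rho_\lambda(0)$ is large), and I must show that $\rho_\lambda$ stays above $\sigma_0^p$ on an $r$-interval beyond $r_0$ long enough to accumulate, at angular speed at least $\omega$, any prescribed number $k$ of half-turns. The difficulty is to control the dissipation $E_\lambda(r_0)-\mathcal E_\lambda=(N-1)\int_{r_0}^\infty t^{-1}\,|u_\lambda'|^p\,dt$ and rule out that all the energy is spent in one or two excursions before the trajectory winds $k$ times; here $(\mathrm H5)$-$(\mathrm H6)$ (superlinearity and subcriticality) enter, through a Pohozaev-type identity, to bound the size of each excursion and the growth of the support and hence to furnish a threshold $\Lambda_k$ with $N(\lambda)\ge k$ for $\lambda\ge\Lambda_k$.

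Finally I would organise the shooting. Using Proposition~\ref{unique-ext} (uniqueness up to a double zero or a maximum of $F$) together with continuous dependence on compact $r$-intervals, I would prove that $U$ is open and $N|_U$ is locally constant: when $\mathcal E_{\lambda_0}<0$ the solution is, past its last node, trapped on one side and converging to $\ell\neq0$, a configuration stable under small changes of $\lambda$. For $0<\lambda<A$ one has $F(\lambda)<0$, hence $E_\lambda<0$ throughout and $u_\lambda>0$ with $N=0$, so such $\lambda$ lie in $U$. Across a boundary point $\lambda_*\in\partial U=S$ the solution $u_{\lambda_*}$ is compactly supported, and comparing it with nearby solutions up to just inside its support shows that $N$ can jump by at most one there. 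Combining local constancy of $N$, the value $0$ for small $\lambda$, the unboundedness from the previous paragraph, and the ``jump at most one'' property, every $k\in\N$ occurs as a boundary value; setting $\lambda_k:=\sup\{\lambda\in U:N\le k\text{ on }U\cap(0,\lambda]\}$ yields an increasing, unbounded sequence with $u_{\lambda_k}$ compactly supported and having exactly $k$ nodes, which is the assertion of Theorem~\ref{Main}.
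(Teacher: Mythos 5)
Your overall skeleton --- rotation counting in the generalized polar coordinates to force many nodes for large $\lambda$, plus a shooting argument in which compactly supported solutions appear as transition values of the node count --- is the same as the paper's. But the third part of your argument has a genuine gap, located exactly where the paper has to work hardest: the behaviour at double zeros. First, your sets $U=\{\lambda:\mathcal E_\lambda<0\}$ and $S=\{\lambda:\mathcal E_\lambda=0\}$ are not well defined, because uniqueness for~(\ref{ivp}) fails at a double zero (Proposition~\ref{unique-ext}): whenever $u_\lambda$ reaches a double zero, the identically-zero continuation (giving $\mathcal E_\lambda=0$) and possibly a nontrivial continuation (giving $\mathcal E_\lambda<0$) both solve the equation, so $\mathcal E_\lambda$ is not a function of $\lambda$ alone. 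The paper notes this explicitly and for that reason classifies $\lambda$ only through data available up to $r_\lambda(0)=\inf\{r\ge 0: E_\lambda(r)=0\}$, where uniqueness does hold: the sets $A_k$ (no double zero, $k$ nodes) and $I_k$ (double zero at $r_\lambda(0)$, $k$ nodes before it). Second, even after fixing a selection rule, your claims that $U$ is open and that $N|_U$ is locally constant fail at any $\lambda_0$ whose solution reaches a double zero and continues nontrivially: continuous dependence breaks down at the double zero, and nearby solutions may either be compactly supported (so $U$ is not open) or pick up one extra sign change (so $N$ jumps \emph{inside} $U$, not only on $\partial U$). Your sentence ``comparing it with nearby solutions up to just inside its support shows that $N$ can jump by at most one'' is precisely the paper's key technical statement (part $(iii)$ of its Proposition in Section~\ref{Section:Existence}: a neighbourhood of $\lambda_0\in I_k$ lies in $A_k\cup A_{k+1}\cup I_k$), and it is not a soft continuity fact: the paper proves it with the weighted energy $H_\lambda(r)=r^{p'(N-1)}E_\lambda(r)$ and a quantitative estimate that rules out a $(k+2)$-th zero for nearby $\lambda$. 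Without that estimate, your induction producing the sequence $\{\lambda_k\}$ does not get off the ground.

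A smaller remark: for the step $N(\lambda)\to\infty$ you correctly identify the rotation mechanism and that $(\mathrm H5)$--$(\mathrm H6)$ must yield uniform largeness of the trajectory, but you leave the crucial input as a sketch (``a Pohozaev-type identity to bound the size of each excursion''). The paper imports exactly this statement as Proposition~\ref{gamaza} (uniform blow-up of $E_\lambda$ on compact $r$-intervals, quoted from~\cite{GMZ}), after which the combination with Proposition~\ref{Energie-rho}, Proposition~\ref{Increasing} and Lemma~\ref{AngularVelocity} coincides with the paper's proof of Lemma~\ref{lambdaBig}. So this part is structurally sound, provided you cite or actually prove the uniform energy estimate rather than gesture at it.
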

The proof is based upon some preliminary results that we state and prove next.\vs

For given $\lambda>A$, let $(u_\lambda, v_\lambda)$ be a solution to~(\ref{Flow}). Recall that the energy function~$E_\lambda$ has been defined by ~\eqref{funct-10}. For any $a\in [0,F(\lambda)]$, let us set
\[
r_\lambda(a):=\inf\{r\geq 0\,:\, E_\lambda(r)=a\}\;.
\]

We first observe that $r_\lambda(0)$ is finite. Indeed, if for some $\lambda\ge A$ (as defined in Section~\ref{Section:Intro}) we have that $r_\lambda(0)=\infty$, then $E_\lambda(r)\ge 0$ for all $r\ge 0$, and thus, from Corollary~\ref{coro31}, there exists $r_0>0$ such that $r_0$ is a double zero of $u_\lambda$ implying by the definition that $r_\lambda(0)\le r_0<\infty$.

We will denote by $N_{[0,R)}(\lambda)$ the number of nodes of $u_\lambda$ in $[0,R)$. For simplicity of notation, we will denote
\[
N(\lambda):=N_{[0,r_\lambda(0))}(\lambda)\;.
\]\vs

\noi Notice that all the possible zeros of $u_\lambda$ in $[0,r_\lambda(0))$ must be simple zeros.\vs

The following proposition was proved in \cite{GMZ}.
\begin{prop}\label{gamaza} Under assumptions $(\mathrm H1)$ through $(\mathrm H6)$, given $R>0$,
\[
\lim_{\lambda\to\infty}E_\lambda(r)=\infty
\]
uniformly for $r\in[0,R]$. \end{prop}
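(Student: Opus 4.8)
The goal is to prove Proposition~\ref{gamaza}: under $(\mathrm H1)$--$(\mathrm H6)$, for any fixed $R>0$ we have $E_\lambda(r)\to\infty$ uniformly on $[0,R]$ as $\lambda\to\infty$. The plan is to exploit the monotonicity of the energy together with a lower bound on $E_\lambda(R)$ that grows without bound in $\lambda$. Since $E_\lambda$ is nonincreasing in $r$ (Proposition~\ref{basic1}$(i)$), we have $E_\lambda(r)\ge E_\lambda(R)$ for all $r\in[0,R]$, so it suffices to show $E_\lambda(R)\to\infty$. This immediately reduces a statement about uniform convergence on an interval to a single estimate at the right endpoint.

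To control the energy dissipation, recall that
\[
E_\lambda(R)=E_\lambda(0)-(N-1)\int_0^R\frac{|u_\lambda'(t)|^p}{t}\,dt=F(\lambda)-(N-1)\int_0^R\frac{|u_\lambda'(t)|^p}{t}\,dt\;.
\]
Since $F(\lambda)\to\infty$ by property $(iv)$ following $(\mathrm H6)$, the heart of the matter is to bound the dissipation integral by something that grows strictly slower than $F(\lambda)$. The plan is to get a pointwise bound on $|u_\lambda'(t)|^p$ on $[0,R]$ in terms of the energy. From the definition of $E_\lambda$ and the lower bound $F\ge-\bar F$ from property $(iii)$, one has
\[
\frac{|u_\lambda'(r)|^p}{p'}=E_\lambda(r)-F(u_\lambda(r))\le E_\lambda(0)+\bar F=F(\lambda)+\bar F\;,
\]
so $|u_\lambda'(r)|^p\le p'\,(F(\lambda)+\bar F)$ on all of $[0,\infty)$. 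The difficulty is that $\int_0^R t^{-1}\,dt$ diverges at the origin, so one cannot simply pull this bound out of the integral. Here I would use the regularity near $r=0$: from~\eqref{ivp} and $u_\lambda'(0)=0$, integrating the equation gives $r^{N-1}\phi_p(u_\lambda'(r))=-\int_0^r s^{N-1}f(u_\lambda(s))\,ds$, whence $|u_\lambda'(r)|^{p-1}\le \frac{r}{N}\,\max_{[0,R]}|f(u_\lambda)|$ and therefore $|u_\lambda'(t)|^p/t$ stays integrable near $0$ with a bound controlled by a power of $\max_{|s|\le\|u_\lambda\|_\infty}|f(s)|$.

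The main obstacle is quantifying the growth rate: I expect $\int_0^R t^{-1}|u_\lambda'|^p\,dt$ to be bounded by a quantity like $C\,R\,\bigl(\max_{[0,R]}|f(u_\lambda)|\bigr)^{p'}$, which grows with $\lambda$ through the sup-norm of $u_\lambda$ on $[0,R]$, and I must show this grows strictly slower than $F(\lambda)$. Condition $(\mathrm H6)$ is designed precisely for this: it controls $F(\theta x)$ from below by a fixed fraction of $x\,f(x)$ for large $x$, encoding subcriticality, and should let me absorb the dissipation into $F(\lambda)$ so that $E_\lambda(R)=F(\lambda)-o(F(\lambda))\to\infty$. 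Since this proposition is attributed to \cite{GMZ}, I would not reproduce the full argument but would organize it exactly along these lines: reduce to the endpoint by monotonicity, bound the dissipation by the $C^1$-estimates near $0$ combined with the global $|u_\lambda'|$ bound, and invoke $(\mathrm H6)$ to guarantee the dissipation is of lower order than $F(\lambda)$.
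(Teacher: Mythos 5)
Your opening reduction is fine: $E_\lambda$ is nonincreasing by Proposition~\ref{basic1}$(i)$, so it suffices to prove $E_\lambda(R)\to\infty$; and your decision to defer to \cite{GMZ} matches the paper, which gives no proof of Proposition~\ref{gamaza} at all and simply cites that reference. The trouble is that the sketch you give of how the proof ``would be organized'' is not a viable route: the gap is not a missing detail but a central estimate that is both unprovable and aimed at a false target.

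Concretely, you propose to bound the dissipation $(N-1)\int_0^R t^{-1}|u_\lambda'(t)|^p\,dt$ by a quantity of the form $C_R\bigl(\max_{[0,R]}|f(u_\lambda)|\bigr)^{p'}$ and then to use $(\mathrm H6)$ to show this is $o(F(\lambda))$. Since $u_\lambda(0)=\lambda$, the maximum in your bound is at least $f(\lambda)$, and $(\mathrm H5)$ forces $f(\lambda)^{p'}$ to dominate $F(\lambda)$: for large $\lambda$ one has $F(\lambda)\le C+\lambda\,f(\lambda)$ because $f$ is eventually nondecreasing, while
\[
\frac{f(\lambda)^{p'}}{\lambda\,f(\lambda)}=\Bigl(\frac{f(\lambda)}{\lambda^{p-1}}\Bigr)^{\frac1{p-1}}\longrightarrow\infty
\]
by $(\mathrm H5)$. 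So your upper bound on the dissipation eventually exceeds $F(\lambda)$ by an arbitrarily large factor, and $F(\lambda)$ minus it tends to $-\infty$: no conclusion follows. Worse, the underlying goal is false: the dissipation on $[0,R]$ is \emph{not} of lower order than $F(\lambda)$. If $f$ behaves at infinity like a subcritical power (say $p=2$, $f(u)\sim|u|^{q-1}u$), the rescaling $u_\lambda(r)\approx\lambda\,v\bigl(\lambda^{(q-1)/2}r\bigr)$ gives $E_\lambda(R)\approx\lambda^{q+1}\,\mathcal E_v\bigl(\lambda^{(q-1)/2}R\bigr)$, where the fixed profile $v$ oscillates to zero and its energy $\mathcal E_v(s)$ tends to $0$ as $s\to\infty$; hence $E_\lambda(R)=o(F(\lambda))$, i.e.\ the dissipation consumes asymptotically \emph{all} of $F(\lambda)$, and the content of the proposition is that the vanishing fraction left over still diverges. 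Any correct proof must therefore be sharp, and this is where $(\mathrm H6)$ really enters — not to compare $f^{p'}$ with $F$, about which it says nothing, but through the Pohozaev-type identity
\[
\frac{d}{dr}\Bigl[r^N E_\lambda(r)+\frac{N-p}{p}\,r^{N-1}u_\lambda\,\phi_p(u_\lambda')\Bigr]
= r^{N-1}\Bigl[N\,F(u_\lambda)-\frac{N-p}{p}\,u_\lambda\,f(u_\lambda)\Bigr]\;,
\]
whose right-hand side is eventually positive precisely because the constant in $(\mathrm H6)$ exceeds $\frac{N-p}{Np}$ (for large $|x|$, $N\,F(x)\ge N\,F(\theta x)>\frac{N-p}{p}\,x\,f(x)$). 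Integrating this identity and controlling the boundary term $r^{N-1}u_\lambda\,\phi_p(u_\lambda')$ is the Castro--Kurepa-type argument carried out in \cite{GMZ}; the pointwise gradient bounds in your sketch play no role in it.
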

Now we start to make use of the variables introduced in Section~\ref{Section:Coordinates}.
\begin{prop}\label{Increasing} If $N(\lambda)>1$, then for any $r\in (0,r_\lambda(0))$, the number of nodes of~$u_\lambda$ in $(0,r)$ is given by
\[
\left[\Bigl(\frac{\pi_p}{2}-\,\theta_\lambda(r)\Bigr)\frac1{\pi_p}\right]
\]
where $[x]$ denotes the integer part of $x$. \end{prop}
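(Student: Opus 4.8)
The plan is to connect the number of nodes of $u_\lambda$ directly to the angular variable $\theta_\lambda$ introduced in Section~\ref{Section:Coordinates}. The crucial observation is that in the phase plane $(u,v)$ with $v=\phi_p(u')$, a node of $u_\lambda$ (a sign-changing simple zero) corresponds exactly to a crossing of the $v$-axis, i.e. a point where $u=0$ but $v\neq 0$. In generalized polar coordinates this means $\cos_q(\theta_\lambda)=0$, so $\theta_\lambda$ passes through an odd multiple of $\pi_p/2$. So the first step is to make precise the dictionary: $u_\lambda(r)=0$ with $u_\lambda'(r)\neq0$ iff $\theta_\lambda(r)\equiv \pi_p/2 \pmod{\pi_p}$.

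The second step is to establish that $\theta_\lambda$ is strictly decreasing along the part of the trajectory we care about. Here I would invoke the Rotation Lemma (Lemma~\ref{AngularVelocity}): on $[0,r_\lambda(0))$ the energy $E_\lambda$ stays at least $0$, hence by Proposition~\ref{Energie-rho} the quantity $\rho_\lambda$ is bounded below, and for $r$ large and $\rho_\lambda$ large we have $\theta_\lambda'<-\omega<0$. More care is needed near $r=0$ and at moderate $\rho$, but the hypothesis $N(\lambda)>1$ forces the solution to wind around the origin several times, which already guarantees that $\theta_\lambda$ is genuinely decreasing through the relevant range and never gets stuck; in particular $\theta_\lambda$ can cross each level $\pi_p/2+k\,\pi_p$ at most once, and since $\theta_\lambda(0)=0$ the crossings are hit in decreasing order. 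Monotonicity is what lets us simply count crossings by reading off the value of $\theta_\lambda(r)$.

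The third step is the bookkeeping. Starting from $\theta_\lambda(0)=0$ and decreasing, $\theta_\lambda$ first reaches $\pi_p/2-\pi_p=-\pi_p/2$ (the first odd multiple of $\pi_p/2$ strictly below $0$), then $-3\pi_p/2$, and so on; equivalently, the number of odd multiples of $\pi_p/2$ in the open interval $(\theta_\lambda(r),0)$ equals the number of sign changes of $u_\lambda$ in $(0,r)$. A direct count shows this number is $\bigl[(\pi_p/2-\theta_\lambda(r))/\pi_p\bigr]$: each downward crossing of a value $\pi_p/2-k\,\pi_p$, $k=1,2,\dots$, increments the node count by one, and the integer part of $(\pi_p/2-\theta_\lambda(r))/\pi_p$ counts precisely how many such levels lie above $\theta_\lambda(r)$. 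I would verify the endpoint conventions carefully so that a zero exactly at $r$ is or is not counted consistently with the claimed formula.

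The main obstacle will be the second step: rigorously ruling out that $\theta_\lambda$ oscillates, or touches a level $\pi_p/2+k\,\pi_p$ tangentially without crossing, in the region where the Rotation Lemma does not directly apply (small $r$, or $\rho_\lambda$ between its lower bound and $\sigma_0^p$). The clean way around this is to note that a zero of $u_\lambda$ is always a \emph{transversal} crossing of the $v$-axis, since at such a point $u_\lambda=0$ forces $v_\lambda=\phi_p(u_\lambda')\neq0$ (as the zero is simple), so from the first equation of~\eqref{Syst:rho-theta}–\eqref{uvxy} one reads off $\theta_\lambda'\neq0$ there with the correct sign; combined with the fact that between consecutive nodes $u_\lambda$ attains an extremum with $|u_\lambda|\geq\min\{|B|,A\}$ (as in Proposition~\ref{basic2}), the trajectory genuinely encircles the origin and $\theta_\lambda$ decreases by exactly $\pi_p$ between consecutive nodes. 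This transversality argument, rather than global monotonicity of $\theta_\lambda$, is what makes the counting exact.
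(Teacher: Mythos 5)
Your proposal is correct and takes essentially the same approach as the paper: the paper's entire proof is the single sentence that the result \emph{follows directly from the change of variables}~\eqref{uvxy}, and your final transversality argument (at any zero of $u_\lambda$ in $(0,r_\lambda(0))$ one has $v_\lambda\neq 0$ since $E_\lambda>0$ there, hence $\theta_\lambda'=-\,p\,\Phi_q(v_\lambda)/\rho_\lambda<0$, so odd multiples of $\pi_p/2$ can only be crossed downward and never re-crossed, making the level count exact) is precisely the bookkeeping the paper leaves implicit. The only caveat is that the Rotation Lemma detour in your second step is unnecessary and its intermediate claim (that $N(\lambda)>1$ forces $\theta_\lambda$ to be genuinely decreasing) is unjustified, but your last paragraph correctly discards global monotonicity in favor of transversality, which is all that is needed.
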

\begin{proof} Follows directly from the change of variables~\eqref{uvxy}. \end{proof}\vs

Propositions~Ê\ref{gamaza} and~\ref{Increasing} combined with the \emph{Rotation Lemma}~\ref{AngularVelocity} on the angular velocity, yields the following result.

\begin{lemma}\label{lambdaBig} Under assumptions $(\mathrm H1)$ through $(\mathrm H6)$,
\[\lim_{\lambda\to+\infty}N(\lambda)=+\infty\;.\]
\end{lemma}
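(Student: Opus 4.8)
The plan is to turn the uniform energy blow-up of Proposition~\ref{gamaza} into a long interval on which the angular velocity bound of the \emph{Rotation Lemma}~\ref{AngularVelocity} is in force, and then to count nodes as crossings of the angle $\theta_\lambda$ through the odd multiples of $\pi_p/2$. First I would fix once and for all some $\omega\in(0,1/8)$ and let $r_0,\sigma_0$ be the associated constants of Lemma~\ref{AngularVelocity}; crucially these do \emph{not} depend on $\lambda$. Next I would convert the threshold $\rho_\lambda\ge\sigma_0^p$ into an energy threshold: since $\rho(u,v)=|u|^p+\tfrac{p}{p'}|v|^{p'}$ is proper, the sublevel set $K:=\{(u,v):\rho(u,v)\le\sigma_0^p\}$ is compact, so $E(u,v)=F(u)+\Phi_{p'}(v)$ attains a finite maximum $\mathcal E_0:=\max_{K}E$ there (this is the quantitative form of Proposition~\ref{Energie-rho}), and $\mathcal E_0\ge E(0,0)=F(0)=0$. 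Consequently $E_\lambda(r)>\mathcal E_0$ forces $(u_\lambda(r),v_\lambda(r))\notin K$, i.e. $\rho_\lambda(r)>\sigma_0^p$.

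Given a target $k\in\N$, I would choose $R>r_0$ so large that $\big\lfloor\omega\,(R-r_0)/\pi_p\big\rfloor-1\ge k$. By Proposition~\ref{gamaza} there is a $\Lambda$ such that $E_\lambda(r)>\mathcal E_0$ for all $r\in[0,R]$ whenever $\lambda>\Lambda$. For such $\lambda$ two things follow: since $E_\lambda>\mathcal E_0\ge0$ on $[0,R]$, the first zero of the energy satisfies $r_\lambda(0)>R$, so the whole interval $[0,R]$ lies in $[0,r_\lambda(0))$; and $\rho_\lambda(r)>\sigma_0^p$ on $[0,R]$. In particular, on $[r_0,R]$ both hypotheses of the Rotation Lemma hold, so $\theta_\lambda'<-\omega$ there. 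Hence $\theta_\lambda$ is strictly decreasing on $[r_0,R]$ with total decrease $\theta_\lambda(r_0)-\theta_\lambda(R)=\int_{r_0}^{R}(-\theta_\lambda')\,dr>\omega\,(R-r_0)$.

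It then remains to count. As in Proposition~\ref{Increasing}, a node of $u_\lambda$ corresponds to $u_\lambda=0$, that is to $\cos_q(\theta_\lambda)=0$, i.e. to $\theta_\lambda\equiv\pi_p/2\pmod{\pi_p}$. Since these level values are spaced exactly $\pi_p$ apart and $\theta_\lambda$ decreases monotonically across a range of length greater than $\omega(R-r_0)$ on $[r_0,R]$, it meets at least $\big\lfloor\omega(R-r_0)/\pi_p\big\rfloor-1$ of them at points $r\in(r_0,R)$. Each such $r$ is a genuine node: there $\rho_\lambda(r)\ge\sigma_0^p>0$, so $u_\lambda(r)=0$ forces $v_\lambda(r)\ne0$, whence $u_\lambda'(r)=\phi_q(v_\lambda(r))\ne0$, a simple zero with sign change, lying in $(0,r_\lambda(0))$. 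Therefore $N(\lambda)\ge\big\lfloor\omega(R-r_0)/\pi_p\big\rfloor-1\ge k$ for every $\lambda>\Lambda$, which gives $\lim_{\lambda\to+\infty}N(\lambda)=+\infty$.

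The main obstacle is the interface between the two ingredients: Proposition~\ref{gamaza} controls the energy, while the Rotation Lemma needs a lower bound on $\rho$, so the compactness/properness argument producing $\mathcal E_0$ is the linchpin that lets a \emph{uniform} energy bound propagate to a uniform bound on $\rho_\lambda$ over the fixed window $[0,R]$. I would also take care that the rotation rate $\omega$ and the constants $r_0,\sigma_0$ are independent of $\lambda$ (so that enlarging $R$ and then $\lambda$ really does force more crossings), and that the crossings counted on $[r_0,R]$ are simple interior sign changes rather than spurious tangencies; the positivity of $\rho_\lambda$ there handles exactly this point, and restricting the count to $[r_0,R]$ avoids having to estimate the uncontrolled behavior of $\theta_\lambda$ on $[0,r_0]$.
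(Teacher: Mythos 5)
Your proposal is correct and follows essentially the same route as the paper: Proposition~\ref{gamaza} combined with Proposition~\ref{Energie-rho} to force $\rho_\lambda>\sigma_0^p$ on a fixed window $[0,R]$, then the Rotation Lemma~\ref{AngularVelocity} to make $\theta_\lambda$ decrease by more than a prescribed multiple of $\pi_p$, and finally counting nodes as crossings of $\theta_\lambda$ through $\pi_p/2\pmod{\pi_p}$ (the paper invokes Proposition~\ref{Increasing} for this last step). If anything, you make explicit two points the paper leaves implicit — the compactness argument turning the energy bound into the $\rho$-bound, and the restriction of the count to $[r_0,R]\subset[0,r_\lambda(0))$, which sidesteps any assumption on $\theta_\lambda(r_0)$.
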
\vs
\begin{proof} Let $M>0$. We will show that there exists $\lambda_M>0$ such that for $\lambda>\lambda_M$, we have $N(\lambda)>M$. We prove this by finding an interval $[0,R]$ with $R=R(M)$, such that the number of nodes in $[0,R]$ is greater than $M$. To do this we set
\[
R=\frac{\pi_p}{\omega}\left(M+\frac 12\right)+r_0\;.
\]
Using Propositions~\ref{Energie-rho} and~\ref{gamaza}, we know that there exists $\lambda_M$ such that for any $\lambda\ge\lambda_M$, $\rho_\lambda(r)>\rho_0:=\sigma_0^p$ in $[0,R]$. Next we apply the rotation Lemma~\ref{AngularVelocity} which ensures that
\[
-\,\theta_\lambda(R)\ge\omega\, R-\omega\, r_0-\,\theta_\lambda(r_0)\ge\omega\, R-\omega\, r_0=\left(M+\frac 12\right)\pi_p\;,
\]
by the choice of $R$. Applying Corollary~\ref{Increasing}, it follows that
\[
N(\lambda)\ge \left[M+1\right]>M\;.
\]
\end{proof}\vs

\begin{lemma}\label{NodeChanges} Under assumptions $(\mathrm H1)$ through $(\mathrm H5)$, when $\lambda>0$ varies, the number of nodes of the solution $u_\lambda$ can locally change by at most one. Moreover, to the value of~$\lambda$ at which the number of nodes changes corresponds a solution with compact support.\end{lemma}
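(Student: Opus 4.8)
The plan is to read the node count directly off the angular coordinate and reduce the whole statement to the continuity of a single scalar function of~$\lambda$. By Proposition~\ref{Increasing}, for every $r\in(0,r_\lambda(0))$ the number of nodes of $u_\lambda$ in $(0,r)$ equals $\bigl[(\pi_p/2-\theta_\lambda(r))/\pi_p\bigr]$. Since $\theta_\lambda$ is continuous and the limit $\lim_{r\uparrow r_\lambda(0)}\theta_\lambda(r)=\theta_\lambda(r_\lambda(0))=:\Theta(\lambda)$ exists, letting $r\uparrow r_\lambda(0)$ gives
\[
N(\lambda)=\left[\frac{\pi_p/2-\Theta(\lambda)}{\pi_p}\right].
\]
Thus $N(\lambda)$ is the integer part of a single real-valued function of $\lambda$, and the assertion that $N$ changes locally by at most one follows once $\lambda\mapsto\Theta(\lambda)$ is shown to be continuous: the integer part of a continuous function changes by exactly one as its argument crosses an integer, and is locally constant otherwise.

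First I would establish continuity of the flow. On $[0,r_\lambda(0))$ one has $E_\lambda>0$, so $u_\lambda$ has no double zero there (a double zero forces $E_\lambda=0$) and all its zeros are simple; by uniqueness (Proposition~\ref{unique-ext}) and continuous dependence on initial data, the pair $(\rho_\lambda,\theta_\lambda)$ depends continuously on $\lambda$, uniformly on compact subintervals of $[0,r_\lambda(0))$. It then remains to control the hitting time $r_\lambda(0)=\inf\{r\ge 0:E_\lambda(r)=0\}$. When $u_\lambda'(r_\lambda(0))\ne 0$ we have $E_\lambda'(r_\lambda(0))=-\frac{N-1}{r_\lambda(0)}\,|u_\lambda'(r_\lambda(0))|^p<0$, so $E_\lambda$ crosses the level $0$ transversally and the implicit function theorem gives continuity of $r_\lambda(0)$ near such $\lambda$; combined with continuity of $\theta_\lambda$ this yields continuity of $\Theta$ at these points, and there $N$ is locally constant.

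The transition values are exactly those $\lambda_0$ for which $\Theta(\lambda_0)\in\frac{\pi_p}{2}+\pi_p\,\Z$, equivalently $\cos_q(\Theta(\lambda_0))=0$, equivalently $u_{\lambda_0}(r_{\lambda_0}(0))=\rho_{\lambda_0}^{1/p}\,\cos_q(\Theta(\lambda_0))=0$. At such a point the equalities $u_{\lambda_0}(r_{\lambda_0}(0))=0$, $E_{\lambda_0}(r_{\lambda_0}(0))=0$ and $F(0)=0$ force $|u_{\lambda_0}'(r_{\lambda_0}(0))|^p/p'=0$, hence $u_{\lambda_0}'(r_{\lambda_0}(0))=0$: the solution reaches a double zero precisely at $r_{\lambda_0}(0)$. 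By Proposition~\ref{basic1}$(iii)$, the function obtained by extending $u_{\lambda_0}$ by $0$ for $r\ge r_{\lambda_0}(0)$ is a solution of~\eqref{ivp} that vanishes identically beyond $r_{\lambda_0}(0)$, that is, a compactly supported solution; this proves the second assertion. Since only the single zero sitting at the endpoint $r_{\lambda_0}(0)$ is gained or lost in the transition, $N$ can change by at most one.

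The main obstacle is the continuity of $\Theta$ precisely at these transition values. There the crossing of the energy level $0$ is tangential, since $u_{\lambda_0}'(r_{\lambda_0}(0))=0$ makes $E_{\lambda_0}'(r_{\lambda_0}(0))=0$ as well, so the implicit function theorem no longer applies and one must check directly that the first hitting time $r_\lambda(0)$ still converges to $r_{\lambda_0}(0)$ as $\lambda\to\lambda_0$. The phase-plane picture makes this plausible: the energy-zero curve $F(u)+|v|^{p'}/p'=0$ meets the axis $\{u=0\}$ only at the origin (there $u=0$ forces $v=0$), so the trajectory passes through the origin at $\lambda_0$ and misses it on either side, realising the last sign change on one side and not on the other. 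A secondary point to verify is that uniqueness (Proposition~\ref{unique-ext}) is not lost at an interior critical point of $u_\lambda$ sitting at a relative maximum of $F$; here the qualitative information of Section~\ref{Section:Properties} together with $(\mathrm H3)$ must be invoked to keep the flow single-valued up to $r_\lambda(0)$.
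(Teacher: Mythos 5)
Your reduction of the lemma to the continuity of $\Theta(\lambda)=\theta_\lambda(r_\lambda(0))$ identifies the right structure (node count as an integer part of the angle, transitions occurring exactly at double zeros, which yield compactly supported solutions by zero extension — that second half of your argument is essentially correct). But the proof has a genuine gap, and you name it yourself: you never prove continuity of $\Theta$, or even of $r_\lambda(0)$, at the transition values, offering only that "the phase-plane picture makes this plausible." That step \emph{is} the lemma. Worse, the tools available in the paper do not support it directly: the generalized polar coordinates \eqref{uvxy} are singular at the origin of the phase plane, and the Rotation Lemma~\ref{AngularVelocity} controls the angular velocity only in the region $\rho\ge\sigma_0^p$, i.e.\ away from the origin. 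At a transition value $\lambda_0$ the trajectory of $(u_{\lambda_0},v_{\lambda_0})$ passes \emph{through} the origin; nearby trajectories pass close to it, in a region where nothing bounds how long they linger or how the angle behaves, and where uniqueness of the flow may fail (Proposition~\ref{unique-ext} only guarantees uniqueness \emph{up to} a double zero). So one cannot conclude that $r_\lambda(0)\to r_{\lambda_0}(0)$, nor that $\Theta(\lambda)\to\Theta(\lambda_0)$; the implicit-function-theorem argument is explicitly unavailable because the energy crossing is tangential, and no substitute is supplied.

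The paper closes exactly this gap by a different, quantitative mechanism. It partitions $[A,\infty)$ into the sets $A_k$ (trajectories that never reach the origin, with $N(\lambda)=k$) and $I_k$ (trajectories reaching the origin after $k$ simple zeros), and instead of continuity of a hitting time it proves the weaker but sufficient statement $(iii)$: every $\lambda_0\in I_k$ has a neighborhood contained in $A_k\cup A_{k+1}\cup I_k$. The proof of $(iii)$ introduces the weighted energy $H_\lambda(r)=r^{p'(N-1)}E_\lambda(r)$, whose derivative has the sign of $F(u_\lambda)$, and shows: if a solution with $\lambda$ near $\lambda_0$ had $k+2$ zeros before its energy vanished, then after its $(k+1)$-th zero it must descend below $B$, and along the stretch where $u_\lambda\in[B/2,B/4]$ the quantity $H_\lambda$ decreases by a fixed amount bounded away from zero, while $H_\lambda$ was already smaller than an adjustable $2\varepsilon$ when entering that region; choosing $\varepsilon$ small forces $H_\lambda<0$, contradicting $E_\lambda\ge 0$ at the $(k+2)$-th zero. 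This estimate is what replaces your unproven continuity claim, and without it (or some equivalent) your argument does not go through.
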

\vs\noindent\emph{Proof.} This lemma is proved by defining for any $k\in\mathbb N_0:=\mathbb N\cup\{0\}$ the sets
\[\label{ak}
A_k:=\{\lambda\ge A\ :\ (u_\lambda(r),v_\lambda(r))\not=(0,0)\quad\mbox{for all }r\ge0\,,\mbox{ and } N(\lambda)=k\}\;,
\]
\[\label{ik}
I_k:=\{\lambda\ge A\ :\ (u_\lambda(r_\lambda(0)),v_\lambda(r_\lambda(0)))=(0,0)\quad\mbox{and } N_{[0,r_\lambda(0))}(\lambda)=k\}\;.
\]
Recall that $r_\lambda(a):=\inf\{r\geq 0\,:\,E_\lambda(r)=a\}$, and $E_\lambda$ has been defined by~\eqref{funct-10}. Notice that we have
$$
[A,\infty)=\big(\cup_{k\in\mathbb N_0}I_k\big)\cup\big(\cup_{k\in\mathbb N_0}A_k\big)\;.
$$
Indeed, let $\lambda\ge A$. Then $N(\lambda)=j$ for some $j\in\mathbb N_0$. If $u_\lambda(r_\lambda(0))\not=0$, then $u_\lambda$ does not have any double zero in $[0,\infty)$. Indeed, assume by contradiction that $r_1>r_\lambda(0)$ is a double zero of $u_\lambda$. Then by the monotonicity of $E_\lambda$, $E_\lambda(r)\equiv 0$ in $[r_\lambda(0),r_1]$. But then also $E_\lambda'(r)\equiv 0$ in $(r_\lambda(0),r_1)$ implying that $u_\lambda'(r)\equiv 0$ in $(r_\lambda(0),r_1)$ and thus $u_\lambda(r_\lambda(0))=u_\lambda(r_1)=0$, a contradiction. Hence $\lambda\in A_j$. If $u_\lambda(r_\lambda(0))=0$, then by the definition of $r_\lambda(0)$ we also have $u_\lambda'(r_\lambda(0))=0$ hence $\lambda\in I_j$. Also, observe that the sets $A_i$, $I_j$ are disjoint for any $i,\ j$, and for $i\not=j$, $A_i\cap A_j=\emptyset$ and $I_i\cap I_j=\emptyset$.

\medskip We also observe that if $\lambda\in A_j$, then necessarily $\lim_{r\to\infty}E_\lambda(r)<0$ (see Corollary~\ref{coro31}), and if $\lambda\in I_j$, then two cases may occur:
\[
\mbox{either }\lim_{r\to\infty}E_\lambda(r)<0\quad\mbox{or}\quad\lim_{r\to\infty}E_\lambda(r)=0\;.
\]
This due to the possible non-uniqueness of solutions to the initial value problem \eqref{ivp}, a solution could reach a double zero but not remain identically zero after that.

\medskip The proof of Lemma~\ref{NodeChanges} is a consequence of the following technical result.
\begin{prop} With the above notation, we have:
\begin{itemize}
\item[$(i)$] $A_k$ is open in $[A,\infty)$,
\item[$(ii)$] $A_k\cup I_k$ is bounded,
\item[$(iii)$] if $\lambda_0\in I_k$, then there exists $\delta>0$ such that $(\lambda_0-\delta,\lambda_0+\delta)\subset A_k\cup A_{k+1}\cup I_k$,
\item[$(iv)$] $\sup A_k\in I_{k-1}\cup I_k$, where we set $I_{-1}=\emptyset$ and,
\item[$(v)$] $\sup I_k\in I_k$.
\end{itemize}
\end{prop}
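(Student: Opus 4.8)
The plan is to establish the five properties essentially in the order listed, since later items lean on the continuous dependence and energy-monotonicity facts already available. For $(i)$, I would argue that the map $\lambda\mapsto u_\lambda$ (together with $v_\lambda$) depends continuously on $\lambda$ in the $C^1$ sense on compact $r$-intervals, which is legitimate in the region where the flow \eqref{Syst:rho-theta} has unique solutions (Proposition~\ref{unique-ext}). If $\lambda\in A_k$, then $(u_\lambda,v_\lambda)$ never vanishes and $N(\lambda)=k$; since $\lambda\in A_k$ forces $\lim_{r\to\infty}E_\lambda(r)<0$, there is a finite radius beyond which $E_\lambda<0$ and the solution stays strictly on one side of zero, so the nodes of $u_\lambda$ are finitely many simple zeros occurring in $[0,r_\lambda(0))$. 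A simple zero is a transversal crossing of $\{u=0\}$ in the phase plane, hence it persists and its count is locally constant under small perturbations of $\lambda$; this gives an open neighborhood still lying in $A_k$.

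For $(ii)$ I would combine the boundedness of the nodal count from above with Lemma~\ref{lambdaBig}, which asserts $N(\lambda)\to\infty$ as $\lambda\to\infty$: once $N(\lambda)>k$ the parameter cannot belong to $A_k$ or $I_k$, so both sets are contained in a bounded interval $[A,\lambda_{k+1}]$. For $(iii)$, the point $\lambda_0\in I_k$ has $(u_{\lambda_0}(r_{\lambda_0}(0)),v_{\lambda_0}(r_{\lambda_0}(0)))=(0,0)$, i.e.\ the solution reaches a double zero exactly at the energy level $0$ after $k$ sign changes. The idea is that nearby solutions either still hit the double zero (landing in $I_k$) or pass to one side of it; by the Rotation Lemma controlling the angular velocity near the origin, a solution that barely misses the double zero can wind at most one extra time before the energy drops below $0$, so it acquires at most one additional node. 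Thus small perturbations produce $N$ equal to $k$ or $k+1$, giving the claimed inclusion into $A_k\cup A_{k+1}\cup I_k$.

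Property $(iv)$ is the delicate one and I expect it to be the main obstacle. Let $\lambda^\ast=\sup A_k$; by $(ii)$ it is finite and by $(i)$ it is not interior to $A_k$, so $u_{\lambda^\ast}$ must develop a degeneracy. I would take a sequence $\lambda_n\in A_k$ with $\lambda_n\uparrow\lambda^\ast$ and pass to the limit using continuous dependence, arguing that the limiting solution $u_{\lambda^\ast}$ must reach a double zero (otherwise $\lambda^\ast$ would again be interior to some $A_j$, contradicting supremum). The count of completed sign changes at that double zero is either $k$ or $k-1$: it can drop by one because the last extremum of $u_{\lambda_n}$ may collapse onto the axis in the limit, merging a near-node with the terminal double zero. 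Ruling out a drop of two or more, and confirming that the limit genuinely lands in $I_{k-1}\cup I_k$ rather than inside some $A_j$, is where the careful phase-plane bookkeeping via \eqref{uvxy} and the monotonicity of $E_\lambda$ must be assembled. Finally $(v)$ follows similarly but more easily: for $\lambda^\ast=\sup I_k$, a sequence $\lambda_n\in I_k$ converging up to $\lambda^\ast$ yields, by continuous dependence and the closedness of the condition ``reaches a double zero at energy level $0$ after $k$ nodes,'' a limiting solution that still reaches such a double zero with exactly $k$ prior sign changes, so $\lambda^\ast\in I_k$.
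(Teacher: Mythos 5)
The central gap is in item $(iii)$, and it is fatal to the argument as proposed. You invoke the Rotation Lemma to claim that a solution which ``barely misses the double zero can wind at most one extra time before the energy drops below $0$''. But Lemma~\ref{AngularVelocity} gives a \emph{lower} bound $-\,\theta_\lambda'>\omega$ on the angular velocity, and only in the regime $r\ge r_0$, $\rho_\lambda\ge\sigma_0^p$, i.e.\ \emph{far} from the origin of the phase plane; it gives no information whatsoever near $(u,v)=(0,0)$, which is exactly the regime relevant for a solution passing near a double zero, and in any case a lower bound on winding speed can never cap the number of additional nodes. The paper's proof of $(iii)$ uses an entirely different, quantitative mechanism: the weighted energy $H_\lambda(r)=r^{p'(N-1)}\,E_\lambda(r)$. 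One picks $\bar r<r_0=r_{\lambda_0}(0)$ with $0<u_{\lambda_0}(\bar r)<A/2$ and $H_{\lambda_0}(\bar r)<\varepsilon$, and shows, by integrating $H_{\lambda_n}'(r)=p'(N-1)\,r^{p'(N-1)-1}F(u_{\lambda_n}(r))$ and using that $F(u_{\lambda_n})<0$ on the relevant range, that any nearby solution having $k+2$ zeros (the first $k+1$ simple) would satisfy $H_{\lambda_n}<0$ strictly before its $(k+2)$-th zero, contradicting $E_{\lambda_n}\ge 0$ at that zero. Without this estimate (or an equivalent one), $(iii)$ is unproved, and $(iii)$ is the technical heart of the whole proposition.

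This gap then propagates, because in the paper $(iv)$ and $(v)$ are not independent limiting arguments: they follow almost formally from $(i)$, $(iii)$, the decomposition $[A,\infty)=\bigl(\cup_k A_k\bigr)\cup\bigl(\cup_k I_k\bigr)$, and the pairwise disjointness of the pieces. For $(iv)$: $\sup A_k$ cannot lie in any $A_j$ (each is open and they are disjoint), hence lies in some $I_j$ with $j\le k$ by continuous dependence on $[0,r_0-\varepsilon]$; then applying $(iii)$ at $\sup A_k$ and intersecting with $A_k$ forces $j\in\{k-1,k\}$. You instead attempt a direct compactness argument and explicitly concede that ruling out a drop of two or more nodes and landing in $I_{k-1}\cup I_k$ ``is where the careful phase-plane bookkeeping must be assembled'' --- that missing bookkeeping is precisely what $(iii)$ supplies. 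Similarly, your $(v)$ rests on an asserted ``closedness'' of the condition defining $I_k$, which is essentially the statement to be proved (note that continuous dependence may fail past a double zero, and the position of the double zero must be controlled); in the paper $(v)$ too is read off from $(iii)$. Finally, in $(i)$, persistence of transversal zeros only yields $N\ge k$ for nearby parameters; to conclude $\lambda'\in A_k$ you also need $r_{\lambda'}(0)\to r_{\bar\lambda}(0)$ and that nearby solutions reach strictly negative energy (precluding any later double zero and any extra node). The paper obtains this from the signs $E_{\lambda}(r_{\bar\lambda}(0)+\varepsilon/2)<0$ and $E_{\lambda}(r_{\bar\lambda}(0)-\varepsilon/2)>0$ for $\lambda$ close to $\bar\lambda$, a step your sketch omits.
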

\begin{proof}
\noindent $(i)$ $A_k$ is open in $[A,\infty)$: Indeed, if $\bar\lambda\in A_k$, then in particular $(u_{\bar\lambda}(\bar r),v_{\bar\lambda}(\bar r))\not=(0,0)$, where $\bar r=r_{\bar\lambda}(0)$. Then there exists $\varepsilon_0>0$ such that the solution of \eqref{ivp} is unique in $[0,r_{\bar\lambda}(0)+\varepsilon]$ and $E_{\bar\lambda}(r_{\bar\lambda}(0)+\varepsilon/2)<0$ for all $\varepsilon\in(0,\varepsilon_0]$, and thus there exists $\delta>0$ such that
\[
E_{\lambda}(r_{\bar\lambda}(0)+\varepsilon/2)<0
\]
for all $\lambda\in(\bar\lambda-\delta,\bar\lambda+\delta)$ implying that $r_\lambda(0)\le r_{\bar\lambda}(0)+\varepsilon/2$. On the other hand, for the same reason, there exists $\delta'>0$ such that
\[
E_{\lambda}(r_{\bar\lambda}(0)-\varepsilon/2)>0
\]
for all $\lambda\in(\bar\lambda-\delta',\bar\lambda+\delta')$ implying that $r_\lambda(0)\ge r_{\bar\lambda}(0)-\varepsilon/2$. We conclude then that $r_\lambda(0)\to r_{\bar\lambda}(0)$. Hence the openness of $A_k$ follows from the continuous dependence of solutions in the initial value $\lambda$.

\medskip\noindent $(ii)$ The boundedness of $A_k\cup I_k$ is a consequence of Lemma~\ref{lambdaBig}.

\medskip\noindent $(iii)$ The proof of this statement follows that of \cite[Lemma 2.3]{Cortazar-GarciaHuidobro-Yarur}.
\begin{figure}[ht]
\begin{center}
\includegraphics[keepaspectratio, width=12cm]{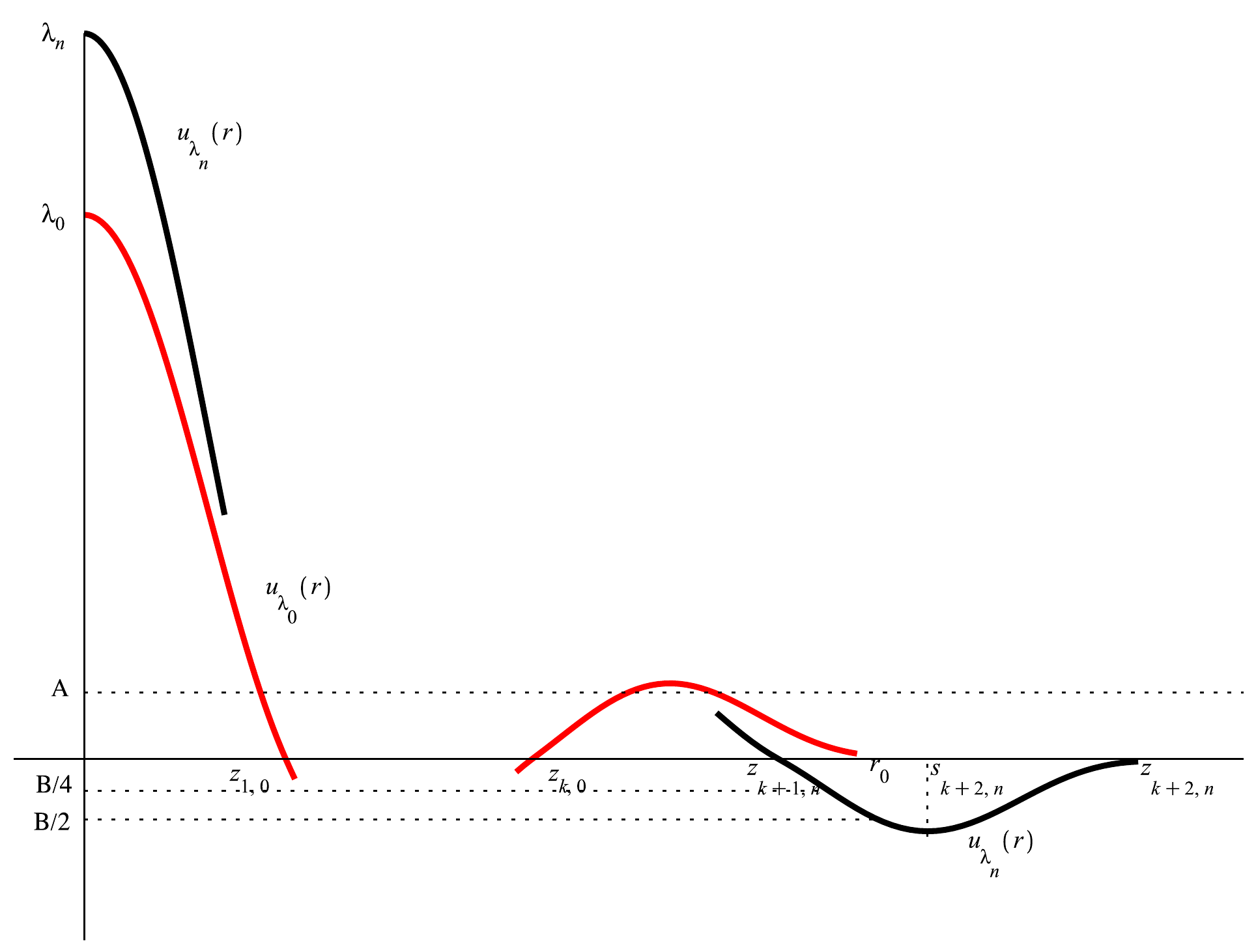}
\end{center}
\caption{Situation for $(iii)$.}
\end{figure}

Let $\lambda_0\in I_k$, set $r_0=r_{\lambda_0}(0)$ and let
\[
0<z_{1,0}<z_{2,0}<\ldots<z_{k,0}<r_0
\]
denote the $k$ zeros of $u_{\lambda_0}$ in $(0,r_0)$.

Assume first that $u_{\lambda_0}$ is decreasing in $(r_0-2\,\varepsilon_0,r_0)$ for some $\varepsilon_0>0$, so that it reaches a last maximum point at some $s_{k,0}\in(z_{k,0}, r_0)$. Let
\be\label{H}
H_\lambda(r):=r^{p'(N-1)}\,E_\lambda(r)\;.
\ee
As $u_{\lambda_0}(r_0)=0$ and $H_{\lambda_0}(r_0)=0$, given $\varepsilon>0$, there exists $\bar r<r_0$ such that
\[
0<u_{\lambda_0}(\bar r)<\frac A2\;,\quad\mbox{and}\quad H_{\lambda_0}(\bar r)<\varepsilon\;.
\]
Hence by continuous dependence of solutions to \eqref{ivp} in the initial data in any compact subset of $[0,r_0)$, there exists $\delta_0>0$ such that for $\lambda\in(\lambda_0-\delta_0,\lambda_0+\delta_0)$, the solution~$u_\lambda$ satisfies
\be\label{mm2}
0<u_\lambda(\bar r)<A\;,\quad H_\lambda(\bar r)<2\,\varepsilon\quad\mbox{and $u_\lambda$ has at least $k$ simple zeros in $[0,r_0)$}\;,
\ee
that is,
\be\label{mm1}
(\lambda_0-\delta_0,\lambda_0+\delta_0)\subset\big(\cup_{j\ge k}A_j\big)\cup\big(\cup_{j\ge k}I_j\big)\;.
\ee
Now we argue by contradiction and assume that there is a sequence $\{\lambda_n\}$ converging to $\lambda_0$ as $n\to\infty$ such that
\[
\lambda_n\not\in A_k\cup A_{k+1}\cup I_k\;.
\]
{}From \eqref{mm1},
\[
\lambda_n\in\big(\cup_{j\ge k+2}A_j\big)\cup\big(\cup_{j\ge k+1}I_j\big)\;,
\]
that is, the solution $u_{\lambda_n}$ has at least $k+2$ zeros and at least the first $k+1$ zeros are simple. Let us denote these zeros by
\[
0<z_{1,n}<z_{2,n}<\ldots<z_{k,n}<z_{k+1,n}<z_{k+2,n}\;.
\]
See Figure 5. By the choice of $\bar r$ and \eqref{mm2}, $u_{\lambda_n}$ decreases in $[\bar r, z_{k+1,n}]$. Let us denote by $s_{k+1,n}$ the point in $(z_{k+1,n},z_{k+2,n})$ where $u_{\lambda_n}$ reaches its minimum value. As $E_{\lambda_n}(z_{k+2,n})\ge0$, we must have that
\[
u_{\lambda_n}(s_{k+1,n})<B\;.
\]
Let us denote by $r_{1,n}<r_{2,n}$ the unique points in $(z_{k+1,n},s_{k+1,n})$ where
\[
u_{\lambda_n}(r_{1,n})=\frac{B}4\;,\quad u_{\lambda_n}(r_{2,n})=\frac{B}{2}\;.
\]
{}From \eqref{H}, we have that
\be\label{Hder}
H'_{\lambda_n}(r)=p'\,(N-1)\,r^{p'(N-1)-1}\,F(u_{\lambda_n}(r))\;.
\ee
Therefore, using the first estimate in \eqref{mm2}, we have that for $n$ large enough, $H'_{\lambda_n}( r)<0$ for $r\in[\bar r,z_{k+1,n}]$ and thus by the second in \eqref{mm2}, $H_{\lambda_n}(z_{k+1,n})<2\,\varepsilon$\;.

Integrating now \eqref{Hder} over $[z_{k+1,n},r_{2,n}]$, and using that $F(u_{\lambda_n}(t))<0$ in this range and $p'(N-1)-1= \frac{p}{p-1}(N-1)-1\ge p-1>0$, we find that
\ben
H_{\lambda_n}(r_{2,n})-H_{\lambda_n}(z_{k+1,n})&=&-\,p'\,(N-1)\int_{z_{k+1,n}}^{r_{2,n}}t^{p'(N-1)-1}|F(u_{\lambda_n}(t))|\;dt\\
&\le&-\,p'\,(N-1)(z_{k+1,n})^{p'(N-1)-1}\int_{z_{k+1,n}}^{r_{2,n}}|F(u_{\lambda_n}(t))|\;dt\\
&\le&-\,p'\,(N-1)(z_{k+1,n})^{p'(N-1)-1}\int_{r_{1,n}}^{r_{2,n}}|F(u_{\lambda_n}(t))|\;dt\\
&\le&-\,C\,p'\,(N-1)(z_{k+1,n})^{p'(N-1)-1}(r_{2,n}-r_{1,n})\;,
\een
where
\[
C:=\inf_{s\in[\frac{B}{2},\frac{B}4]}|F(s)|\;.
\]
But from the mean value theorem, and for $n$ large enough, we have that
\[
\frac{|B|}4=|u_{\lambda_n}(r_{2,n})-u_{\lambda_n}(r_{1,n})|\le C_{\lambda_0+1}\,(r_{2,n}-r_{1,n})\;,
\]
and
\[
\frac{\lambda_0}{2}\le \lambda_n=|u_{\lambda_n}(0)-u_{\lambda_n}(z_{1,n})|\le C_{\lambda_0+1}\,z_{1,n}\le C_{\lambda_0+1}\,z_{k+1,n}\;,
\]
hence
\ben
H_{\lambda_n}(r_{2,n})&\le& 2\,\varepsilon-C\,p'\,(N-1)\,(z_{k+1,n})^{p'(N-1)-1}\,\frac{|B|}{4\,C_{\lambda_0+1}}\\
&\le&2\,\varepsilon-C\,p'\,(N-1)\,\Bigl(\frac{\lambda_0}{2\,C_{\lambda_0+1}}\Bigr)^{p'(N-1)-1}\,\frac{|B|}{4\,C_{\lambda_0+1}}\;.
\een
By choosing from the beginning $\varepsilon\in\Bigl(0,C\,p'\,(N-1)\Bigl(\frac{\lambda_0}{2\,C_{\lambda_0+1}}\Bigr)^{p'(N-1)-1}\frac{|B|}{8C_{\lambda_0+1}}\Bigr)$ we obtain that
\[
H_{\lambda_n}(r_{2,n})= r_{2,n}^{p'(N-1)}E_{\lambda_n}(r_{2,n})<0\;,
\]
contradicting the fact that
\[
E_{\lambda_n}(r_{2,n})\ge E_{\lambda_n}(z_{k+2,n})\ge 0\;.
\]

A similar computation provides a contradiction if we assume that $u_{\lambda_0}$ is increasing in $(r_0-2\,\varepsilon_0,r_0)$ for some $\varepsilon_0>0$. Altogether $(iii)$ is established.

\medskip\noindent $(iv)$ Assume next that $A_k\not=\emptyset$, let $\lambda_0=\sup A_k$ and set $r_0=r_{\lambda_0}(0)$. As $A_j$ is open for every $j\in\mathbb N_0$, $\lambda_0\not\in A_j$ for any $j$ hence $\lambda_0\in I_j$ for some $j$, and by continuous dependence of the solutions in the initial data in $[0,r_0-\varepsilon]$ for $\varepsilon>0$ small enough, $j\le k$. By $(iii)$, there is $\delta>0$ such that $(\lambda_0-\delta,\lambda_0]\subset A_j\cup A_{j+1}\cup I_j$, and since $A_k\cap(\lambda_0-\delta,\lambda_0]\not=\emptyset$, it must be that
\[
A_k\cap (A_j\cup A_{j+1}\cup I_j)\not=\emptyset\;,
\]
hence $j=k$ or $j=k-1$.

\medskip\noindent $(v)$ $\sup I_k\in I_k$: It follows directly from $(iii)$.
\end{proof}

\bigskip\begin{proof}[Proof of Theorem~\ref{Main}] With the notation of the previous lemma one shows by induction that there exists an increasing sequence $\{\lambda_k\}$, $\lambda_k\to +\infty$, such that $\lambda_k\in I_k$.

As $A\in A_0$, by $(ii)$ we can set $\lambda_0=\sup A_0$, and by $(iv)$ and $(v)$, $\lambda_0\in I_0$ and $\lambda_0\le \sup I_0\in I_0$. We use now $(iii)$ and find $\delta>0$ such that
\[
(\sup I_0-\delta, \sup I_0+\delta)\subset A_0\cup A_1\cup I_0\;.
\]
Since $(\sup I_0,\sup I_0+\delta)\cap A_0=\emptyset$ by the definition of $\lambda_0$ and $(\sup I_0,\sup I_0+\delta)\cap I_0=\emptyset$ by the definition of $\sup I_0$, it must be that
\[
(\sup I_0,\sup I_0+\delta)\subset A_1
\]
implying that
\[
A_1\not=\emptyset\quad\mbox{and}\quad \lambda_0\le \sup I_0<\lambda_1:=\sup A_1\;.
\]
By $(iv)$, $\lambda_1\in I_0\cup I_1$, but as $\sup I_0<\lambda_1$, it must be that $\sup A_1\in I_1$. Then
\[
I_1\quad\mbox{is not empty and}\quad \lambda_1\le \sup I_1\;.
\]
We use again $(iii)$ to find $\delta>0$ such that
\[
(\sup I_1-\delta,\sup I_1+\delta)\subset A_1\cup A_2\cup I_1\;,
\]
and again deduce that
\[
(\sup I_1,\sup I_1+\delta)\subset A_2\;,
\]
hence $A_2\not=\emptyset$ and thanks to $(ii)$ we can set $\lambda_2=\sup A_2$, $\lambda_0<\lambda_1\le \sup I_1<\lambda_2$ and $\lambda_2\in I_2$. We continue this procedure to obtain the infinite strictly increasing sequence $\{\lambda_k\}$, defined by $\lambda_k=\sup A_k$ with $\lambda_k\in I_k$.
\end{proof}

\section{Qualitative properties of the solutions}\label{Section:Qualitative}

Several qualitative properties can be deduced from our intermediate results and from their proofs. Without entering the details let us summarize the most striking ones.

When $\lambda$ varies, the number of nodes changes of at most one. The energy of any solution decreases as $r$ increases and converges to a finite limit as $r\to\infty$. More precisely, solutions are of two types: either the limit of their energy is negative  or the limit of the energy is zero, and the corresponding solutions are compactly supported.

Solutions which have a double zero can be compactly supported or not, as in this case uniqueness may be lost.

\medskip For solutions with compact support, the size of the support increases with the number of nodes, and diverges as the number of nodes goes to infinity. This is a consequence of  Lemma \ref{AngularVelocity} and Proposition~\ref{gamaza}, as can be easily proved arguing by contradiction. With the generality of Theorem~\ref{Main}, it is not easy to give quantitative results but one can estimate the size of the support of the solutions and the number of nodes for large values of $\lambda$, as the following proposition shows.
\begin{prop}\label{Size} Let $1<p<N$. Under the assumptions of Theorem~\ref{Main}, we have that $r_\lambda(0)$ and $N(\lambda)$
are bounded below by
 $$C \Bigl(\frac{\lambda^{\frac{N(p-1)}{N-p}}}{f(\lambda)}\Bigr)^{\frac{N-p}{p(N-1)}} $$
  where $C$ is a positive constant independent of $\lambda$.

  In particular, if $\lim\limits_{\lambda\to\infty}\dis\frac{{\lambda^{\frac{N(p-1)}{N-p}}}}{f(\lambda)}=\infty$, then $r_\lambda(0)\to\infty$ and $N(\lambda)\to\infty$ as $\lambda\to\infty.$
 \end{prop}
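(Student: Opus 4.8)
The plan is to reduce both lower bounds to a single geometric quantity --- the length of the radial interval on which the solution still carries high energy --- and then to estimate that length through a power-law decay of the energy. Write $|u_\lambda'|^p=p'\,(E_\lambda-F(u_\lambda))$ and recall that in the coordinates of Section~\ref{Section:Coordinates} one has $\rho_\lambda=|u_\lambda|^p+(p-1)\,|u_\lambda'|^p=|u_\lambda|^p+p\,(E_\lambda-F(u_\lambda))$. From this I read off a constant $E^\ast:=\sigma_0^p/p+\max_{|s|\le\sigma_0}F(s)>0$, \emph{independent of }$\lambda$, such that $E_\lambda(r)\ge E^\ast$ forces $\rho_\lambda(r)\ge\sigma_0^p$. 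Setting $R_\lambda:=\sup\{r\ge0:\ E_\lambda(r)\ge E^\ast\}$, the monotonicity of $E_\lambda$ together with $E^\ast>0$ gives $r_\lambda(0)\ge R_\lambda$, while on $[r_0,R_\lambda]$ the Rotation Lemma~\ref{AngularVelocity} yields $-\,\theta_\lambda(R_\lambda)\ge\omega\,(R_\lambda-r_0)$, and Proposition~\ref{Increasing} then gives $N(\lambda)\ge \omega\,(R_\lambda-r_0)/\pi_p-1$. Thus it suffices to bound $R_\lambda$ from below by the stated quantity.

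Next I would pin down the natural length scale of the first descent, $\mu:=\bigl(\lambda^{p-1}/f(\lambda)\bigr)^{1/p}$. Since $E_\lambda$ is non-increasing and $F$ is increasing past $A$, one has $|u_\lambda(r)|\le\lambda$ for all $r$; integrating the equation gives the pointwise bound $|u_\lambda'(r)|^{p-1}\le (r/N)\,\max_{|s|\le\lambda}|f(s)|$. Inserting this into $\lambda=\int_0^{r_1}(-u_\lambda')\,dr$, where $r_1$ denotes the first zero of $u_\lambda$, and using that $1/(p-1)+1=p'$, one obtains $r_1\ge c\,\mu$ for a constant $c>0$ (invoking (H5) to compare $\max_{|s|\le\lambda}|f(s)|$ with $f(\lambda)$). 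Evaluated at this scale, the weighted energy $H_\lambda(r):=r^{p'(N-1)}E_\lambda(r)$ of~\eqref{H} satisfies $H_\lambda(r_1)\ge c\,\mu^{p'(N-1)}E_\lambda(r_1)$, so the point becomes to show that at most a bounded fraction of the energy is lost on $[0,r_1]$, i.e. $E_\lambda(r_1)\ge c_0\,F(\lambda)$ with $c_0\in(0,1)$; I would obtain this by comparison with the autonomous ($r$-independent) equation, whose first descent is energy conserving.

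The heart of the argument is the claim that, once the oscillatory regime has started, the energy cannot decay faster than $r^{-p'(N-1)}$; equivalently that $H_\lambda$ does not decrease substantially on $[r_1,R_\lambda]$, so that $H_\lambda(R_\lambda)\ge c\,H_\lambda(r_1)$. Here I use the identity $H_\lambda'(r)=p'(N-1)\,r^{p'(N-1)-1}\,F(u_\lambda(r))$ of~\eqref{Hder}, whose sign is that of $F(u_\lambda)$ and hence alternates along each oscillation. The mechanism is that, over one oscillation, $u_\lambda$ traverses the region $\{F<0\}$ quickly (where $|u_\lambda'|$ is large) and lingers near its turning points (where $F(u_\lambda)>0$ and $|u_\lambda'|$ is small), so that the $r$-weighted integral of $F(u_\lambda)$ over a full oscillation is non-negative. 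I expect this oscillation-by-oscillation energy bookkeeping to be the \emph{main obstacle}: one must control both the intra-oscillation dips of $H_\lambda$ and the variation of the weight $r^{p'(N-1)-1}$ across an oscillation, using that while $E_\lambda\ge E^\ast$ the oscillations have controlled length (as in the estimates of Proposition~\ref{basic2}). The same autonomous comparison that gives $E_\lambda(r_1)\ge c_0F(\lambda)$ is what makes this sign argument quantitative.

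Granting $H_\lambda(R_\lambda)\ge c\,\mu^{p'(N-1)}F(\lambda)$, I would conclude as follows. Since $E_\lambda(R_\lambda)=E^\ast$, we have $R_\lambda^{p'(N-1)}E^\ast=H_\lambda(R_\lambda)\ge c\,\mu^{p'(N-1)}F(\lambda)$, whence
\[
R_\lambda\ge c\,\mu\,F(\lambda)^{\frac1{p'(N-1)}}.
\]
Finally I invoke (H6) in the form $F(\lambda)\ge F(\theta\lambda)\ge c\,\lambda\,f(\lambda)$ for large $\lambda$ and substitute $\mu=(\lambda^{p-1}/f(\lambda))^{1/p}$; since $1/\bigl(p'(N-1)\bigr)=(p-1)/\bigl(p(N-1)\bigr)$, a direct computation of exponents gives the $\lambda$-power $\tfrac{p-1}p+\tfrac{p-1}{p(N-1)}=\tfrac{N(p-1)}{p(N-1)}$ and the $f(\lambda)$-power $-\tfrac1p+\tfrac{p-1}{p(N-1)}=-\tfrac{N-p}{p(N-1)}$, that is
\[
R_\lambda\ge C\Bigl(\lambda^{\frac{N(p-1)}{N-p}}/f(\lambda)\Bigr)^{\frac{N-p}{p(N-1)}}.
\]
Combined with $r_\lambda(0)\ge R_\lambda$ and $N(\lambda)\ge \omega R_\lambda/\pi_p-1$ from the first step, this proves the asserted lower bounds; the final implication is then immediate, since the right-hand side tends to $\infty$ exactly when $\lambda^{N(p-1)/(N-p)}/f(\lambda)\to\infty$.
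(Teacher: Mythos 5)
Your overall skeleton --- reducing both bounds to a lower bound on the last radius $R_\lambda$ at which the energy exceeds a fixed threshold (via Proposition~\ref{Energie-rho}, the Rotation Lemma~\ref{AngularVelocity} and Proposition~\ref{Increasing}), the first-descent estimate $r_1\ge c\,\mu$ with $\mu=(\lambda^{p-1}/f(\lambda))^{1/p}$, and the final exponent arithmetic --- is the same as the paper's, where the roles of $R_\lambda$ and $r_1$ are played by $r_\lambda(0)$ and by $S_{\theta,\lambda}:=\inf\{r>0:\,u_\lambda(r)=\theta\,\lambda\}$. (Incidentally, your bound $|u_\lambda|\le\lambda$ is only justified before the first zero, but that is all you use.) However, the two steps that carry the analytic content are left as unproven claims, and the methods you propose for them are not viable as sketched. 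First, the retention estimate $E_\lambda(r_1)\ge c_0\,F(\lambda)$ cannot be obtained by a soft comparison with the autonomous equation: on $[0,r_1]$ the dissipation $(N-1)\int_0^{r_1}|u_\lambda'|^p\,r^{-1}dr$ is, as far as the available bounds show, of the same order $\lambda\,f(\lambda)$ as $F(\lambda)$ itself (that these two quantities are comparable is precisely the content of $(\mathrm H6)$), so the damping term is not a perturbation there and no fixed fraction of the energy is obviously conserved; for a general $f$ with no scale invariance this claim would need a genuine proof, and none is given. Second, the claim $H_\lambda(R_\lambda)\ge c\,H_\lambda(r_1)$, which you propose to prove by oscillation-by-oscillation bookkeeping of the sign of $H_\lambda'=p'(N-1)\,r^{p'(N-1)-1}F(u_\lambda)$, is exactly what you describe as the main obstacle and is not carried out; controlling the dips of $H_\lambda$ while $u_\lambda$ crosses the well $(B,A)$ and the variation of the weight within each oscillation is genuinely delicate.

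Both gaps are closed simultaneously by the one idea your proposal is missing: shift the energy by $\bar F=-\min_{s\in[B,A]}F(s)$. Since $F\ge-\,\bar F$ everywhere, one has $|u_\lambda'|^p=p'\,(E_\lambda-F(u_\lambda))\le p'\,(E_\lambda+\bar F)$, whence
\[
\frac{d}{dr}\,(E_\lambda+\bar F)=-\,\frac{N-1}{r}\,|u_\lambda'|^p\ge-\,\frac{(N-1)\,p'}{r}\,(E_\lambda+\bar F)\;,
\]
so that $r^{p'(N-1)}\,(E_\lambda+\bar F)$ is \emph{nondecreasing} in $r$. This is precisely your power-law lower bound on the energy decay, obtained in one line and with no bookkeeping. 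Moreover, the paper evaluates this monotone quantity not at the first zero $r_1$ but at $S_{\theta,\lambda}$, where the inequality $E_\lambda(S_{\theta,\lambda})\ge F(u_\lambda(S_{\theta,\lambda}))=F(\theta\,\lambda)$ holds for free because the potential energy alone suffices; this removes any need for the retention estimate on the first descent, and $(\mathrm H6)$ then converts $F(\theta\,\lambda)$ into $c\,\lambda\,f(\lambda)$. With these two substitutions your outline becomes the paper's proof; as written, it is incomplete at its two central steps.
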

\begin{proof} Let $\theta\in (0,1)$ be as in (H6), and for $\lambda>0$, let $S_{\theta,\lambda}:=\inf\{r>0\,:\,u_\lambda(r)=\theta\,\lambda\}$. It can be easily shown that a solution of~\eqref{ivp} satisfies
\[
(1-\theta)\,\lambda = \lambda-u_\lambda(S_{\theta,\lambda}) =\int_0^{S_{\theta,\lambda}}\left(\frac 1{r^{N-1}}\int_0^rs^{N-1}f(u_\lambda(s)) \; ds\right)^{1/(p-1)}\;dr\;.
\]
As a consequence of the monotonicity of $u_\lambda$ in $[0, S_{\theta,\lambda}]$ and (H5), for $\theta\lambda$ large enough we obtain
\be\label{ult}
N^{1/(p-1)}\,p'\,\frac{(1-\theta)\,\lambda} {\left[f(\lambda)\right]^{1/(p-1)}} \leq \left(S_{\theta,\lambda}\right)^{p'} \leq N^{1/(p-1)}\,p'\,\frac{(1-\theta)\,\lambda} {\left[f(\theta\,\lambda)\right]^{1/(p-1)}}\;.
\ee
On the other hand, since
\[
\frac d{dr}(E_\lambda+{\bar F})=-\frac{N-1}r\,|v|^{p'} \geq-\frac{N-1}r\,p'\,(E_\lambda+{\bar F})\;,
\]
for $S_{\theta,\lambda}\leq r\leq r_\lambda(0)$, we obtain
\[
\frac{E_\lambda(r)+{\bar F}}{F(\theta\,\lambda)+{\bar F}}\geq \left(\frac{S_{\theta,\lambda}}r\right)^{(N-1)\,p'}.\label{dcyEny}
\]
Hence
\ben
r_\lambda(0)^{p'}&\geq& S_{\theta,\lambda}^{p'} \left(\frac{F(\theta\,\lambda)}{{\bar F}}\right)^{\frac{1}{N-1}}\\
&\geq&\frac{1}{\bar F^{1/(N-1)}}S_{\theta,\lambda}^{p'}\left(\frac{F(\theta\,\lambda)}{{\lambda f(\lambda)}}\right)^{\frac{1}{N-1}}(\lambda f(\lambda))^{\frac{1}{N-1}}\\
\mbox{from (H6)}&\geq& CS_{\theta,\lambda}^{p'}(\lambda f(\lambda))^{\frac{1}{N-1}}\\
\mbox{from the first in \eqref{ult}}&\geq& C\Bigl(\frac{\lambda^{\frac{N(p-1)}{N-p}}}{f(\lambda)}\Bigr)^{\frac{N-p}{(p-1)(N-1)}}.
\een
Hence,
$$r_\lambda(0)\ge C \Bigl(\frac{\lambda^{\frac{N(p-1)}{N-p}}}{f(\lambda)}\Bigr)^{\frac{N-p}{p(N-1)}}.$$
This proves the assertion on the size of the support. The conclusion on the number of nodes follows from Lemma~\ref{AngularVelocity}.\end{proof}

\appendix\section{}\label{Section:Appendix}

In this Appendix, for sake of completeness, we state some basic results concerning the initial value problem \eqref{ivp}. We begin with a result on the existence of solutions.
\begin{prop}\label{Existence} Suppose that assumption $(\mathrm H1)$ holds. If $\lim_{u\to\pm\infty}f(u)=\pm\infty$, for any fixed $\lambda\in\R$, then~(\ref{ivp}) has a solution defined in $[0,\infty)$. \end{prop}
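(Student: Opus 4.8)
The plan is to recast (\ref{ivp}) as a fixed point problem, produce a local solution near the singular point $r=0$ by a compactness argument, and then continue it to all of $[0,\infty)$ by means of an energy a priori bound. First I would integrate the equation: using $u'(0)=0$, a single integration of $\left(r^{N-1}\,\phi_p(u')\right)'=-\,r^{N-1}\,f(u)$ gives
\[
r^{N-1}\,\phi_p(u'(r))=-\int_0^r s^{N-1}\,f(u(s))\,ds\;,
\]
and, inverting $\phi_p$ (recall that $\phi_{p'}=\phi_p^{-1}$ is odd and continuous) and integrating once more,
\[
u(r)=\lambda+\int_0^r\phi_{p'}\!\left(-\,\frac1{t^{N-1}}\int_0^t s^{N-1}\,f(u(s))\,ds\right)dt=:(Tu)(r)\;.
\]
If $f(u)$ is bounded by $M$ near $r=0$, then $\big|t^{1-N}\int_0^t s^{N-1}f(u)\,ds\big|\le M\,t/N\to 0$ as $t\to0$, so $Tu\in C^1$ with $(Tu)'(0)=0$; thus the fixed points of $T$ are exactly the classical solutions of (\ref{ivp}).

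For local existence I would apply Schauder's fixed point theorem on a small interval $[0,\delta]$. Take the closed convex set $K=\{u\in C([0,\delta]):\|u-\lambda\|_\infty\le1\}$. By (H1), $f$ is bounded, say by $M_\lambda$, on $[\lambda-1,\lambda+1]$, whence for $u\in K$ one has a bound $\|Tu-\lambda\|_\infty\le C(M_\lambda,\delta)$ with $C(M_\lambda,\delta)\to0$ as $\delta\to0$; so $T(K)\subset K$ once $\delta$ is small. The same estimate shows that $(Tu)'$ is uniformly bounded on $[0,\delta]$, so $T(K)$ is equicontinuous and, by Arzel\`a--Ascoli, relatively compact; continuity of $T$ follows from continuity of $f$ and $\phi_{p'}$ together with dominated convergence. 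Schauder's theorem then yields a fixed point, that is, a solution of (\ref{ivp}) on $[0,\delta]$.

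The global extension rests on an energy a priori bound. For the energy $E_\lambda(r)=|u'(r)|^p/p'+F(u(r))$ a direct differentiation along (\ref{ivp}) (using only the equation, not (H2)--(H5)) gives $E_\lambda'(r)=-\,\frac{N-1}{r}\,|u'(r)|^p\le0$, so $E_\lambda(r)\le E_\lambda(0)=F(\lambda)$ on the existence interval. The hypothesis $\lim_{u\to\pm\infty}f(u)=\pm\infty$ forces $F(u)\to+\infty$ as $|u|\to\infty$, so the sublevel set $\{F\le F(\lambda)\}$ is bounded; from $F(u(r))\le F(\lambda)$ I obtain a uniform bound $|u(r)|\le C_\lambda$, and then $|u'(r)|^p/p'\le F(\lambda)-\min_{|s|\le C_\lambda}F(s)$ bounds $|u'|$ as well. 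Finally, let $[0,R)$ be the maximal interval of existence. If $R<\infty$, the uniform bounds on $u$ and $u'$, together with boundedness of $f(u)$, show that $u(R^-)$ and $u'(R^-)$ exist; since $R>0$ is a regular point, writing (\ref{ivp}) as a first order system with continuous right-hand side and invoking Peano's theorem lets me restart and extend past $R$, contradicting maximality. Hence $R=\infty$.

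I expect the main obstacle to be the local existence at the singular point $r=0$: the factor $r^{1-N}$ and the nonlinearity of $\phi_p$ (which, in the absence of a Lipschitz hypothesis on $f$, preclude a straightforward contraction-mapping argument) are what force the compactness approach above and require the careful estimate of the kernel as $t\to0$. Everything downstream---the self-mapping and equicontinuity of $T$, the monotonicity of $E_\lambda$, the coercivity of $F$, and the continuation step---is then routine.
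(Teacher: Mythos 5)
Your proposal is correct and takes essentially the same approach as the paper: the same integral operator $T$, Schauder's fixed point theorem on a small interval $[0,\delta]$ (invariance of a closed ball for $\delta$ small, Arzel\`a--Ascoli compactness, continuity via dominated convergence), and the a priori bounds on $|u|$ and $|u'|$ coming from the monotone energy $E_\lambda$ together with the coercivity of $F$ to pass from local to global existence. The only difference is cosmetic: you spell out the continuation/maximal-interval argument (restarting with Peano's theorem at the regular point $r=R>0$), which the paper leaves implicit in the sentence ``these estimates tell us that the solution can be extended to $[0,\infty)$.''
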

\begin{proof} As before let $F(u)=\int_0^uf(s)ds$, then $F(u)\to \infty$ as $u\to\pm\infty$. Recall that
\[
-\,\bar F=\inf\limits_{u\in\R}{F(u)}\;,
\]
and suppose that $u$ is a solution to~(\ref{eq2}) such that $u(0)=\lambda.$ Then $u$ satisfies
\[
\frac{|u'(r)|^p}{p'}\le \frac{|u'(r)|^p}{p'}+F(u(r))+\bar F\le F(\lambda)+\bar F\;.
\]
Hence $|u'|\le (p'\,C_\lambda)^{1/p}$ with $C_\lambda=F(\lambda)+\bar F$ and $|u|\le |\lambda|+(p'\,C_\lambda)^{1/p}\,r$ for $r>0$ in the domain of definition of $u$. These estimates tell us that if $u$ can be defined in an interval of the form $[0,\delta]$ for $\delta>0$ and small, then this solution can be extended to $[0,\infty).$
\vs
Consider therefore the Banach space $\mathcal C:=C([0,\delta];\R)$ of continuous functions on $[0,\delta]$, endowed with the sup norm $\| \cdot \|_{\infty}$. A solution in $\mathcal C$ will exist if and only if the operator~$T$ defined on $\mathcal C$ by
\be\label{OperT}
T(u)(r):=\lambda-\int_0^r\phi_{p'}\left(\int_0^\tau\frac{s^{N-1}}{\tau^{N-1}}\,f(u(s))\;ds\right)\;d\tau
\ee
has a fixed point. For $\varepsilon>0$ given, let $B(\lambda,\varepsilon)$ be the ball in $\mathcal C$ with center $\lambda$ and radius~$\varepsilon$.
Then if $u\in \overline {B(\lambda,\varepsilon)}$ we have that for all $r\in [0,\delta]$ it holds that $-\varepsilon+\lambda \leq u(r)\leq\varepsilon+\lambda.$ Let us set $m:=\max\limits_{|u-\lambda|\leq\varepsilon}|f(u)|$. Then from~(\ref{OperT}), we find the estimate
\[
|T(u)(r)-\lambda |\leq \displaystyle\frac{\delta^{p'}\,m^{p'-1}}{p'\,N^{p'-1}}\;.
\]
If $\delta$ is small so that $\frac{\delta^{p'} m^{p'-1}}{p'N^{p'-1}}\leq \varepsilon$, we have that $T(\overline {B(\lambda,\varepsilon)})\subset \overline {B(\lambda,\varepsilon)}$.

To show that $T$ is completely continuous, let $\{u_k\}$ be a sequence in $\overline {B(\lambda,\varepsilon)}$ and consider $s,$ $t\in[0,\delta]$. Then
\[
| T(u_k)(t)-T(u_k)(s) |\leq \displaystyle\frac{\delta^{p'-1}\,m^{p'-1}}{N^{p'-1}}\,|t-s|\;.
\]
{}From Ascoli-Arzela theorem it follow that $T$ is compact on $\overline {B(\lambda,\varepsilon)}$. To show that $T$ is continuous let $\{u_k\}$ be a sequence in $\overline {B(\lambda,\varepsilon)}$ such that $u_k\to u\in \overline {B(\lambda,\varepsilon)}$, as $k\to\infty.$ An application of Lebesgue's dominated theorem to
\[
T(u_k)(r):=\lambda-\int_0^r\phi_{p'}\left(\int_0^\tau\frac{s^{N-1}}{\tau^{N-1}}\,f(u_k(s))\;ds\right)\;d\tau\;,
\]
shows that $T(u_k)\to T(u)$ in $\mathcal C$ as $k\to\infty.$

Then by the Schauder fixed point theorem, $T$ possesses a fixed point in $\overline {B(\lambda,\varepsilon)}$ which is what we wanted to prove.
\end{proof}\vs

The last proposition states a unique extendibility result for solutions to the initial value problem \eqref{ivp}; this result has been proved in \cite{Cortazar-GarciaHuidobro-Yarur}.
\begin{prop}\label{unique-ext} Let $f$ satisfy $(\mathrm H1)$-$(\mathrm H2)$. Then solutions to \eqref{ivp} are unique at least until they reach a double zero or a point $u_0=u_\lambda(r_0)$, where $u_\lambda'(r_0)=0$ and $u_0$ is a local maximum of $F$. \end{prop}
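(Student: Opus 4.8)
The plan is to rewrite \eqref{ivp} as the first order system \eqref{Flow}, namely $u'=\phi_q(v)$, $v'=-\frac{N-1}{r}\,v-f(u)$ with $v=\phi_p(u')$, and to run a continuation argument. The right-hand side of \eqref{Flow} is locally Lipschitz except at the two degeneracies dictated by our hypotheses: $f$ need not be Lipschitz at $u=0$ (only continuous and monotone there, by (H1)--(H2)), and $\phi_q(v)=|v|^{q-2}\,v$ fails to be Lipschitz at $v=0$ when $q<2$, i.e. when $p>2$. Given two solutions with the same Cauchy data, let $r_*$ be the supremum of the times up to which they coincide; by continuity they share the value $u_*=u(r_*)$ and the momentum $v_*=v(r_*)$. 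I would show that, unless $(u_*,v_*)$ corresponds to a double zero ($u_*=v_*=0$) or to a critical point at a local maximum of $F$ ($v_*=0$, $f(u_*)=0$ with $F$ having a local maximum at $u_*$), the two solutions must agree on a right neighbourhood of $r_*$, contradicting the definition of $r_*$.

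First I would dispose of the transversal cases. Where $u_*\neq0$ and $v_*\neq0$ the field is locally Lipschitz and standard uniqueness applies. At a \emph{simple zero} ($u_*=0$, $v_*\neq0$), $v$ stays bounded away from $0$, so $\phi_q$ is harmless; moreover $u$ crosses $0$ transversally, so $|u_i(r)|\ge c\,|r-r_*|$ for $r$ near $r_*$. The point is that, because $f$ is monotone on $(b,a)$ by (H2), its effective Lipschitz constant $L(r)$ along the crossing is integrable near $r_*$: monotonicity forces bounded variation, so $\int L(r)\,dr<\infty$ even though $f$ is not Lipschitz at $0$. A Gronwall inequality with this integrable weight then forces the difference of the two solutions to vanish. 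At a \emph{nondegenerate critical point} ($v_*=0$, $u_*\neq0$, $f(u_*)\neq0$) the roles are exchanged: now $f$ is Lipschitz near $u_*\neq0$, while $v$ crosses $0$ transversally since $v'(r_*)=-f(u_*)\neq0$; hence $|v_i(r)|\ge c\,|r-r_*|$ and the monotonicity of $\phi_q$ produces an integrable weight of order $|r-r_*|^{q-2}$ (integrable because $q>1$). Again Gronwall gives uniqueness.

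It remains to treat critical points with $f(u_*)=0$. If $u_*$ is a zero of $f$ at which $F$ has a \emph{local minimum}, I would use energy trapping: since $v_*=0$ gives $E_\lambda(r_*)=F(u_*)$ and $E_\lambda$ is nonincreasing by Proposition~\ref{basic1}$(i)$, one gets $F(u(r))\le F(u_*)$ for $r\ge r_*$, which near a local minimum forces $u\equiv u_*$; both solutions are then constant and coincide. The only remaining critical points with $f(u_*)=0$ are the double zero $u_*=0$ and the critical points at a local maximum of $F$; there $v$ does not cross transversally (as $v'(r_*)=-f(u_*)=0$) and the energy permits the solution to leave $u_*$, so neither of the previous arguments applies --- these are exactly the excluded obstructions. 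Finally, the initial point $r=0$ (where $u'(0)=0$ and the factor $\frac{N-1}{r}$ is singular) is handled through the integral formulation used for the operator $T$ of Proposition~\ref{Existence}, which is a contraction for small $r$ whenever $f(\lambda)\neq0$; if $\lambda$ is itself a zero of $f$ it is covered by the trapping/obstruction dichotomy above.

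I expect the crux to be the simple-zero and the nondegenerate-critical-point cases, the only ones involving a genuinely non-Lipschitz term. The key realization --- and the place where (H2) is indispensable --- is that a transversal crossing together with the monotonicity of the offending function (of $f$ on $(b,a)$, and of $\phi_q$ everywhere) turns the non-Lipschitz singularity into an \emph{integrable} Gronwall weight, which is exactly enough to close the estimate. Conversely, uniqueness must fail precisely when transversality is lost while the energy still allows motion, which singles out the double zeros and the local maxima of $F$ and accounts for the sharp form of the statement.
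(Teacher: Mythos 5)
The paper itself contains no proof of Proposition~\ref{unique-ext}: it is quoted from \cite{Cortazar-GarciaHuidobro-Yarur}, so your attempt can only be measured against the argument given there. Your architecture (first separation point $r_*$, case analysis on $(u_*,v_*)$, weighted Gronwall at transversal crossings, energy trapping, contraction at $r=0$) is the right kind of plan, and several cases are handled correctly: at a critical point with $f(u_*)\neq 0$ both $v_i$ stay, with a common sign, in the annulus $c\,|r-r_*|\le|v_i|\le C\,|r-r_*|$, so the weight $(q-1)\,(c\,|r-r_*|)^{q-2}$ is indeed integrable because $\phi_q'$ is an \emph{explicit power}; the trapping argument at a local minimum of $F$ is sound; so is the fixed-point argument at $r=0$ when $f(\lambda)\neq0$. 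The genuine gap is in the simple-zero case. The claim that monotonicity of $f$ on $(b,a)$ makes the effective Lipschitz constant integrable (``monotonicity forces bounded variation, so $\int L(r)\,dr<\infty$'') is not a valid implication. For Gronwall you need a weight $L(r)$, \emph{independent of the two unknown solutions}, majorizing the Lipschitz constant of $f$ on the whole annulus $[c\,|r-r_*|,C\,|r-r_*|]$ swept by the crossing, since a priori you do not know how $u_1(r)$ and $u_2(r)$ sit relative to each other. Bounded variation does not control that sup: take $f(u)=-u-g(u)$ where $g'$ consists of (mollified) spikes of height $2^n$ and width $4^{-n}$ centred at $u_n=2^{-n}$; this $f$ satisfies $(\mathrm H1)$--$(\mathrm H2)$, has bounded variation near $0$, yet the annulus-sup of $|f'|$ is at least $2^n$ on $r$-sets of measure $\sim 2^{-n}$, so $\int L(r)\,dr=\infty$. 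Replacing the sup by the variation of $f$ over the interval between $u_1(r)$ and $u_2(r)$ does not help either: then the factor $|u_1-u_2|$ is lost and the inequality no longer forces the difference to vanish. The standard repair is different and simpler: transversality gives $u'=\phi_q(v)\ge c>0$ near $r_*$, so one may take $u$ as the independent variable and study
\[
\frac{dr}{du}=\frac{1}{\phi_q(v)}\;,\qquad
\frac{dv}{du}=-\,\frac{1}{\phi_q(v)}\left[\frac{N-1}{r}\,v+f(u)\right]\;,
\]
in which $f$ is evaluated at the \emph{independent} variable; the right-hand side is locally Lipschitz in $(r,v)$ uniformly in $u$ and continuous in $u$, and Picard--Lindel\"of applies. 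Note that this needs only continuity of $f$, so $(\mathrm H2)$ is not what rescues this case, contrary to your diagnosis.

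A second gap is that your classification of critical points with $f(u_*)=0$ is incomplete. Assumptions $(\mathrm H1)$--$(\mathrm H2)$ constrain $f$ only on $(b,a)$; outside $[b,a]$ the function $f$ may vanish at a point $u_1$ where $F$ has \emph{neither} a local maximum \emph{nor} a local minimum (for instance $f(u)=|u-u_1|$ near $u_1$, so that $F$ increases through $u_1$ with $F'(u_1)=0$). Your trapping argument fails there, because $F<F(u_1)$ on one side and the energy does not confine the solution; and such points are not among the exclusions of the proposition, so your sentence ``the only remaining critical points with $f(u_*)=0$ are the double zero and the critical points at a local maximum of $F$'' is false under the stated hypotheses. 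This is not a pedantic point: for $p>2$ one has $\int\,(F(u_1)-F(u))^{-1/p}\,du<\infty$ on the side where $F<F(u_1)$ \emph{even for Lipschitz} $f$, and convergence of this integral is exactly the mechanism (cf.~\cite{Vazquez84,Pucci-Serrin-Zou99}) that permits a solution to leave the rest state $(u_1,0)$, i.e.\ the very non-uniqueness the proposition excludes at local maxima of $F$. A complete proof must either treat these inflection-type zeros or observe that the class of nonlinearities considered rules them out (as the hypotheses of \cite{Cortazar-GarciaHuidobro-Yarur} do); as written, your dichotomy silently assumes that every zero of $f$ is a local extremum of $F$.
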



\newcommand{\etalchar}[1]{$^{#1}$}
\providecommand{\bysame}{\leavevmode\hbox to3em{\hrulefill}\thinspace}
\providecommand{\MR}{\relax\ifhmode\unskip\space\fi MR }
\providecommand{\MRhref}[2]{%
  \href{http://www.ams.org/mathscinet-getitem?mr=#1}{#2}
}
\providecommand{\href}[2]{#2}

\end{document}